\documentclass{article}

\PassOptionsToPackage{numbers, compress}{natbib}

\usepackage[preprint]{neurips_2025}
\usepackage{xcolor,colortbl}
\usepackage{longtable}
\usepackage{booktabs}
\usepackage{array}
\usepackage{multirow}
\usepackage{adjustbox}
\usepackage{geometry}
\geometry{margin=1in}

\newcommand{\FullTitle}{Beyond Value Functions: Single-Loop Bilevel Optimization under Flatness Conditions}

    \makeatother

\usepackage{microtype}
\usepackage{graphicx}
\usepackage{subcaption}
\usepackage{booktabs} 
\usepackage{enumitem}
\usepackage{hyperref}
\usepackage{algorithm,algorithmic}
\usepackage{lipsum} 

\usepackage{wrapfig} 


\usepackage{amsmath}
\usepackage{amsthm}
\usepackage{amssymb}
\usepackage{multirow}

\usepackage{mathtools}
\usepackage{pifont} 
\newcommand{\cmark}{\textcolor{teal}{\ding{51}}}
\newcommand{\xmark}{\textcolor{purple}{\ding{56}}}


\usepackage{array} 
\newtheorem{theorem}{Theorem}
\newtheorem{proposition}[theorem]{Proposition}
\newtheorem{lemma}{Lemma}

\newtheorem{example}{Example}

\newtheorem{definition}{Definition}
\newtheorem{assumption}{Assumption}

\newcommand{\Oc}{{\cal O}}
\usepackage[textsize=tiny]{todonotes}

\usepackage{float}

\usepackage{tabularx}

\usepackage{physics}
\usepackage{mathtools}

\usepackage[toc,page,header]{appendix}
\usepackage{minitoc}

\usepackage{marginnote}

\clearpage{}%

\DeclareMathOperator*{\argmin}{argmin}

\newtheorem{observation}{Observation}
\makeatletter
\newcommand{\leqnomode}{\tagsleft@true}
\newcommand{\reqnomode}{\tagsleft@false}

\usepackage{tikz}

\definecolor{ao}{rgb}{0.0, 0.5, 0.0}
\definecolor{LightCyan}{rgb}{0.88,1,1} 
\definecolor{Lightpurple}{rgb}{0.9,0.9,1} 

\title{\FullTitle}

%

\author{%
  Liuyuan Jiang$^{*,\diamond}$, Quan Xiao$^{\dagger,\diamond}$, Lisha Chen$^*$, Tianyi Chen$^{\dagger,\diamond}$\\
  $^\diamond$Rensselaer Polytechnic Institute, Troy, NY\\
$^*$University of Rochester, Rochester, NY\\
$^\dagger$Cornell Tech, Cornell University, New York, NY\\
\texttt{ljiang24@ur.rochester.edu, qx232@cornell.edu}\\
\texttt{lchen102@ece.rochester.edu, tianyi.chen@cornell.edu}
}

\begin{document}

\maketitle
\doparttoc %
\faketableofcontents %
 
\begin{abstract}
Bilevel optimization, a hierarchical optimization paradigm, has gained significant attention in a wide range of practical applications, notably in the fine-tuning of generative models. However, due to the nested problem structure, most existing algorithms require either the Hessian vector calculation or the nested loop updates, which are computationally inefficient in large language model (LLM) fine-tuning. In this paper, building upon the fully first-order penalty-based approach, we propose an efficient value function-free (\textsf{PBGD-Free}) algorithm that eliminates the loop of solving the lower-level problem and admits fully single-loop updates. Inspired by the landscape analysis of representation learning-based LLM fine-tuning problem, we propose a relaxed flatness condition for the upper-level function and prove the convergence of the proposed value-function-free algorithm. We test the performance of the proposed algorithm in various applications and demonstrate its superior computational efficiency over the state-of-the-art bilevel methods.

\end{abstract}

\vspace{-0.2cm}
\section{Introduction}
\vspace{-0.2cm}

Bi-level optimization (BLO) has gained significant attention for its powerful modeling capabilities in hierarchical learning across a wide range of real-world applications, such as meta-learning \citep{franceschi2018bilevel,chen2023_fnt_meta,franceschi2018bilevel}, model pruning \citep{zhang2022advancing,yang2023pruning}, reinforcement learning \citep{zhang2020bi,tan2023bi,shen2024principled,thoma2024contextual}, continual learning \cite{borsos2020coresets,hao2023bilevel}, fine-tuning large language models (LLMs) \citep{shen2024seal,qin2024bidora,pan2024scalebio,lu2024boundary} and diffusion models \citep{xiao2025first,clark2023directly,marionimplicit}. 
In this paper, we consider the BLO problem with $f:\mathbb{R}^{d_x}\times \mathbb{R}^{d_y} \rightarrow \mathbb{R}$ and $g: \mathbb{R}^{d_x}\times \mathbb{R}^{d_y}  \rightarrow \mathbb{R}$ being the upper-level (UL) and lower-level (LL) objectives that are continuously differentiable but not necessarily convex. Since the LL problem may contain multiple solutions in $S_g^*(x)$, we consider the {\em optimistic} BLO formulation which selects the one $y$ that minimizes the UL objective, given by 
\begin{equation}
\min_{x,y} ~  f(x,y)  ~~
\text{s.t.} ~~   y \in S_g^*(x)
:=\arg \min_{y} g(x,y)  .  \label{eq: original problem 1}
\end{equation}
In large-scale machine learning problems, it is critical to use gradient-based approaches to solve the above problem.
One can
perform a direct gradient descent (GD) on the hyper-objective $\phi(x) := \min_{y\in S_g^*(x)} f(x,y)$.
A popular GD-based method is the implicit gradient descent (IGD) method with second-order Hessian evaluation~\citep{ghadimi2018approximation,hong2020two,ji2020provably,chen2021closing,khanduri2021near,li2022fully,sow2022convergence}. 
However, evaluating Hessian or its inverse in IGD is costly. To reduce the computational burden, especially in large-scale problems, first-order gradient-based methods, including the penalty-based BLO methods~\citep{ye2022bome,liu2021value,shen2022single,kwon2023fully,kwon2023penalty,jiang2024primal}, have been developed.
%
For example, penalizing the LL objective optimality gap into the UL via a large penalty constant $\gamma$ has been proposed \citep{shen2023penalty,kwon2023penalty,ye2022bome}, yielding the following objective 
\begin{align}
\min_x F_\gamma(x) := & ~\min_y \tilde{F}_\gamma(x,y):= f(x,y) + \gamma \big(g(x,y) - \min_{z} g(x,z)\big) .  \label{eq:F_gam_function} 
\end{align}
Under a proper curvature assumption for the LL problem, the penalty reformulation proves to be differentiable and smooth \citep{shen2023penalty,kwon2023penalty,jiang2024primal,chen2024finding}, which enables the design of penalty-based gradient descent algorithms (PBGD) \citep{shen2023penalty,kwon2023fully,kwon2023penalty,chen2023bilevel}.  Furthermore, the function value gap $|F_\gamma(x) - \phi(x)| = \mathcal{O}(\gamma^{-1})$ ensures the solution to the reformulated problem is an approximate solution to the original problem.
The reformulation in \eqref{eq:F_gam_function} provides two choices of algorithm update: jointly updating $x$ and $y$ to minimize $\tilde{F}_\gamma(x,y)$ \citep{ye2022bome,shen2023penalty}, or alternatively optimizing $y$ then updating $x$ to minimize the hyper objective $F_\gamma(x)$ \citep{kwon2023penalty,chen2024finding}.
Each has its pros and cons.
For example, joint update eliminates the inner loop of $y$ so that it has low per-iteration cost, but high smoothness constant of $\tilde{F}_\gamma(x,y)$, which increases with $\gamma$, making the convergence rate suboptimal \citep{shen2023penalty,ye2022bome}. In contrast, the smoothness constant of $F_\gamma(x)$ remains ${\cal O}(1)$ since the value function gap $\gamma (g(x,y) - \min_{z} g(x,z))$ remains in ${\cal O}(1)$ when $y$ minimizes $\tilde{F}(x,y)$, but estimating $\nabla F_\gamma (x)$ often requires running inner loops to obtain $y_g^*(x)\in S_g^*(x)$ and $ y_\gamma^*(x)\in S_\gamma^*(x):=\argmin_y \tilde{F}_\gamma (x,y)$ \citep{chen2024finding}.  
Then a natural question is:

\vspace{-0.1cm}
\begin{center}
\textbf{(Q1)} \textit{Can we develop an efficient algorithm that combines the best of both worlds?}
\end{center}
\vspace{-0.1cm}
The idea is to update $x$ by $\nabla_x f(x,y_\gamma^*(x))$ and skip the inner loop estimation for $y_g^*(x)$,
which we term as PBGD Free of value function evaluation (\textsf{PBGD-Free}).
To be more specific, we illustrate the updates for standard PBGD, its variants, and PBGD-Free in Figure \ref{fig:PBGD flow chart}.
%

%

\begin{wrapfigure}{r}{0.65\textwidth}
\centering

\vspace{-0.45cm}
\begin{minipage}[t]{\linewidth}
    \centering
    \includegraphics[width=0.9\textwidth]{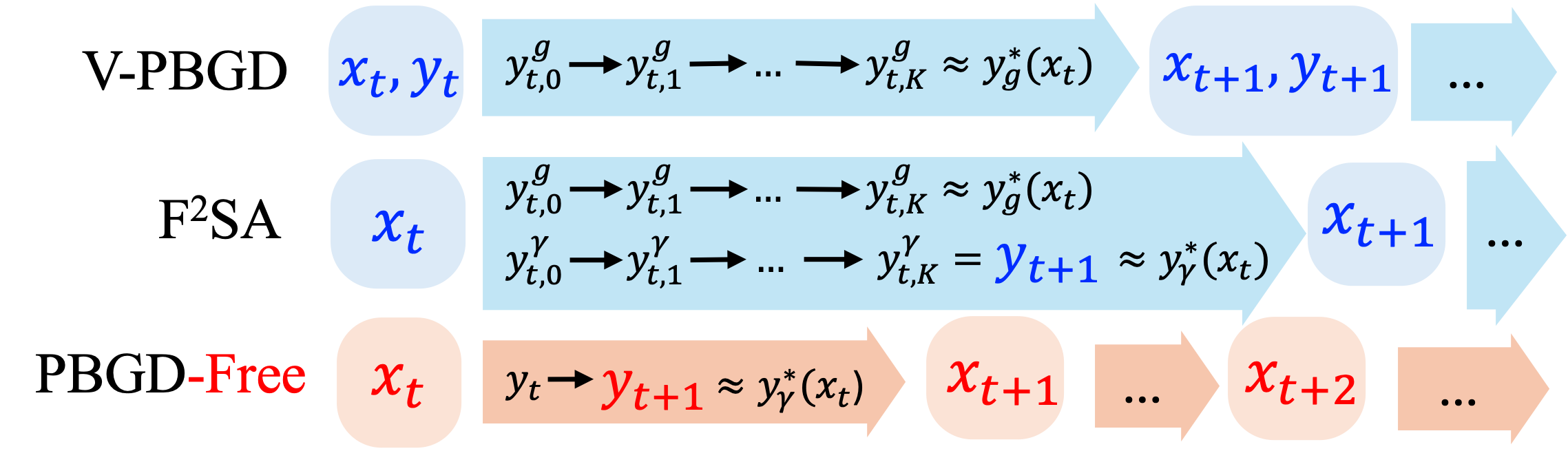}
\end{minipage}
\vspace{-0.6cm}
\caption{ Update schemes for V-PBGD, F$^2$SA and PBGD-Free. V-PBGD \citep{shen2023penalty} (\textbf{top}) and F$^2$SA \citep{kwon2023penalty} (\textbf{middle})  refine the LL variable over multiple steps before updating $x_t$ via $\nabla_x \tilde{F}_\gamma(x_t,y_t)$ for V-PBGD or $\nabla_x \tilde{F}_\gamma(x_t,y_t)$ for F$^2$SA while PBGD-Free (\textbf{bottom}) applies a 1-step inner update to find a more efficient yet potentially less accurate $\nabla_x f(x_t,y_{t+1})\approx\nabla F_\gamma(x_t)$. 
}
\label{fig:PBGD flow chart}
\vspace{0.15cm}
\begin{minipage}[t]{0.3\linewidth}
    \centering
\includegraphics[width=\textwidth]{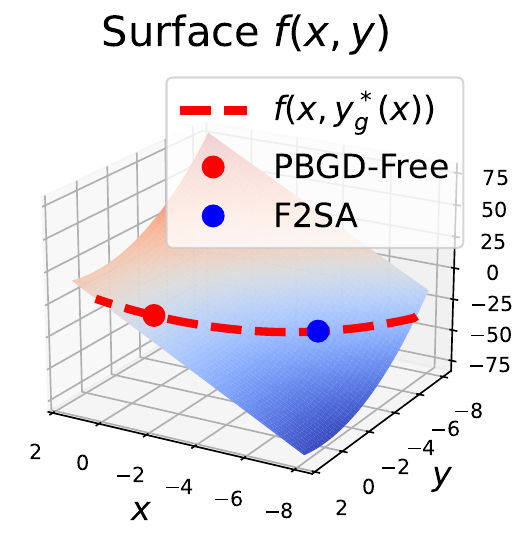}
\end{minipage}
\hfill
\begin{minipage}[t]{0.39\linewidth}
\centering
\includegraphics[width=0.95\linewidth]{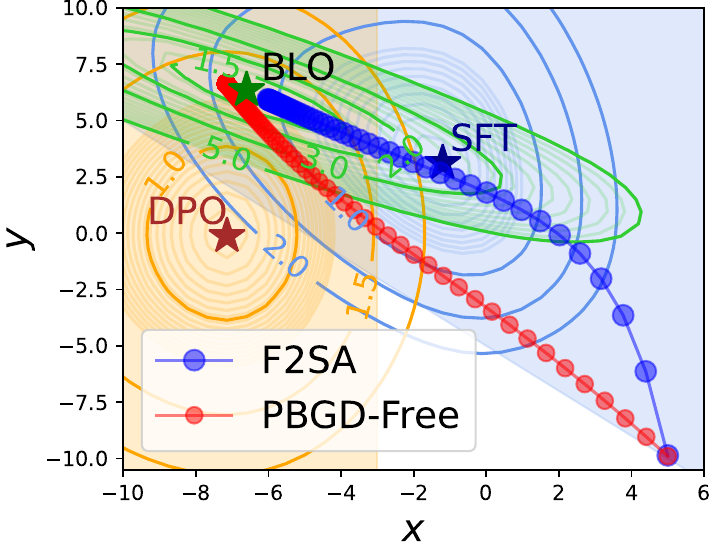}
\end{minipage}
\begin{minipage}[t]{0.29\linewidth}
    \centering
    \includegraphics[width=\linewidth]{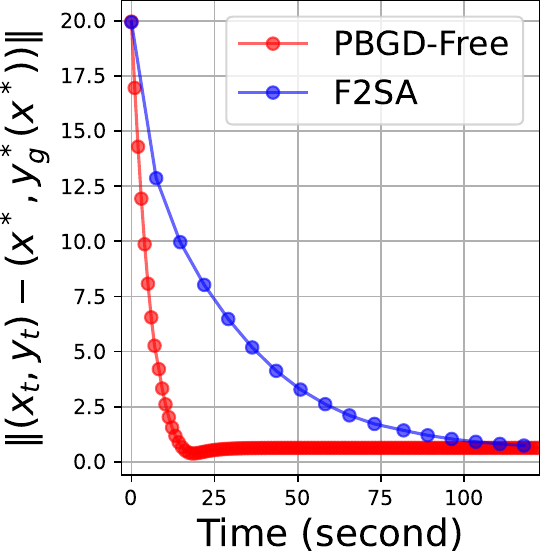}
\end{minipage}
\vspace{-0.65cm}
\caption{\textbf{An Illustration to show PBGD-Free does not work in Example~\ref{example:bias}, but works well in PEFT.
}  
The \textbf{left} plot shows the $f(x,y)$ and $f(x, y^*_g(x))$ in Example~\ref{example:bias}, with \textcolor{red}{red} and \textcolor{blue}{blue} dots as the converged points using PBGD-Free and F$^2$SA method. 
The \textbf{middle} plot shows the trajectory of updates in PEFT. The \textcolor{orange}{orange}, \textcolor{cyan}{cyan}, and \textcolor{green}{green} contours are the landscapes of $f_{\text{DPO}}(x,y)$, $g_{\text{SFT}}(x,y)$, and $\tilde{F}_\gamma (x,y)$, respectively. 
The \textbf{right} plot presents the convergence vs. time in PEFT, showing faster convergence of PBGD-Free.
(See Appendix \ref{app:toy_example} for details.)
}
\label{fig:toy_example_convergence_results}
\vspace{-0.8cm}
\end{wrapfigure}
\textbf{Positive empirical observations on Q1: }
\emph{PBGD-Free largely reduces computation and memory cost while preserving the accuracy in  LLM parameter efficient fine-tuning (PEFT) \citep{ren2023prepare,aghajanyan2021muppet}.}
See Figure~\ref{fig:toy_example_convergence_results} and
experiments in Section \ref{sec: exp}.
We prioritize supervised fine-tuning (SFT) loss at the LL to ensure a capable base LLM model, while we keep direct preference optimization (DPO) loss~\citep{rafailov2023direct} in the UL to keep alignment with human preferences:
\begin{align} 
& \min_{x,y}~ f_{\text{DPO}}(x,y;\mathcal{D}_{\text{DPO}}) \label{eq: bilevel repre LLM}  \\
& \text{s.t.} ~ y \in \arg\min_y g_{\text{SFT}}(x,y;\mathcal{D}_{\text{SFT}}).\nonumber 
\end{align} 
\vspace{-0.1cm}
where $x$ is the pretrained LLM model, and $y$ is an easy-to-fine-tuned head.

\textbf{Negative theoretical observations on Q1: }
\emph{There are some counterexamples where PBGD-Free does not converge.}
{\bf c1)} When the UL objective solely depends on the LL variable $f(x,y)=f(y)$, as in data hyper-cleaning \citep{shaban2019truncated,hong2020two,shen2024seal} and meta-learning \citep{franceschi2018bilevel,chen2023_fnt_meta,franceschi2018bilevel}, the LL penalty gradient term contains all the gradient information about the UL variable $x$, so it cannot be omitted; and, {\bf c2)} When $f(x,y)$  jointly depends on both variables, omission of the penalty gradient term can lead to a different update direction;  
see more details in Example~\ref{example:bias}
and Proposition~\ref{prop: PBGD}.
\begin{example}\label{example:bias}
For the BLO problem in \eqref{eq: original problem 1} with $f(x,y) = x^2 + 10y$ and $g(x,y) = (y - x + 1)^2$, the gradients $\langle\nabla F_\gamma(x),\nabla_x f(x, y_\gamma^*(x))\rangle <0$ exhibit \textbf{opposite directions} for $x \in (-5, 0)$. As a result, $\nabla F_\gamma(x) = 2x + 10$ converges to $x = -5$ while $\nabla_x f(x, y_\gamma^*(x)) = 2x$ converges to $x=0$. 
\end{example}
\vspace{-0.2cm}

These findings leave it unclear when PBGD-Free can be applied without sacrificing accuracy. In this paper, we focus on the case with UL joint dependency, where the objective $f(x,y)$ depends intrinsically on both $x$ and $y$ (i.e., cannot be simplified to $f(y)$). We explore the following question:

\vspace{-0.1cm}
\begin{center}
\textbf{(Q2)} \textit{Can we identify sufficient conditions under which the PBGD-Free algorithm is guaranteed to converge to the stationary solution of the original problem?}
\end{center}
\vspace{-0.2cm}
We give an affirmative answer to the above question. Specifically, \textbf{our key contributions} are summarized as follows.
\begin{enumerate}    
\item[\textbf{C1)}] We propose PBGD-Free, a computationally efficient fully-single-loop, value-function-free, first-order algorithm. 
See a detailed comparison with other algorithms in Table~\ref{tab:table1-comparison}. 
Specifically, compared to V-PBGD, it reduces the memory cost from $O(d_x + 2d_y)$ to $O(d_x + d_y)$, and the per-iteration computational complexity cost 
from $O(K)$ to $O(1)$, where $K$ is the number of inner iterations. Furthermore, we show that empirically, it works in large-scale problems such as PEFT \eqref{eq: bilevel repre LLM}. But theoretically, under a \textit{Lipschitz condition} on the UL objective, PBGD-Free only converges to an ${\Theta}(1)$-neighborhood of a stationary point.

\item[\textbf{C2)}] We then introduce a Hölder-alike condition to describe the flatness of $f(x,\cdot)$ (see Definition~\ref{def: flatness}), which relaxes the standard $l_{f,0}$-Lipschitz continuity assumption when $l_{f,0}$ is small. This condition allows us to establish an improved complexity of the PBGD-Free algorithm in ${\cal O}(\epsilon^{-1})$ (Theorem~\ref{thm: no value algorithm convergence}) to a necessary stationary condition of the original problem. 

\item[\textbf{C3)}] We validate our methods through applications to LLM with PEFT and bilevel low-rank adaptation. Across all experiments, PBGD-Free demonstrates much better efficiency and comparable or better accuracy than the state-of-the-art baselines. 
See Section~\ref{sec: exp}.
\end{enumerate}

\begin{table}[t]
\small
\centering
\renewcommand{\arraystretch}{1.2}
\setlength{\tabcolsep}{6pt}
\vspace{-0.3cm}
\begin{tabular}{>{\columncolor{gray!10}}c|>{\columncolor{Lightpurple!70}}c|c|c|c|c|c}
\hline\hline
\textbf{Property} & \textbf{PBGD-Free} & V-PBGD  & BOME  & F$^2$SA-MA & F$^2$SA & BVFSM  \\
\hline    
$f(x,\cdot)$  & Flat & Lip & Lip \& B & Lip & Lip & Diff\\
\hline    
$g(x,\cdot)$  & PL & PL & PL \& B & PL & PL & Diff\\
\hline    
$f(x,y) + \gamma g(x,y)$ & PL & / & / & PL & PL & / \\
\hline
Single-loop & \cmark & \xmark & \xmark & \cmark & \xmark & \xmark \\
\hline
Memory cost
& $d_x + d_y$
& $d_x + 2d_y$ & $3d_x + 4d_y$ 
& $d_x + 5d_y$ & $d_x + 2d_y$ & $d_x + 2 d_y$ \\
\hline
Complexity & ${\cal O}(\epsilon^{-1})$ & $\tilde{\cal O}(\epsilon^{-1.5})$ & $\tilde{\cal O}(\epsilon^{-4})$ & ${\cal O}(\epsilon^{-1.5})$ & $\tilde{\cal O}(\epsilon^{-1})$ & Asym \\
\hline\hline
\end{tabular}
\vspace{0.2cm}
\caption{Comparison of the proposed method (PBGD-Free) with the existing first-order approaches for BLO with nonconvex LL problem (PBGD \citep{shen2023penalty}, BOME \citep{ye2022bome}, F$^2$SA-MA \citep{kwon2023penalty} (with momentum assistance), F$^2$SA \citep{chen2024finding} and BVFSM \citep{liu2023value}) in deterministic setting. The notation $\tilde{\cal O}$ hides dependency on $\log(\epsilon^{-1})$ terms. `Flat', `Lip', `B', `Diff', and `Asym' stand for `flatness condition' in Def. \ref{def: flatness}, `Lipschitz continuous', `bounded', `differentiable', and `asymptotic convergence'. 
}
\label{tab:table1-comparison}
\vspace{-0.8cm}
\end{table}

\vspace{-0.3cm}
\subsection{Prior art}
\vspace{-0.2cm}

\noindent\textbf{Second-order BLO methods.} 
The convergence for IGD-based BLO approaches was firstly established for the unconstrained strongly-convex LL problem \citep{ghadimi2018approximation}, with later literature focused on improving finite time convergence rate \citep{ghadimi2018approximation,hong2020two,ji2021bilevel,chen2022stable,chen2021closing,khanduri2021near,li2022fully,sow2022convergence,chen2022fast,yang2021provably,ji2020provably}. Another branch of methods is based on iterative differentiation (ITD) methods \citep{maclaurin2015gradient,franceschi2017forward,nichol2018firstorder,shaban2019truncated,bolte2022automatic}, but they generally lack finite-time guarantee under stochastic setting \citep{grazzi2020iteration,ji2022will}. However, convergence analysis for both ITD and IGD methods mentioned above is limited to the setting where the LL problem is strongly convex over $y$. This assumption does not align with large-scale machine learning applications, where the LL objective represents the loss of a neural network and is inherently nonconvex \citep{vicol2022implicit,karimi2016linear}. Recent studies have generalized the IGD and ITD methods to address BLO with convex \citep{sow2022constrained,liu2023averaged} or even nonconvex LL problem \citep{xiao2022alternating,arbel2022nonconvex,liu2021investigating,liu2021value,petrulionyte2024functional}. Nevertheless, both ITD and IGD require the computation of second-order information, making them inefficient for large-scale machine learning problems.

\noindent\textbf{First-order BLO methods.} Fully first-order bilevel methods based on equilibrium backpropagation \citep{scellier2017equilibrium,scellier2017equilibrium,zucchet2022beyond} and penalty reformulation \citep{shen2023penalty,kwon2023fully,kwon2023penalty,ye2022bome,chen2024finding,ye2023difference} have become increasingly popular due to their computational efficiency and the ability to handle nonconvex LL problems. Later, penalty approaches have been generalized to address BLO with constrained LL problem \citep{yao2024constrained,jiang2024primal} and distributed learning settings \citep{yang2025first,wang2024fully}. However, the iteration complexity of fully first-order approaches remains suboptimal, exhibiting a logarithmic dependency due to the inner-loop overhead. To reduce the cost in inner loops for both $y_g^*(x)$ and $y_\gamma^*(x)$, PBGD \citep{shen2023penalty} eliminated the inner loop for $y_\gamma^*(x)$ by jointly optimizing $x$ and $y_\gamma$ in \eqref{eq:F_gam_function}, F$^2$SA \cite{kwon2023penalty} managed to be fully single-loop using momentum and warm-start techniques. However, both methods incur a suboptimal iteration complexity of ${\cal O}(\epsilon^{-1.5})$. \cite{chen2024finding} further improved iteration complexity of double-loop version of F$^2$SA by exploiting the fact that $F_\gamma(x)$ is $\mathcal{O}(1)$-Lipschitz smooth, but its inner loop leads to a high per-iteration computational cost and suboptimal convergence rate as ${\cal O}(\epsilon^{-1}\log(\epsilon^{-1}))$. 

\noindent\textbf{Landscape-aware optimization.} Landscape-aware optimization leverages structural properties of objective functions into algorithm design to accelerate the convergence or improve the generalization. Newton-type methods, which use second-order curvature information to rescale gradients, have been utilized in BLO \citep{fang2025qnbo,ramzi2021shine,dong2025efficient} for efficient Hessian-vector calculation in IGD-based BLO methods. Sharpness-aware minimization \citep{foretsharpness}, which seeks solutions robust to local perturbations and promotes convergence to flat minima, has also been incorporated into BLO \citep{abbas2022sharp}  for improved generalization. Other landscape conditions in single-level optimization, such as relaxed smoothness \citep{zhanggradient,li2023convergence} and Hessian spectrum \citep{zhang2024transformers,ghorbani2019investigation}, are key to explaining the theoretical benefits of empirically effective algorithms like gradient clipping and Adam \citep{kingma2014adam}. However, most existing works focus on second-order BLO algorithms, and none have explored BLO tailored landscape conditions.

\section{Value Function Free Algorithm for BLO Problems}

In this section, we introduce the value function free algorithm for bilevel problems and show that it does not always converge under the traditional Lipschitz condition. 

\subsection{Preliminary: the Lipschitzness condition and the penalty-based reformulation}

We begin by introducing the standard Lipschitz condition on the UL objective $f(x,y)$, which is common in BLO analysis. 

\begin{assumption}
\label{ass: UL Lipschitz}
Assume that for all $x$, the UL objective $f(x,\cdot)$ is $l_{f,0}$-Lipschitz in $y$ at $y_g^*(x)$ with
some $l_{f,0} > 0$, i.e., 
\vspace{-0.3cm}
\begin{align}
|f(x,y_g^*(x))-f(x,y)|\leq l_{f,0}\|y-y_g^*(x)\|. \label{eq: lipschitzness}
\end{align}
\end{assumption}
\vspace{-0.1cm}
For differentiable $f$, Assumption \ref{ass: UL Lipschitz} implies $\|\nabla_y f(x,y_g^*(x))\|\leq l_{f,0}$. This assumption is crucial for the key results in BLO literature; e.g., \citep{shen2023penalty,kwon2023penalty,chen2024finding,chen2021closing,ji2020provably,hong2020two,ji2021bilevel,ye2022bome}. 
Together with the following standard assumption, it enables a good approximation of $F_\gamma(x)$ to the original problem.

\begin{assumption}
\label{assumption: standard assumption}
Suppose that i) $f$ and $g$ are respectively $l_{f,1}$ and $l_{g,1}$-smooth; ii) $\nabla^2 f$ and $\nabla^2 g$ are respectively $l_{f,2}$ and $l_{g,2}$-Lipschitz continuous; and, iii) there exists a finite $\gamma^*>0$ such that $cf(x,y)+g(x,y)$ is $\mu$-Polyak-Łojasiewicz (PL) in $y$ for all $c\in [0,1/{\gamma^*}]$ for some $\mu> 0$. 
\end{assumption}
We provide the definition of smoothness and PL in Appendix \ref{appendix: Preliminary Knowledge}. Here, the smoothness condition is standard \citep{ghadimi2018approximation,hong2020two,kwon2023fully,ji2021bilevel,chen2021closing,jiang2024primal}. The Hessian Lipschitzness helps establish the smoothness of $F_\gamma(x)$ with constant nonincreasing with $\gamma$ and is also conventional \cite{kwon2023penalty,chen2024finding,dagreou2022framework}. PL condition is weaker than the strong convexity assumption \citep{chen2021closing,hong2020two,ghadimi2018approximation,ji2021bilevel,dagreou2022framework} and is conventional in the first-order BLO literature \cite{kwon2023penalty,shen2023penalty,chen2024finding}. 
Under these, the penalty objective is differentiable  \citep{shen2023penalty,kwon2023penalty,chen2024finding,nichol2018firstorder} and
\begin{align}
&~~~~\nabla F_\gamma(x) = \nabla_x f(x, y_\gamma^*(x)) + \gamma \left( \nabla_x g(x, y_\gamma^*(x)) - \nabla_x g(x, y_g^*(x)) \right)  \label{eq: nabla F gam}
\end{align}
with $\forall y_g^*(x)\in S_g^*(x)$ and $\forall y_\gamma^*(x) \in S_\gamma^*(x):= \argmin_y F_\gamma (x,y)$. 
Moreover, the following proposition shows that $F_\gamma (x)$ is a good approximation of the original objective $\phi(x)$.

\begin{proposition}[\textbf{Approximation error} \cite{shen2023penalty,kwon2023penalty}]
\label{lemma: distance of yg ygam}
Under Assumptions~\ref{ass: UL Lipschitz}--\ref{assumption: standard assumption}, for any $x$, we have
\begin{align}
\|F_\gamma(x)-\phi(x)\|\leq & \Oc(l_{f,0}^2 \mu^{-1}\gamma^{-1}),\quad \text{and}\\
\|\nabla \phi(x)-\nabla F_\gamma(x)\|= & \Oc(\|y_g^*(x)-y_\gamma^*(x) \|) \leq   \Oc(l_{f,0}\mu^{-1}\gamma^{-1}). \nonumber
\end{align}
for some $y_g^*(x)$, $y_\gamma^*(x)$ defined in \eqref{eq: nabla F gam}. Moreover, the bound for $\|y_g^*(x)-y_\gamma^*(x) \|$ is tight. 
\end{proposition}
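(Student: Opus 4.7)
The plan is to deduce everything from a single ``argmin closeness'' bound $\|y_g^*(x) - y_\gamma^*(x)\| = \mathcal{O}(l_{f,0}/(\mu\gamma))$; the function-value gap follows by a sandwich, and the gradient gap follows from the envelope formula~\eqref{eq: nabla F gam}. For the argmin bound, note that for $\gamma \ge \gamma^*$ we have $c = 1/\gamma \in [0,1/\gamma^*]$, so Assumption~\ref{assumption: standard assumption}(iii) gives that $\tfrac{1}{\gamma}f(x,\cdot) + g(x,\cdot)$ is $\mu$-PL and equivalently $\tilde F_\gamma(x,\cdot)$ is $\gamma\mu$-PL. I would apply the PL inequality of $\tfrac{1}{\gamma}f + g$ at $y_g^*(x)$: since $\nabla_y g(x, y_g^*(x)) = 0$, its gradient there reduces to $\tfrac{1}{\gamma}\nabla_y f(x, y_g^*(x))$, whose norm is at most $l_{f,0}/\gamma$ by Assumption~\ref{ass: UL Lipschitz}. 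This yields $\tilde F_\gamma(x,y_g^*(x)) - \tilde F_\gamma(x,y_\gamma^*(x)) \le l_{f,0}^2/(2\mu\gamma)$, and converting this function-value gap via the quadratic growth implied by $\gamma\mu$-PL on $\tilde F_\gamma(x,\cdot)$ gives $\|y_g^*(x) - y_\gamma^*(x)\|^2 = \mathcal{O}(l_{f,0}^2/(\mu\gamma)^2)$, where we fix $y_g^*(x)$ to be the projection of $y_\gamma^*(x)$ onto $S_g^*(x)$.

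The function-value gap then sandwiches. The upper direction $F_\gamma(x) \le \phi(x)$ is immediate by plugging any $\hat y \in \arg\min_{y \in S_g^*(x)} f(x,y)$ into $\tilde F_\gamma(x,\cdot)$: the penalty vanishes since $g(x,\hat y) = g^*(x)$. For the lower direction, nonnegativity of the penalty gives $F_\gamma(x) \ge f(x, y_\gamma^*(x))$; with $y_g^*(x)$ as above, Lipschitzness yields $f(x, y_\gamma^*(x)) \ge f(x, y_g^*(x)) - l_{f,0}\|y_\gamma^*(x) - y_g^*(x)\|$, and $f(x, y_g^*(x)) \ge \phi(x)$ combined with the argmin rate closes the loop at $|F_\gamma(x) - \phi(x)| = \mathcal{O}(l_{f,0}^2/(\mu\gamma))$. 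A cleaner variant keeps $d = \|y_\gamma^*(x) - y_g^*(x)\|$ free, uses quadratic growth of $g$ to lower-bound $\gamma(g(x,y_\gamma^*(x)) - g^*(x)) \gtrsim \gamma\mu d^2/2$, and minimizes $-l_{f,0}d + \gamma\mu d^2/2$ over $d \ge 0$ to obtain the clean constant $l_{f,0}^2/(2\mu\gamma)$.

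For the gradient bound, I would use \eqref{eq: nabla F gam} for $\nabla F_\gamma(x)$ and the PL-adapted characterization of $\nabla\phi(x)$ from \cite{shen2023penalty,kwon2023penalty}, and decompose
\begin{align*}
\nabla F_\gamma(x) - \nabla\phi(x)
&= \bigl[\nabla_x f(x,y_\gamma^*) - \nabla_x f(x,y_g^*)\bigr] \\
&\quad + \bigl\{\gamma[\nabla_x g(x,y_\gamma^*) - \nabla_x g(x,y_g^*)] - [\nabla\phi(x) - \nabla_x f(x,y_g^*)]\bigr\}.
\end{align*}
The first bracket is $\mathcal{O}(l_{f,1}\|y_\gamma^* - y_g^*\|)$ by smoothness of $\nabla_x f$. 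For the second, I would Taylor-expand $\nabla_x g$ and $\nabla_y g$ around $y_g^*$ via Hessian Lipschitzness (Assumption~\ref{assumption: standard assumption}(ii)) and use the stationarity $\gamma\nabla_y g(x,y_\gamma^*) = -\nabla_y f(x,y_\gamma^*)$ to identify $\gamma(y_\gamma^* - y_g^*)$ with the implicit-gradient correction term in $\nabla\phi(x)$, leaving only a second-order residual $\mathcal{O}(l_{g,2}\gamma\|y_\gamma^* - y_g^*\|^2)$, which upon plugging in the argmin rate is itself $\mathcal{O}(\|y_\gamma^* - y_g^*\|)$. Tightness follows from a one-dimensional example such as $f(x,y) = l_{f,0}\, y$, $g(x,y) = \tfrac{\mu}{2}y^2$: then $y_g^*(x) = 0$ and $y_\gamma^*(x) = -l_{f,0}/(\mu\gamma)$, matching the upper bound exactly.

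The main obstacle is precisely the gradient bound under PL-only curvature: $\nabla_{yy}^2 g$ may be singular, so the classical implicit-function derivation of $\nabla\phi$ does not apply verbatim, and one must adopt a pseudoinverse-based or PL-adapted reference formula for $\nabla\phi(x)$. The cleanest route is to take this formula from \cite{shen2023penalty,kwon2023penalty} and only verify here that every residual in the decomposition scales like $\|y_\gamma^*(x) - y_g^*(x)\|$.
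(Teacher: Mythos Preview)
Your approach is correct and essentially matches the standard argument; the paper in fact cites this proposition rather than reproving it. In particular, your ``cleaner variant'' for the function-value gap---bounding $\phi(x)-F_\gamma(x)\le l_{f,0}d-\tfrac{\gamma\mu}{2}d^2$ via quadratic growth of $g$ and then maximizing over $d$---is exactly the $\alpha=1$, $\delta=0$ specialization of the paper's own proof of Lemma~\ref{lemma: tighter bound for y gam yg}.

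One minor slip: after applying quadratic growth to $\tilde F_\gamma(x,\cdot)$ at $y_g^*(x)$ you obtain $d_{S_\gamma^*(x)}(y_g^*(x))$, so the pair you actually exhibit is ``$y_g^*(x)$ arbitrary, $y_\gamma^*(x)$ its projection onto $S_\gamma^*(x)$'', not the reverse as you wrote. This is harmless here since the statement only asks for \emph{some} pair, but the paper's Lemma~\ref{lemma: tighter bound for y gam yg} carefully derives both directions $d_{S_\gamma^*(x)}(y_g)$ and $d_{S_g^*(x)}(y_\gamma)$ separately, and you would need the latter for the sandwich argument to control $f(x,y_\gamma^*)-f(x,y_g^*)$ with the specific $y_g^*$ being the projection of $y_\gamma^*$.

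Your diagnosis of the gradient bound is accurate: under PL the lower-level Hessian may be singular, and the paper (in the proof of Lemma~\ref{lemma: Lipschitz delta x}) indeed relies on the pseudoinverse-based envelope formula from \cite{chen2024finding} together with the perturbation bound of \cite{wedin1973perturbation}, rather than the Taylor-plus-stationarity route you sketch. Your tightness example is fine.
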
 

\begin{algorithm}[t]
\caption{PBGD Free from value function (PBGD-Free) algorithm}
\label{alg: PBGD-Free}
\begin{algorithmic}[1]
\STATE \textbf{inputs:} initial point $x_0,y_0$; step size $\eta,\eta^\gamma$; counters $T, K$  ~~~~~~~~~~ $\triangleright${$K=1$ as common choice}
\FOR{$t = 0, 1, \ldots, T-1$}
\FOR{$k = 0, 1, \ldots, K-1$} 
\STATE update $y_{t,k+1}^\gamma = y_{t,k}^\gamma - \eta^\gamma(\gamma^{-1} \nabla_y f(x_t, y_{t,k}^\gamma) + \nabla_y g(x_t, y_{t,k}^\gamma)) $ ~~~~~~~~~~$\triangleright${ set $y_{t,0}^\gamma=y_{t-1}^\gamma$}
\ENDFOR 
\STATE update $x_{t+1} = x_t -\eta g_t $ where $g_t = \nabla_x f(x,y_t^\gamma)$~~~~~~~~~~~~~~~~~~~~~~~~~~~~~~~~~~~~~~$\triangleright${ set $y_{t}^\gamma=y_{K}^\gamma$} 
\ENDFOR
\STATE \textbf{outputs:} $(x_T, y_{T}^g)$
\end{algorithmic}
\end{algorithm}

Therefore, the PBGD type of algorithms ~\citep{chen2024finding,kwon2023fully,kwon2023penalty,ye2022bome} proceed by approximating $y_t^\gamma \approx y_\gamma^*(x_t)$ and $y_t^g \approx y_g^*(x_t)$ via gradient descent  
and updating $x$ via
\begin{align}
    x_{t+1} = x_t - \eta g_t \quad \text{where} \quad g_t = \nabla_x f(x, y_t^\gamma) + \gamma \left( \nabla_x g(x, y_t^\gamma) - \nabla_x g(x, y_t^g) \right). \label{eq: x update}
\end{align}

\subsection{Negative theoretical results of the PBGD-Free under Lipschitz condition}

Although PBGD-type algorithms can achieve the state-of-the-art complexity $\Oc(\epsilon^{-1} \log(\epsilon^{-1}))$ in \citep{chen2024finding}, their reliance on two inner loops can become computationally expensive for large-scale problems. While the overhead is manageable in small-scale settings, it may pose practical challenges as the model size grows.
Nevertheless, empirical evidence in Figure \ref{fig:toy_example_convergence_results} and real-world applications in Section \ref{sec: exp} illustrate that it sometimes gives satisfactory results even if it directly updates $x_{t+1}$ and $y_{t,k}^\gamma$ by 
\begin{align}
x_{t+1}=x_t-\eta \nabla_x f(x_t,y_t^\gamma) \label{eq: x update PBGD-Free}
~~~\text{and}~~~
y_{t,k+1}^\gamma = y_{t,k}^\gamma - \eta^\gamma(\gamma^{-1} \nabla_y f(x_t, y_{t,k}^\gamma) + \nabla_y g(x_t, y_{t,k}^\gamma))
\end{align}
which we name as PBGD-Free algorithm and is summarized in Algorithm \ref{alg: PBGD-Free}. 

Although PBGD-Free is computationally efficient by eliminating the inner loop estimates of $y_g^*(x)$, the removal of the value function part $b(x_t):=\gamma (\nabla_x g(x,y_\gamma^*(x))-\nabla_x g(x,y_g^*(x)))$ in PBGD-Free introduces a non-negligible bias shown in Example \ref{example:bias}. To see this, by Taylor's expansion, the omitted value function part $b(x_t)$ is in the order of 
\begin{align}
    & \|b(x_t)\| = \gamma \|\nabla_{xy}g(x,y_g^*(x))\big(y_\gamma^*(x) -y_g^*(x) \big) \| + {\cal O}(\gamma\|y_g^*(x) -y_\gamma^*(x) \|^2). \label{bias_value}
\end{align}
Here, the second term ${\cal O}(\gamma\|y_g^*(x) -y_\gamma^*(x) \|^2=\Oc(l_{f,0}^2 \gamma^{-1})$ can be small enough with enlarging $\gamma$ following Proposition \ref{lemma: distance of yg ygam}. For general settings where $\nabla_{xy}g(x,y_g^*(x))\neq 0$, due to the first term and according to Proposition \ref{lemma: distance of yg ygam}, the bias in \eqref{bias_value} is tight as  $\Omega(1)$. 
Therefore, in the general case where $\nabla_{xy}g(x,y_g^*(x))\neq 0$, the PBGD-Free algorithm only drives the iterates to a neighborhood of the stationary point, which we will formally quantify as follows.

\begin{proposition}[Lower bound on asymptotic error]
\label{prop: PBGD}
Under Assumptions~\ref{ass: UL Lipschitz} and~\ref{assumption: standard assumption}, there exists a BLO problem where the iterates generated by PBGD-Free (Algorithm~\ref{alg: PBGD-Free}) converge to a neighborhood of a stationary point with a non-vanishing residual even when choosing step sizes appropriately, i.e., 
\begin{equation}
\lim_{T\rightarrow \infty}\frac{1}{T}\sum_{t=0}^{T-1} \|\nabla F_\gamma(x_t)\|^2 = \lim_{T\rightarrow \infty}\Theta\Bigg(\frac{1}{T}\sum_{t=0}^{T-1} \|  \nabla_y f(x_t,y_g^*(x_t))\|^2\Bigg)=\Theta\left(l_{f,0}^2\right).
\end{equation}
\end{proposition}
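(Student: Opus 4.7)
The plan is to prove this tightness result by constructing an explicit family of BLO instances indexed by $l_{f,0}$, namely a parametrized version of Example~\ref{example:bias}:
\begin{align*}
f(x,y)=x^2+l_{f,0}\, y,\qquad g(x,y)=(y-x+1)^2.
\end{align*}
First I would verify that this instance satisfies both Assumption~\ref{ass: UL Lipschitz} and Assumption~\ref{assumption: standard assumption}: the LL objective $g(x,\cdot)$ is $2$-strongly convex (hence PL with $\mu=2$), both $f$ and $g$ are polynomials of degree at most two (so smoothness and Hessian-Lipschitzness hold trivially with $l_{f,2}=l_{g,2}=0$), and $|\nabla_y f|\equiv l_{f,0}$, confirming the Lipschitz condition on the UL.

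Next, I would compute the relevant closed forms. From $\nabla_y g=0$ one gets $y_g^*(x)=x-1$, and from $\nabla_y(\gamma^{-1}f+g)=0$ one gets $y_\gamma^*(x)=x-1-l_{f,0}/(2\gamma)$. Substituting into the envelope formula \eqref{eq: nabla F gam} yields
\begin{align*}
\nabla F_\gamma(x)=2x+l_{f,0},\qquad \nabla_x f(x,y_\gamma^*(x))=2x,
\end{align*}
so the PBGD-Free surrogate direction and the true penalty gradient differ by a constant $l_{f,0}$, matching the $\Omega(1)$ bias predicted by \eqref{bias_value} since $\nabla_{xy}g(x,y_g^*(x))=-2\ne 0$. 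Similarly, $\nabla_y f(x,y_g^*(x))\equiv l_{f,0}$.

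The third step is to show the PBGD-Free iterates actually reach the biased fixed point $x=0$ and that the $y$-iterates track $y_\gamma^*(x_t)$. Since $\gamma^{-1}f(x_t,\cdot)+g(x_t,\cdot)$ is a strongly convex quadratic in $y$ with Hessian $2$, the inner update in Line~4 of Algorithm~\ref{alg: PBGD-Free} is an affine contraction toward $y_\gamma^*(x_t)$ with factor $|1-2\eta^\gamma|<1$ for any $\eta^\gamma\in(0,1)$. Combined with warm-starting $y_{t,0}^\gamma=y_{t-1}^\gamma$, this gives a tracking error $\delta_t:=\|y_t^\gamma-y_\gamma^*(x_t)\|$ governed by a linear recursion, and the outer recursion reduces to $x_{t+1}=x_t-\eta(2x_t+\mathcal{O}(\delta_t))$. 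For sufficiently small $\eta$ this joint linear system is contractive around $(x,\delta)=(0,0)$, so $x_t\to 0$ and $\delta_t\to 0$. Plugging $x_t\to 0$ into the closed forms yields
\begin{align*}
\lim_{t\to\infty}\|\nabla F_\gamma(x_t)\|^2=l_{f,0}^2\quad\text{and}\quad \lim_{t\to\infty}\|\nabla_y f(x_t,y_g^*(x_t))\|^2=l_{f,0}^2,
\end{align*}
and Cesàro averaging converts these pointwise limits into the two equalities in the proposition.

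The main obstacle I anticipate is the two-timescale interaction between the inner contraction on $y^\gamma$ and the outer update on $x$, since with warm-starting $y_{t,0}^\gamma\ne y_\gamma^*(x_t)$ while $x_t$ is still moving. I would sidestep this by exploiting the affine structure of the example: both updates are linear, so the joint recursion on $(x_t,y_t^\gamma)$ has an explicit closed-form $2\times 2$ transition matrix whose spectrum can be checked directly to be strictly contractive for step sizes $\eta,\eta^\gamma$ in an explicit range. This avoids any delicate perturbation analysis and gives the stated $\Theta(l_{f,0}^2)$ residual cleanly, confirming that the upper bound on the bias in \eqref{bias_value} is asymptotically tight.
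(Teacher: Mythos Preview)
Your proposal is correct and uses the same counterexample $f(x,y)=x^2+l_{f,0}y$, $g(x,y)=(y-x+1)^2$ as the paper, with the same closed-form computations $\nabla F_\gamma(x)=2x+l_{f,0}$ and $\nabla_x f(x,y_\gamma^*(x))=2x$. Two minor differences are worth noting. First, the paper precedes the counterexample with a \emph{general} upper-bound argument (valid for any problem satisfying the assumptions) showing $\frac{1}{T}\sum_t\|\nabla F_\gamma(x_t)\|^2\leq \mathcal{O}(T^{-1})+\mathcal{O}(l_{f,0}^2)$; you instead extract both sides of the $\Theta$ directly from the explicit limits on the single instance, which is sufficient for the proposition as stated but yields less. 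Second, your two-timescale analysis is more careful than necessary here: since $\nabla_x f(x,y)=2x$ does not depend on $y$, the outer recursion is exactly $x_{t+1}=(1-2\eta)x_t$ with no $\mathcal{O}(\delta_t)$ coupling, so $x_t\to 0$ is immediate without any joint spectral analysis (the paper exploits this decoupling implicitly).
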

The proof of Proposition \ref{prop: PBGD} is available at Appendix \ref{appendix: proof of F2SA PBGD-Free gap}.
Proposition \ref{prop: PBGD} illustrates that
PBGD-Free converges to the $\epsilon$ stationary point only when the bound for $\| \nabla_y f(x, y_g^*(x_t)) \|$ (a.k.a $\ell_{f,0}$) is small. However, this is difficult to guarantee even in scenarios where PBGD‑Free is effective, such as in representation learning based PEFT \eqref{eq: bilevel repre LLM}. This motivates us to explore a weaker condition than the small Lipschitz assumption on  $f(x,\cdot)$, one that is more likely to hold in practice.

\section{Theoretical Analysis under the \texorpdfstring{$(\delta,\alpha)$}{(delta,alpha)}-Flatness Condition}
In this section, we will introduce a new relaxed condition to replace the widely used Lipschitz condition of the UL objective, discuss its use cases, and establish the convergence rate of the PBGD-Free algorithm under this condition. 

\subsection{A relaxed condition: UL \texorpdfstring{$(\delta,\alpha)$}{(delta,alpha)}-flatness and its validation on PEFT problem \texorpdfstring{\eqref{eq: bilevel repre LLM}}{}}
\label{sec: improved analysis under flatness cond}

We first introduce a relaxed condition that is less restrictive, and therefore more general, than the conventional uniform Lipschitz assumption on $f(x,\cdot)$. 

%

\begin{definition}[{$(\delta,\alpha)$-flatness}]
\label{def: flatness}
Function $f(x,\cdot):\mathcal{Y}\rightarrow \mathbb{R}$ is called $(\delta,\alpha)$-flat with modulus $c\geq 0$ at $y_g^*(x) \in S_g^*(x)$ with $\delta\geq 0 , \alpha \geq 1$ if $|f(x,y_g^*(x))-f(x,y)| \leq c \|y_g^*(x) -y \|^\alpha+\delta$  holds for all $y$. 
\end{definition}

When $\delta = 0$ and $y_g^*(x)$ is replaced by an arbitrary $y'$, Definition~\ref{def: flatness} reduces to the standard Hölder condition. Under Assumption~\ref{ass: UL Lipschitz}, the function $f(x,\cdot)$ satisfies $(0,1)$-flatness with modulus $c = l_{f,0}$. However, setting $\delta = 0$ naturally imposes the constraint $\alpha \leq 1$ whenever $\nabla_y f(x, y_g^*(x)) \neq 0$. Unless otherwise specified, we assume $\delta>0$ and $\alpha >1$ when referring to flatness in the following.

We then discuss the relations of Lipschitz condition in Assumption \ref{ass: UL Lipschitz} and the new flatness condition in Definition \ref{def: flatness} through several observations. 

\begin{observation}
\label{obs:small_grad_gives_flat}
Under the $l_{f,1}$-smoothness condition of $f(x, \cdot)$, if $\| \nabla_y f(x, y_g^*(x))\| = \delta^{\frac{1}{\alpha}}$, then $f(x,\cdot)$ is $(\delta, \alpha)$-flat with some modulus $0\leq c \leq \Oc(l_{f,1})$. 
\end{observation}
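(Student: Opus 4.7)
The plan is to combine one step of the descent lemma with Young's inequality. Let $r := \|y - y_g^*(x)\|$. By the $l_{f,1}$-smoothness of $f(x,\cdot)$ and the standard descent lemma,
\begin{equation*}
|f(x,y) - f(x,y_g^*(x))| \;\leq\; \|\nabla_y f(x,y_g^*(x))\|\, r + \frac{l_{f,1}}{2}r^2 \;=\; \delta^{1/\alpha}\, r + \frac{l_{f,1}}{2}r^2,
\end{equation*}
where the last equality invokes the hypothesis $\|\nabla_y f(x,y_g^*(x))\| = \delta^{1/\alpha}$. The remaining task is to dominate the right-hand side by $c\, r^\alpha + \delta$ with $c = \Oc(l_{f,1})$.

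Next, I would absorb the linear term via Young's inequality so that part of it becomes $\delta$ and part of it becomes $r^\alpha$. In the canonical case $\alpha = 2$, the arithmetic-geometric inequality $\delta^{1/2}\, r \leq \delta/2 + r^2/2$ is enough, yielding
\begin{equation*}
|f(x,y) - f(x,y_g^*(x))| \;\leq\; \frac{\delta}{2} + \frac{1 + l_{f,1}}{2}\, r^2 \;\leq\; \delta + c\, r^2,
\end{equation*}
with $c = (1 + l_{f,1})/2 = \Oc(l_{f,1})$, establishing $(\delta, 2)$-flatness. For general $\alpha$, the strategy is the same: use Young's inequality with exponents tuned to produce a term of the form $r^\alpha$ from $\delta^{1/\alpha}\, r$ (pairing with a constant multiple of $\delta$), and then absorb the residual smoothness term $l_{f,1}r^2/2$ into $c\, r^\alpha$.

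The main obstacle will be the clean handling of general $\alpha$. Applying Young directly with conjugate exponents $(\alpha, \alpha/(\alpha-1))$ produces either $\delta^{1/(\alpha-1)}$ or $r^{\alpha/(\alpha-1)}$ rather than the target $\delta$ and $r^\alpha$, so a single-shot application does not close the bound. I would address this by a case split on $r$ against a $\delta$-dependent threshold (e.g., $r \leq \delta^{(\alpha-1)/\alpha}$ versus $r > \delta^{(\alpha-1)/\alpha}$), sending the linear term into the $\delta$ bucket on the small-$r$ side and into the $r^\alpha$ bucket on the large-$r$ side. The quadratic term $l_{f,1}r^2/2$ is absorbed into $c\, r^\alpha$ whenever $\alpha \geq 2$; for $\alpha < 2$ a boundedness-type assumption on $\mathcal{Y}$ (implicit in the PEFT application) would be invoked. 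Throughout, all constants remain of order $l_{f,1}$, giving $c = \Oc(l_{f,1})$.
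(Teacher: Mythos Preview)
Your overall architecture---descent lemma at $y_g^*(x)$, then split the linear term with a Young-type inequality and absorb the quadratic remainder into $c\,r^\alpha$---is exactly the paper's. The gap is in the case $\alpha<2$, which is the only regime the paper actually uses (all later results take $\alpha\in(1,1.5]$). You correctly observe that a single Young inequality produces the wrong exponents, and that $\tfrac{l_{f,1}}{2}r^2$ does not fit under $c\,r^\alpha$ for large $r$; but your proposed workaround, an implicit boundedness of $\mathcal{Y}$, is not assumed anywhere and is not how the paper closes the argument.

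The paper's fix is to split at $r=1$ rather than at a $\delta$-dependent threshold. For $r<1$ and $\alpha\in(1,2)$ one has $r^2\leq r^\alpha$, so the smoothness remainder is immediately bounded by $\tfrac{l_{f,1}}{2}r^\alpha$. For the linear term on this side, instead of Young, the paper uses the elementary fact that $ab\leq a^\alpha+b^\alpha$ whenever $a,b\in(0,1)$ and $\alpha\in(1,2)$ (since $ab\leq\max\{a,b\}^2\leq\max\{a,b\}^\alpha\leq a^\alpha+b^\alpha$), applied with $a=\delta^{1/\alpha}$ and $b=r$; this gives $\delta^{1/\alpha}r\leq\delta+r^\alpha$ in one line. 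For $r\geq1$, the paper invokes $l_{f,0}$-Lipschitzness of $f(x,\cdot)$ at $y_g^*(x)$ with $l_{f,0}=\delta^{1/\alpha}$, obtaining $|f(x,y)-f(x,y_g^*(x))|\leq\delta^{1/\alpha}r\leq\delta^{1/\alpha}r^\alpha$ since $r\leq r^\alpha$ when $r\geq1$. Note that this last step quietly imports the Lipschitz hypothesis rather than smoothness alone, a mild looseness in the paper's own proof relative to the Observation's stated premises.
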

The proof of Observation \ref{obs:small_grad_gives_flat} is provided in Appendix \ref{appendix:proof_small_grad_gives_flat}. It demonstrates that assuming small $\|\nabla_y f(x, y_g^*(x))\|$ is stronger than assuming flatness since $\ell_{f,0}=\delta^{\frac{1}{\alpha}}>\delta$ when $\alpha>1$. Below, we will show that the flatness condition automatically holds near the LL optimal solution $y_g^*(x)$. 

\begin{observation}
\label{obs: local abrupt change}
Under the smoothness condition of $f$, the $(\delta, \alpha)$-flatness condition holds automatically for all $y \in \{ y: |f(x,y) - f(x, y_g^*(x))| \leq \delta \}$.
\end{observation}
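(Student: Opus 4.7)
The plan is to show the claim is essentially definitional, with the smoothness assumption entering only to certify that the sub-level set $\mathcal{A}_\delta(x) := \{y : |f(x,y) - f(x,y_g^*(x))| \leq \delta\}$ is non-degenerate. First I would fix any $y \in \mathcal{A}_\delta(x)$ and invoke the defining inequality of this set, namely $|f(x,y_g^*(x)) - f(x,y)| \leq \delta$. Since $c\|y_g^*(x) - y\|^\alpha \geq 0$ for every $c \geq 0$ and $\alpha \geq 1$, I can append this nonnegative term to the right-hand side to obtain
\begin{equation*}
|f(x,y_g^*(x)) - f(x,y)| \leq \delta \leq c\|y_g^*(x) - y\|^\alpha + \delta,
\end{equation*}
which matches Definition~\ref{def: flatness} verbatim. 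This holds for any $c \geq 0$, so the $(\delta,\alpha)$-flatness condition is in fact satisfied on $\mathcal{A}_\delta(x)$ even with modulus $c = 0$, regardless of the choice of $\alpha \geq 1$.

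The role of the $l_{f,1}$-smoothness of $f(x,\cdot)$ is only to guarantee that this observation is not vacuous: the set $\mathcal{A}_\delta(x)$ contains an explicit ball around $y_g^*(x)$. Applying the descent lemma gives
\begin{equation*}
|f(x,y) - f(x,y_g^*(x))| \leq \|\nabla_y f(x,y_g^*(x))\|\,\|y - y_g^*(x)\| + \tfrac{l_{f,1}}{2}\|y - y_g^*(x)\|^2,
\end{equation*}
so any $y$ for which the right-hand side does not exceed $\delta$ automatically lies in $\mathcal{A}_\delta(x)$. Solving this quadratic bound yields a concrete radius depending on $\delta$, $l_{f,1}$, and $\|\nabla_y f(x,y_g^*(x))\|$, certifying that near every lower-level optimizer there is a genuine neighborhood on which the flatness condition holds for free.

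There is no real technical obstacle here; the only subtlety is interpretational. If the observation is read as requiring the flatness inequality only for $y \in \mathcal{A}_\delta(x)$, then the two-line argument above closes it immediately with $c = 0$. If one instead wishes to extend the bound globally in $y$, the descent-lemma inequality above already provides a polynomial envelope in $\|y - y_g^*(x)\|$ that can be absorbed into $c\|y - y_g^*(x)\|^\alpha$ by taking $c$ sufficiently large in terms of $l_{f,1}$ (and, if needed, the diameter of $\mathcal{Y}$); however, this step appears unnecessary for the way the observation is stated, and I would leave it as an optional remark rather than the main content of the proof.
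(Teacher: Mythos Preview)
Your argument is correct and matches the paper's treatment: the paper does not give a separate proof of this observation, treating it as immediate from the definition, and your two-line verification that $|f(x,y_g^*(x))-f(x,y)|\leq\delta\leq c\|y_g^*(x)-y\|^\alpha+\delta$ on the sub-level set is exactly the intended reasoning. Your additional remark on the role of smoothness (ensuring the set contains a genuine neighborhood of $y_g^*(x)$) is a nice clarification that the paper only alludes to informally.
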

Since $f$ is continuous and smooth, this observation implies that the flatness condition permits abrupt, unstable changes in the $\Oc(\delta)$-neighborhood of $y_g^*(x)$. This demonstrates that the flatness condition is relatively mild and further confirms that it is strictly weaker than the small Lipschitz condition, which explicitly requires $\| \nabla_y f(x,y_g^*(x))\|$ to be small. Figure \ref{fig:flatness_ex2} visualize an example that is $(3e^{-3}, 1.1)$-flat with modulus $c = 5$ at $y_g^*(x)$, but it exhibits a sharp change leading to a large Lipschitz continuity constant $\nabla_{y}f(x,y_g^*(x))=1000$. The details of Figure \ref{fig:flatness_ex2} are deferred to Appendix~\ref{sec:ob2_example}.

Owing to the Hölder-alike condition, the following observation shows that outside of the ${\cal O}(\delta)$ neighborhood, the curvature of the flatness condition is also milder than the Lipschitz condition. 

\begin{figure}[t]
\centering
\begin{minipage}{0.57\textwidth}
\centering
\includegraphics[width=1\linewidth]{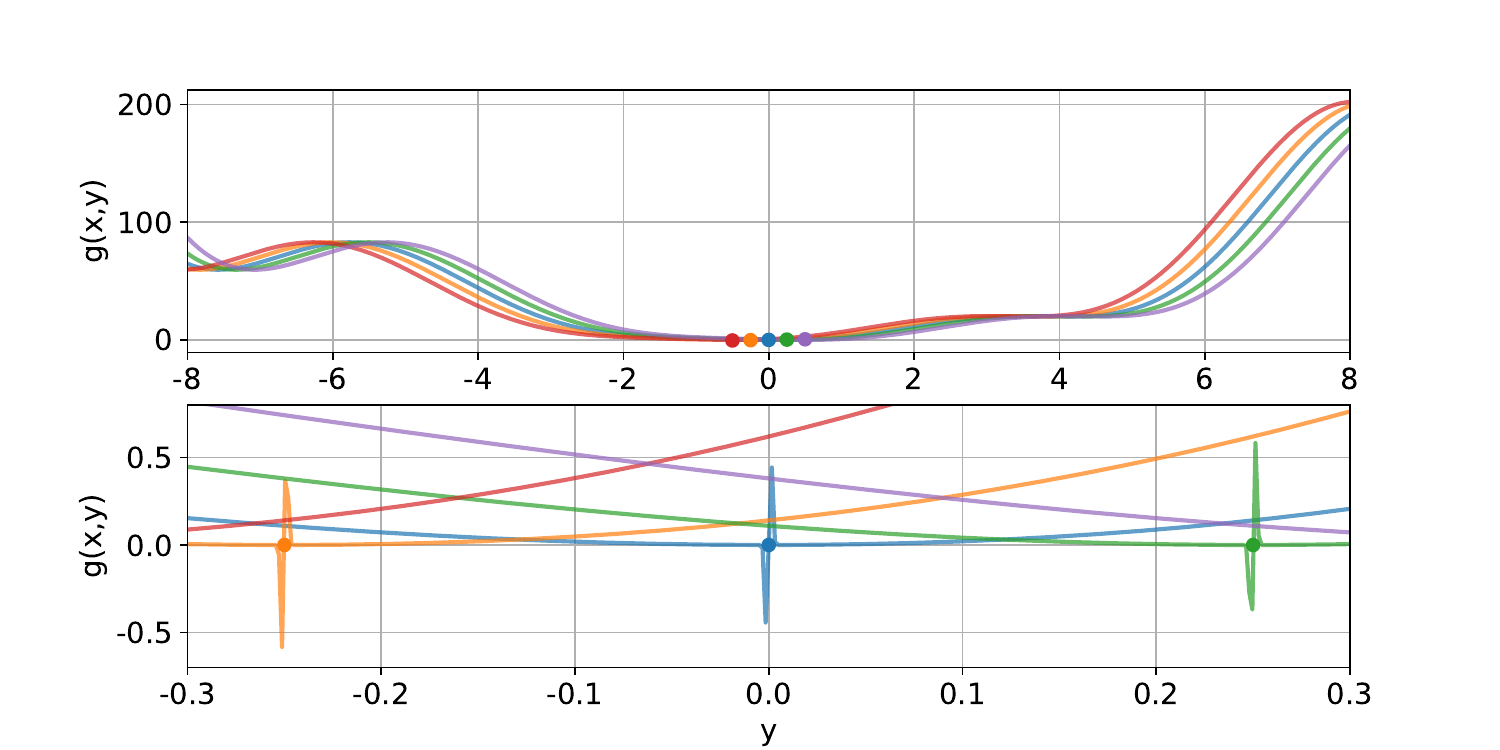}
\caption{Visualization of $f(x, y)$ in Example~\ref{example:toy_examplE_3} with details deferred to Appendix \ref{sec:ob2_example}. Colored curves represent $f(x, \cdot)$ for different $x$; dots show $(y_g^*(x), f(x, y_g^*(x)))$. The \textbf{upper} plot shows $f(x,\cdot)$ on a larger scale, and the \textbf{lower} one illustrates the fluctuation around $y_g^*(x)$. }
\label{fig:flatness_ex2}
\end{minipage}%
\vspace{-0.2cm}
\hfill
\begin{minipage}{0.4\textwidth}
\centering
\includegraphics[width=\linewidth]{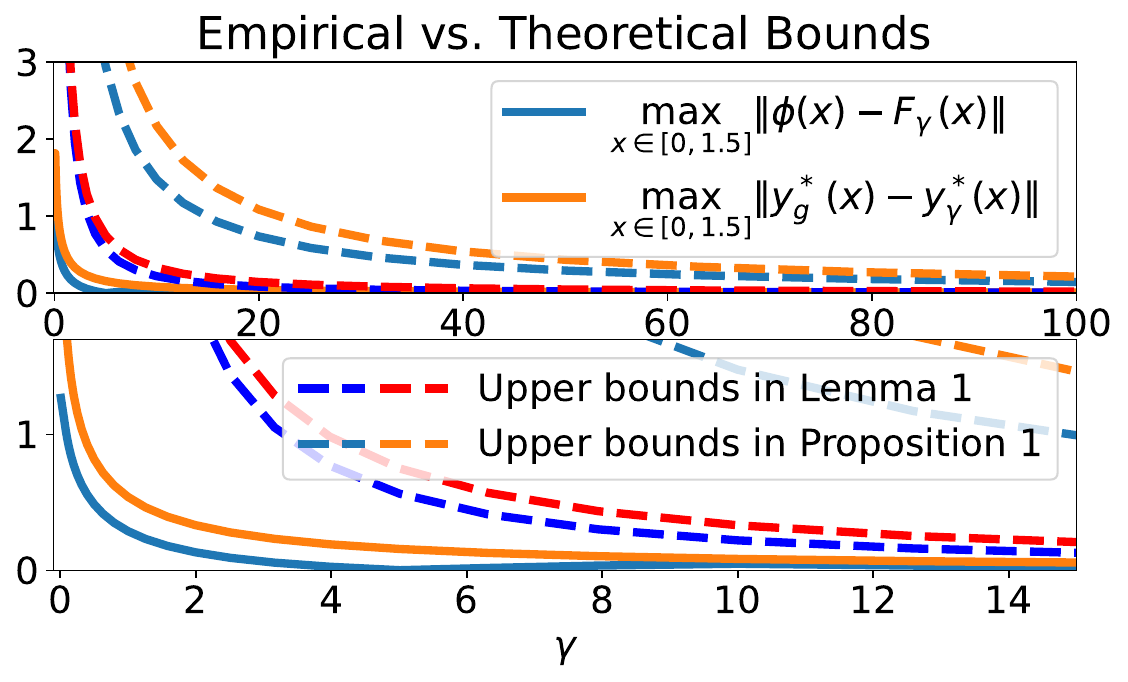}
\caption{Empirical bounds for $\|\phi(x) - F_\gamma(x)\|$ and $\|y_\gamma^*(x) - y_g^*(x)\|$ versus theoretical upper bounds in Proposition~\ref{lemma: distance of yg ygam} and Lemma~\ref{lemma: tighter bound for y gam yg} for the illustration of  representation learning PEFT \eqref{eq: bilevel repre LLM}. 
The lower plot shows a smaller scale. 
}
\label{fig:toy_example_bounds}
\end{minipage}
\vspace{-0.2cm}
\end{figure}

\begin{observation}
Under $(\delta,\alpha)$-flatness, the growth rate of $f(x,\cdot)$ outside the ${\cal O}(\delta)$ neighborhood is
\begin{align}
\frac{|f(x, y) - f(x, y_g^*(x))|}{\| y - y_g^*(x)\|} \leq 
\begin{cases}
{\cal O}(1), & \text{if } \mathcal{O}(\delta) \leq \| y - y_g^*(x) \| \leq {\cal O}(1), \\
\mathcal{O}\left(\| y - y_g^*(x) \|^{\alpha - 1}\right), & \text{if } \| y - y_g^*(x) \| > {\cal O}(1).
\end{cases}
\end{align}
\end{observation}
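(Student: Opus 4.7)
The plan is to start directly from the defining inequality of $(\delta,\alpha)$-flatness (Definition~\ref{def: flatness}), namely
\begin{align}
|f(x,y)-f(x,y_g^*(x))| \;\leq\; c\,\|y-y_g^*(x)\|^{\alpha} + \delta,
\end{align}
and divide both sides by $\|y-y_g^*(x)\|$, which is legitimate since both regimes assume a strictly positive radius. This yields
\begin{align}
\frac{|f(x,y)-f(x,y_g^*(x))|}{\|y-y_g^*(x)\|}
\;\leq\; c\,\|y-y_g^*(x)\|^{\alpha-1} + \frac{\delta}{\|y-y_g^*(x)\|},
\end{align}
so the whole argument reduces to a case analysis on the two terms on the right-hand side.

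In the first regime $\mathcal{O}(\delta)\leq \|y-y_g^*(x)\|\leq \mathcal{O}(1)$, I would bound the polynomial term by $c\cdot \mathcal{O}(1)^{\alpha-1}=\mathcal{O}(1)$ using $\alpha\geq 1$ and the upper radius bound, and bound the additive term by $\delta/\mathcal{O}(\delta)=\mathcal{O}(1)$ using the lower radius bound. Summing the two gives the $\mathcal{O}(1)$ rate. In the second regime $\|y-y_g^*(x)\|>\mathcal{O}(1)$, the strategy is to show both terms are dominated by $\|y-y_g^*(x)\|^{\alpha-1}$: the first term is $c\|y-y_g^*(x)\|^{\alpha-1}$ by inspection, while for the second term I would use $\alpha>1$ together with $\|y-y_g^*(x)\|>\mathcal{O}(1)$ to write $\delta/\|y-y_g^*(x)\|\leq \delta\cdot \|y-y_g^*(x)\|^{\alpha-1}/\|y-y_g^*(x)\|^{\alpha}$, which is $\mathcal{O}(\|y-y_g^*(x)\|^{\alpha-1})$ after absorbing constants.

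There is no serious technical obstacle: the statement is essentially a bookkeeping consequence of Definition~\ref{def: flatness} and the elementary fact that a polynomial of degree $\alpha-1\geq 0$ dominates a constant once the argument is large enough. The only mildly subtle point is the boundary case $\alpha=1$ (where the second regime trivializes to $\mathcal{O}(1)$), which fits the stated convention that $\alpha>1$ is assumed in all non-degenerate uses of the flatness condition; I would note this explicitly so the two cases patch together consistently. I would also be careful to make precise which constants are absorbed in the $\mathcal{O}(\cdot)$ notation, i.e.\ $c$, the thresholds defining the $\mathcal{O}(\delta)$ and $\mathcal{O}(1)$ boundaries, and $\delta$ itself inside the $\mathcal{O}(\|y-y_g^*(x)\|^{\alpha-1})$ bound of the second regime.
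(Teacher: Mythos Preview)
Your proposal is correct and matches the paper's own argument essentially line for line: the paper also divides both sides of the flatness inequality by $\|y_g^*(x)-y\|$ and then argues that for small radii the $\delta/\|y_g^*(x)-y\|$ term dominates and is ${\cal O}(1)$, while for large radii the $c\|y_g^*(x)-y\|^{\alpha-1}$ term dominates. Your write-up is in fact more explicit than the paper's one-sentence sketch, including the careful bookkeeping of constants and the remark on the degenerate $\alpha=1$ case.
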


This is obtained by dividing both sides of the flatness inequality by $\| y_g^*(x)-y\|$. For small $\| y_g^*(x)-y\|$, the second term dominates and leads to a ${\cal O}(1)$ bound, which is the same as the Lipschitz condition. However, for
large $\| y_g^*(x)-y\|$, since $\alpha >1$, the bound $\Oc(\| y_g^*(x) - y \|^{\alpha-1})$ can be larger than ${\cal O}(1)$.
This observation further demonstrates that the flatness condition relaxes the Lipschitzness of $f(x, \cdot)$ in Assumption \ref{ass: UL Lipschitz}. Specifically, while Lipschitz continuity would require a uniform bound on the gradient, flatness allows for a higher growth rate of $\Oc(\| y - y_g^*(x) \|^{\alpha-1})$. 
For UL objective $f(x,\cdot)$ with fixed $x$, given a \textit{pre-determined $\alpha$ and modulus $c$}, the $\delta$ constant for flatness condition in Definition \ref{def: flatness} can be calculated via
\begin{align}
    \delta(x):= \max\{0,|f(x,y_g^*(x))-f(x,y_\gamma^*(x))|-c\| y_g^*(x)-y_\gamma^*(x)\|^\alpha\}. \label{eq: delta x}
\end{align}
When $\| y - y_g^*(x) \| > {\cal O}(1)$, the last term in \eqref{eq: delta x} dominates and $\delta(x)$ can effectively be $0$. Therefore, together with Observation \ref{obs: local abrupt change}, the flatness condition with small $\delta$ not only encompasses a broader function class than small Lipschitz continuous functions, but is easier to hold in practice. See the representation learning based PEFT problem for an illustration. 

\subsection{The flatness of the representation learning PEFT problem.} 
\label{section: flatness of the representation learning PEFT}
In our PEFT framework in \eqref{eq: bilevel repre LLM}, the model, which can be any structure (e.g. CNN), is parameterized with $(x,y)$ by $\pi_{x,y}(r | z) := \text{softmax}(\text{model}_{x,y}(z))_r$. It gives the model’s predicted probability for response $r$ given input question $z$. The DPO loss \citep{rafailov2023direct} over preference data $\mathcal{D}_{\text{DPO}}$, compares outputs $\pi_{x,y}$ against a reference $\pi_{\text{ref}}$ via 
\begin{equation}
    f_{\text{DPO}}(x, y) := -\frac{1}{|\mathcal{D}_{\text{DPO}}|} \sum_{(z, r_w, r_\ell) \in \mathcal{D}_{\text{DPO}}} 
\log \left( \sigma \left( 
q_\beta(x, y; z, r_w, r_\ell) \right) \right),
\end{equation}
\begin{wrapfigure}{r}{0.35\textwidth}  
\centering  
\vspace{-0.2cm}
\includegraphics[width=0.95\linewidth]{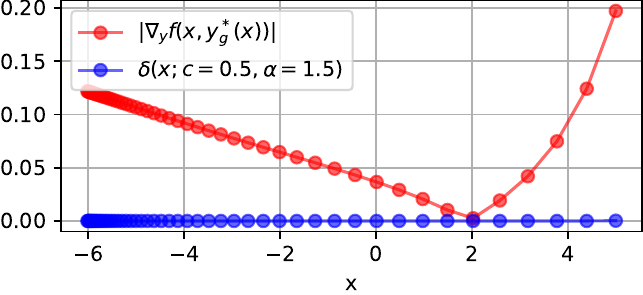}  
\vspace{-0.2cm}
\caption{Comparisons of $\delta(x)$ and $\nabla_y f(x,y_g^*(x))$ during PBGD-Free updates. The Lipschitz constant $\ell_{f,0}=\max_x \|\nabla_y f(x,y_g^*(x))\|$ is large but $\delta(x)$ is small. }  
\label{fig:toy_example_lipschitz}  
\vspace{-0.8cm}
\end{wrapfigure}
where $q_\beta(x, y; z, r_w, r_\ell) :=  
\beta \log \frac{\pi_{x,y}(r_w \mid z)}{\pi_{\text{ref}}(r_w \mid z)} 
- \beta \log \frac{\pi_{x,y}(r_\ell \mid z)}{\pi_{\text{ref}}(r_\ell \mid z)}$, $r_w$ and $r_\ell$ are the preferred and rejected responses to input $z$.
The SFT loss operates on supervised dataset $\mathcal{D}_{\text{SFT}}$ through
\begin{align}
    g_{\text{SFT}}(x,y):= -\frac{1}{|\mathcal{D}_{\text{SFT}}|} \sum_{(z,r_{\text{SFT}}) \in \mathcal{D}_{\text{SFT}}} 
\log \left( \pi_{x,y}(r_{\text{SFT}}|z)\right) .
\end{align}

\vspace{-0.2cm}
Both objectives are differentiable with the following gradients 
\begin{align}
\nabla f_{\text{DPO}} &= -(1-\sigma(q_\beta))\nabla q_\beta,\quad  \nabla g_{\text{SFT}} = -\nabla\pi/\pi.
\end{align}

While the Lipschitz constant for this problem is large, it satisfies the flatness condition with small $\delta$. To illustrate, we revisit the example in Figure \ref{fig:toy_example_convergence_results}; see the detailed setting in Appendix \ref{app:toy_example}. As in Figure \ref{fig:toy_example_lipschitz}, the flatness constant $\delta(x) \leq 0.0003$ in the blue line is small throughout optimization for $c = 0.5$ and $\alpha = 1.5$, despite a large Lipschitz constant in red. This confirms that the loss landscape analysis under the Lipschitz condition is not tight, as $l_{f,0}$ remains non-negligible even in local neighborhoods, whereas the flatness condition allows for a tighter analysis. The small $\delta(x)$ values along the PBGD-Free trajectory validate that \textit{the PEFT problem \eqref{eq: bilevel repre LLM} satisfies the flatness condition}, which inspired us to establish the enhanced analysis of the PBGD-Free algorithm under the flatness condition. In real-world PEFT problems, e.g. ones in Section \ref{sec: exp}, $\delta(x)$ in \eqref{eq: delta x} is typically small because $y_g^*(x)$ and $y_\gamma^*(x)$ are usually more than $1$ unit apart, whereas their impact on $f_{\text{DPO}}$ is marginal.

\subsection{Convergence analysis for the PBGD-Free algorithm}
\label{sec:PBGD-Free}

As shown in \eqref{bias_value}, the first term is the major bottleneck of the divergence issue of the PBGD-Free algorithm under the Lipschitz condition in Proposition \ref{prop: PBGD}. The key to establishing the convergence guarantee for the PBGD-Free algorithm is the tighter bound of $\|y_g^*(x) - y_\gamma^*(x)\|$ and $\| \phi(x)-F_\gamma(x)\|$ under the flatness condition, compared to the results in Lemma \ref{lemma: distance of yg ygam}. We highlight the results as follows. 
\begin{lemma}[\textbf{Tighter analysis on function value gap}]
\label{lemma: tighter bound for y gam yg}
Suppose Assumption \ref{assumption: standard assumption} holds.  For fixed $x$, suppose $f(x,\cdot)$ is $(\delta,\alpha)$-flat at any $y_g^*(x) \in S_g^*(x)$ with $\alpha\in (1,1.5]$. Then, there exists $\gamma^*>0$ such that for $\gamma\geq \gamma^*$, we have 
\begin{align}
\| \phi(x)-F_\gamma(x) \| =& {\cal O}(\gamma^{-\frac{\alpha}{2-\alpha}}+\delta), ~~ \text{and}~~ 
 d_{S_\gamma^*(x)}(y_g),d_{S_g^*(x)}(y_\gamma) =  {\cal O}\big(\gamma^{-\frac{1}{2-\alpha}}+\delta^{\frac{1}{2}}\gamma^{-\frac{1}{2}}\big),
\label{eq: yg ygam tighter bound}
\end{align}
for any $y_g\in S_g^*(x)$ and $y_\gamma\in S_\gamma^*(x)$. 
\end{lemma}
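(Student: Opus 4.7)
The plan is to turn the PL assumption into a quadratic-growth estimate, couple it to the $(\delta,\alpha)$-flatness inequality via the optimality of the penalty minimizer, solve the resulting self-bounding inequality for the distance, and read off both conclusions from it. All arguments proceed pointwise in $x$.

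First, I would promote PL to quadratic growth. Setting $c=0$ in Assumption~\ref{assumption: standard assumption}(iii) gives that $g(x,\cdot)$ is $\mu$-PL, and for $\gamma\geq\gamma^*$, $(1/\gamma)f+g$ is $\mu$-PL, so $\tilde F_\gamma=\gamma((1/\gamma)f+g)$ is $\gamma\mu$-PL in $y$. Combined with the $l_{g,1}$- and $l_{f,1}$-smoothness of the two objectives, this yields the quadratic-growth estimates
\begin{equation*}
d_{S_g^*(x)}(y)^2\leq \tfrac{2}{\mu}\bigl(g(x,y)-g^*(x)\bigr),\qquad d_{S_\gamma^*(x)}(y)^2\leq \tfrac{2}{\gamma\mu}\bigl(\tilde F_\gamma(x,y)-F_\gamma(x)\bigr),
\end{equation*}
where $g^*(x):=\min_z g(x,z)$.

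Next I would set up the coupling. Pick $y_\gamma\in S_\gamma^*(x)$, take $y_g$ to be a closest point of $S_g^*(x)$ to $y_\gamma$, and set $u:=\|y_g-y_\gamma\|=d_{S_g^*(x)}(y_\gamma)$. Optimality of $y_\gamma$ for $\tilde F_\gamma$ gives $\tilde F_\gamma(x,y_\gamma)\leq \tilde F_\gamma(x,y_g)=f(x,y_g)$, i.e.\ $\gamma(g(x,y_\gamma)-g^*(x))\leq f(x,y_g)-f(x,y_\gamma)$; applying $(\delta,\alpha)$-flatness of $f(x,\cdot)$ at $y_g$ bounds the right side by $c\,u^\alpha+\delta$, and chaining with the first quadratic-growth estimate produces the self-bounding inequality
\begin{equation*}
u^2\leq \tfrac{2c}{\mu\gamma}\,u^\alpha+\tfrac{2\delta}{\mu\gamma}.
\end{equation*}
Since $\alpha\in(1,1.5]\subset(1,2)$, I would solve this by separating the two regimes where each right-hand term dominates: $u^{2-\alpha}\lesssim 1/\gamma$ gives $u\lesssim \gamma^{-1/(2-\alpha)}$, while $u^2\lesssim \delta/\gamma$ gives $u\lesssim (\delta/\gamma)^{1/2}$, and their superposition matches the claimed bound on $d_{S_g^*(x)}(y_\gamma)$. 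A symmetric argument using the second quadratic-growth estimate and comparing $y_g\in S_g^*(x)$ to its closest point in $S_\gamma^*(x)$ (with $\tilde F_\gamma(x,y_g)-F_\gamma(x)\leq f(x,y_g)-f(x,y_\gamma)$) produces the matching bound on $d_{S_\gamma^*(x)}(y_g)$.

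Finally, for the function-value gap: $F_\gamma(x)\leq\phi(x)$ is immediate since any $y^\phi\in\argmin_{y\in S_g^*(x)}f(x,y)$ satisfies $\tilde F_\gamma(x,y^\phi)=\phi(x)$. For the reverse, I would drop the nonnegative term $\gamma(g(x,y_\gamma)-g^*(x))$ and apply flatness once more to obtain $\phi(x)-F_\gamma(x)\leq f(x,y_g)-f(x,y_\gamma)\leq c\,u^\alpha+\delta$. Plugging in $u^\alpha\leq \Oc(\gamma^{-\alpha/(2-\alpha)}+\delta^{\alpha/2}\gamma^{-\alpha/2})$ leaves a cross term that I would absorb via Young's inequality at conjugate exponents $p=2/\alpha$ and $q=2/(2-\alpha)$ (so $1/p+1/q=1$), yielding $\delta^{\alpha/2}\gamma^{-\alpha/2}\leq \tfrac{\alpha}{2}\delta+\tfrac{2-\alpha}{2}\gamma^{-\alpha/(2-\alpha)}$ and hence the claimed $\Oc(\gamma^{-\alpha/(2-\alpha)}+\delta)$. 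The main obstacle is the projection bookkeeping: the flatness inequality must be invoked at the \emph{specific} $y_g\in S_g^*(x)$ closest to the working $y_\gamma\in S_\gamma^*(x)$ (and vice versa), so that $\|y_g-y_\gamma\|$ equals the distance-to-set quantity being bounded; otherwise the $c u^\alpha$ coupling cannot be read off. The restriction $\alpha\leq 1.5$ is not needed to close the self-bounding inequality here—any $\alpha<2$ suffices—but it is imposed with a view toward the downstream convergence proof of Algorithm~\ref{alg: PBGD-Free}, where the rate $\gamma^{-1/(2-\alpha)}$ must be balanced against the smoothness constants of $F_\gamma$.
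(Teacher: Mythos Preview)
Your proposal is correct and follows essentially the same route as the paper: PL implies quadratic growth (for both $g$ and $\gamma^{-1}f+g$), optimality of $y_\gamma$ plus $(\delta,\alpha)$-flatness yields the self-bounding inequality $u^2\lesssim \gamma^{-1}u^\alpha+\gamma^{-1}\delta$, which is then solved by case-splitting on which term dominates. The only cosmetic difference is the order of the two conclusions: the paper handles $|\phi(x)-F_\gamma(x)|$ \emph{first} by retaining the negative quadratic term $-\tfrac{\gamma\mu}{2}u^2$ and directly maximizing $cz^\alpha-\tfrac{\gamma\mu}{2}z^2+\delta$ over $z\geq 0$ to obtain the clean $\mathcal O(\gamma^{-\alpha/(2-\alpha)}+\delta)$ bound, whereas you derive the distance bound first and substitute back, which forces the extra Young-inequality step to absorb the $\delta^{\alpha/2}\gamma^{-\alpha/2}$ cross term. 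Both routes land on the same estimates. Your remark that the restriction $\alpha\leq 1.5$ is not actually used in closing the inequality here (any $\alpha<2$ suffices) and is imposed only for the downstream convergence argument is also consistent with the paper.
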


The proof of Lemma \ref{lemma: tighter bound for y gam yg} is available at Appendix \ref{appendix: proof of tighter bound for yF yg}.
When $\delta$ is smaller than target accuracy $\epsilon$, achieving $\| \phi(x)-F_\gamma(x)\|,\|y_g^*(x)- y_\gamma^*(x) \|^2 ={\cal O}( \epsilon)$ only requires $\gamma = {\cal O}(\epsilon^{-\frac{2-\alpha}{2}})$, which is strictly smaller than the choice of $\gamma = {\cal O}(\epsilon^{-\frac{1}{2}})$ in previous literature \citep{shen2023penalty,chen2023bilevel,kwon2023penalty,kwon2023fully}. This also aligns with common practice, where the penalty constant $\gamma$ does not need to be excessively large. For instance, the UL objective in Example \ref{example:toy_examplE_3} is $(10^{-3},1.1)$-flat and therefore choosing $\gamma = 15$
gives desired accuracy, supporting the rule of thumb: $\gamma \approx 15$ is a reasonable choice. In Figure \ref{fig:toy_example_bounds}, we also show that our bound under the flatness condition in Lemma \ref{lemma: tighter bound for y gam yg} is tighter than the one under the Lipschitz condition in Proposition \ref{lemma: distance of yg ygam} for the representation learning PEFT \eqref{eq: bilevel repre LLM}.

Since Lemma~\ref{lemma: tighter bound for y gam yg} provides a per-iterate bound with fixed $x$, the next step is to analyze the Lipschitz continuity of the flatness constant $\delta (x)$ with respect to $x$, enabling a uniform bound across iterations. 

\begin{lemma}[\textbf{Lipschitz continuity of flatness constant $\delta(x)$}]
\label{lemma: Lipschitz delta x}
Suppose Assumption \ref{assumption: standard assumption} holds. Then fixing some $c\geq 0$ and $\alpha\in (1,2)$, there exists some trajectory of $y_g^*(x)$, $y_\gamma^*(x)$ such that the flatness constant of $f(x,\cdot)$, 
$\delta(x)$ defined in \eqref{eq: delta x}, is $\Oc(c\gamma^{-(\alpha-1)})$-Lipschitz-continuous in $x$.
\end{lemma}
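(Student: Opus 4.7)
The plan is to work from the closed-form
$\delta(x) = \max\{0, A(x) - B(x)\}$ with $A(x) := |f(x, y_g^*(x)) - f(x, y_\gamma^*(x))|$ and $B(x) := c\|y_g^*(x) - y_\gamma^*(x)\|^\alpha$, bound the Lipschitz constants of $A$ and $B$ separately, and then invoke the $1$-Lipschitzness of $t \mapsto \max\{0,t\}$ to conclude. The phrase ``there exists some trajectory'' indicates that the selectors $y_g^*(\cdot)$ and $y_\gamma^*(\cdot)$ must be chosen carefully; this is the first step. Under Assumption~\ref{assumption: standard assumption}, I would construct measurable selectors $y_g^*(x) \in S_g^*(x)$ and $y_\gamma^*(x) \in S_\gamma^*(x)$ that are $\Oc(1)$-Lipschitz in $x$, using a standard PL/smoothness gradient-flow (or proximal-point) trajectory argument: follow the minimizing iteration from an initialization that itself depends Lipschitz-continuously on $x$ and pass to the limit using the $\mu$-PL contraction on both $g(x,\cdot)$ and $f(x,\cdot)+\gamma g(x,\cdot)$.

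Next I would establish two quantitative ingredients. From first-order optimality of $y_\gamma^*(x)$ we have $\nabla_y f(x,y_\gamma^*) = -\gamma \nabla_y g(x,y_\gamma^*)$, which together with $\nabla_y g(x,y_g^*) = 0$, $l_{g,1}$-smoothness, and Lemma~\ref{lemma: tighter bound for y gam yg} (in the regime where the $\delta$-piece is absorbed into the $\gamma^{-1/(2-\alpha)}$ term) gives
$$\|\nabla_y f(x, y_\gamma^*)\| \leq \gamma l_{g,1} \|y_\gamma^* - y_g^*\| = \Oc\bigl(\gamma^{-(\alpha-1)/(2-\alpha)}\bigr),$$
and then $l_{f,1}$-smoothness transfers the bound to $\|\nabla_y f(x, y_g^*)\| = \Oc(\gamma^{-(\alpha-1)/(2-\alpha)})$. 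Plugging these into the chain rule for $B(x)$ gives $\|\nabla B\| \leq c\alpha\|y_g^* - y_\gamma^*\|^{\alpha-1}(L_g + L_\gamma) = \Oc\bigl(c\gamma^{-(\alpha-1)/(2-\alpha)}\bigr)$, while the total-derivative expansion
$$\nabla_x f(x,y_g^*) - \nabla_x f(x,y_\gamma^*) + \nabla_y f(x,y_g^*)^\top J_{y_g^*}(x) - \nabla_y f(x,y_\gamma^*)^\top J_{y_\gamma^*}(x)$$
for $A(x)$ is bounded term-by-term by $\Oc(\gamma^{-(\alpha-1)/(2-\alpha)})$ using $l_{f,1}$-smoothness together with the preceding gradient estimates and $\|J_{y_g^*}\|,\|J_{y_\gamma^*}\| = \Oc(1)$ from Step~1. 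Since $(\alpha-1)/(2-\alpha) \geq \alpha-1$ for $\alpha \in (1,2)$, both Lipschitz constants collapse to $\Oc(c\gamma^{-(\alpha-1)})$ for sufficiently large $\gamma$, and the envelope step $|\delta(x_1) - \delta(x_2)| \leq |A(x_1) - A(x_2)| + |B(x_1) - B(x_2)|$ closes the argument.

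The main obstacle is the Lipschitz-selector construction in Step~1: PL only guarantees a flat valley of minimizers rather than a unique minimizer, so an arbitrary measurable selector can jump discontinuously as $x$ varies, which would invalidate the chain-rule bookkeeping used in Step~2. Pinning down a ``trajectory'' along which both $y_g^*(x)$ and $y_\gamma^*(x)$ evolve in an $\Oc(1)$-Lipschitz fashion is what makes the subsequent estimates legal, and is the reason the lemma is phrased existentially rather than universally. Once that selection is fixed, the remainder is routine chain-rule and triangle-inequality bookkeeping, with the exponent arithmetic $(\alpha-1)/(2-\alpha) \geq \alpha-1$ being the only non-obvious check.
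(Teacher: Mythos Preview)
Your decomposition $\delta(x) = \max\{0, A(x) - B(x)\}$ and the mean-value treatment of $B(x)$ match the paper's proof. The gap is in your handling of $A(x)$: your term-by-term chain-rule bound requires $\|\nabla_y f(x,y_g^*)\| = \Oc(\gamma^{-(\alpha-1)/(2-\alpha)})$, which you derive via first-order optimality and Lemma~\ref{lemma: tighter bound for y gam yg}. But that lemma's distance bound carries a $\delta^{1/2}\gamma^{-1/2}$ term, and dropping it (``the regime where the $\delta$-piece is absorbed'') presupposes that $\delta(x)$ is already small---circular, since the present lemma is stated unconditionally in $\delta$. If you instead fall back to the non-circular bound $\|y_g^*-y_\gamma^*\| = \Oc(\gamma^{-1})$ from Proposition~\ref{lemma: distance of yg ygam}, the same optimality argument yields only $\|\nabla_y f(x,y_\gamma^*)\| \leq \gamma l_{g,1}\cdot\Oc(\gamma^{-1}) = \Oc(1)$, and your two cross terms $\nabla_y f(x,y_g^*)^\top J_{y_g^*}$ and $\nabla_y f(x,y_\gamma^*)^\top J_{y_\gamma^*}$ are each $\Oc(1)$, so bounding them separately cannot deliver $\Oc(\gamma^{-(\alpha-1)})$.

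The paper avoids this by using Proposition~\ref{lemma: distance of yg ygam} throughout (never invoking Lemma~\ref{lemma: tighter bound for y gam yg} here) and by exploiting the \emph{difference} structure of $A(x)$ rather than bounding the cross terms individually. It writes the Jacobians via implicit differentiation, $J_{y_g^*} = -\nabla_{yy}g^\dagger\nabla_{yx}g$ and $J_{y_\gamma^*} = -(\gamma^{-1}\nabla_{yy}f+\nabla_{yy}g)^\dagger(\gamma^{-1}\nabla_{yx}f+\nabla_{yx}g)$, and groups the cross terms as $\nabla_y f\,J_{y_g^*} - (\nabla_y f + E_2)(J_{y_g^*}+E')$ with perturbations $\|E_2\|,\|E'\| = \Oc(\gamma^{-1})$, the pseudoinverse piece controlled by Wedin's bound \cite{wedin1973perturbation}. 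Expanding yields $\Oc(\|\nabla_y f(x,y_g^*)\|\cdot\gamma^{-1}) = \Oc(\gamma^{-1})$, which only needs $\|\nabla_y f(x,y_g^*)\|$ bounded, not vanishing with $\gamma$. For the Lipschitz-selector step the paper simply cites the existing result in \cite{shen2023penalty} rather than constructing a gradient-flow trajectory.
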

The proof of the Lemma \ref{lemma: Lipschitz delta x} is available in Appendix \ref{appendix: proof of Lipschitz delta x}. However, Lemma \ref{lemma: tighter bound for y gam yg} and Lemma \ref{lemma: Lipschitz delta x} only enable the convergence of PBGD-Free to the stationary point of the penalized objective $F_\gamma(x)$. We next establish the approximate equivalence of the stationary points to the original BLO problem \eqref{eq: original problem 1}.  

\begin{lemma}[\textbf{Approximate equivalence of stationary points}]\label{stationary_relation}
Suppose Assumption \ref{assumption: standard assumption} holds. Let $x^*$ be an $\epsilon$ stationary point of $F_\gamma (x^*)$ and suppose $f(x^*,\cdot)$ is $(\delta,\alpha)$-flat at any $y_g^*(x^*) \in S_g^*(x^*)$ with $\alpha\in (1,1.5]$ and $\delta\leq{\cal O}(\epsilon^{\frac{\alpha}{2}})$. Then there exists $\gamma^*={\cal O}(\epsilon^{-\frac{2-\alpha}{2}})$ and $y_\gamma\in S_\gamma^*(x^*)$ such that  for $\gamma\geq \gamma^*$, $(x^*,y_\gamma)$ is the ${\cal O}(\epsilon)$ stationary point of the original BLO problem \eqref{eq: original problem 1}. 
\end{lemma}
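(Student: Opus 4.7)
The plan is to chain the approximation results already established---Lemma \ref{lemma: tighter bound for y gam yg} for the flatness-tightened primal gap, Proposition \ref{lemma: distance of yg ygam} for the hyper-gradient approximation, and a standard smoothness argument for LL feasibility---to propagate the $\epsilon$-stationarity of $F_\gamma$ at $x^*$ into an $O(\epsilon)$-stationarity condition for the original BLO at $(x^*,y_\gamma)$. Adopting the squared-norm convention used elsewhere in the paper (cf.\ Proposition~\ref{prop: PBGD}), my goal reduces to verifying both $\|\nabla\phi(x^*)\|^2 = O(\epsilon)$ and $g(x^*,y_\gamma) - \min_z g(x^*,z) = O(\epsilon)$ for a well-chosen $y_\gamma \in S_\gamma^*(x^*)$.

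I first calibrate $\gamma^* = \Theta(\epsilon^{-(2-\alpha)/2})$ and apply Lemma \ref{lemma: tighter bound for y gam yg} under the assumed bound $\delta \le O(\epsilon^{\alpha/2})$. The exponent identities $\frac{2-\alpha}{2}\cdot\frac{2}{2-\alpha}=1$ and $\frac{\alpha}{2}+\frac{2-\alpha}{2}=1$ collapse both additive terms of the lemma to $O(\epsilon)$, giving
\begin{equation*}
d_{S_g^*(x^*)}(y_\gamma)^2 \;=\; O\bigl(\gamma^{-2/(2-\alpha)} + \delta\,\gamma^{-1}\bigr) \;=\; O(\epsilon)
\end{equation*}
for every $y_\gamma \in S_\gamma^*(x^*)$. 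Plugging this $O(\epsilon^{1/2})$ distance into the second bound of Proposition \ref{lemma: distance of yg ygam} yields $\|\nabla\phi(x^*) - \nabla F_\gamma(x^*)\| = O(\epsilon^{1/2})$, and a triangle inequality $\|\nabla\phi(x^*)\|^2 \le 2\|\nabla F_\gamma(x^*)\|^2 + 2\|\nabla\phi(x^*)-\nabla F_\gamma(x^*)\|^2 \le 2\epsilon + O(\epsilon) = O(\epsilon)$ closes the UL half.

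For the LL half, I pick $y_g^\star := \argmin_{y \in S_g^*(x^*)} \|y-y_\gamma\|$ so that $\|y_\gamma - y_g^\star\| = d_{S_g^*(x^*)}(y_\gamma) = O(\epsilon^{1/2})$. In the unconstrained setting the PL condition in Assumption \ref{assumption: standard assumption}-iii (applied with $c=0$) forces $\nabla_y g(x^*,y_g^\star) = 0$ at any global minimizer, and the $l_{g,1}$-smoothness descent inequality then yields $g(x^*,y_\gamma) - \min_z g(x^*,z) \le (l_{g,1}/2)\|y_\gamma - y_g^\star\|^2 = O(\epsilon)$, completing the LL half.

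The main obstacle---and the place where the improvement over prior analyses lives---is the tight exponent calibration. If one used only the Lipschitz-based bound $\|y_g^* - y_\gamma^*\| = O(l_{f,0}\mu^{-1}\gamma^{-1})$ from Proposition \ref{lemma: distance of yg ygam}, one would be forced into $\gamma = \Theta(\epsilon^{-1/2})$ with a persistent $l_{f,0}$ factor; the flatness-tightened bound of Lemma \ref{lemma: tighter bound for y gam yg} is precisely what permits the smaller penalty $\gamma^* = \Theta(\epsilon^{-(2-\alpha)/2})$, and the assumption $\delta \le O(\epsilon^{\alpha/2})$ is exactly calibrated to prevent the $\delta\gamma^{-1}$ term from dominating. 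A minor secondary concern is matching the flatness reference point $y_g^*(x^*)$ with the projection of $y_\gamma$ onto $S_g^*(x^*)$; since the flatness condition is assumed at \emph{any} element of $S_g^*(x^*)$, choosing $y_g^\star$ as the projection suffices, and Lemma \ref{lemma: Lipschitz delta x} is not needed because the whole argument lives at the single point $x^*$.
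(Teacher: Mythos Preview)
Your route differs from the paper's in both the target notion of stationarity and the bridging argument. The paper does \emph{not} define the stationary condition of \eqref{eq: original problem 1} via $\nabla\phi$; it uses the KKT system of the gradient-constrained reformulation $\min_{x,y} f(x,y)$ s.t.\ $\nabla_y g(x,y)=0$ (see \eqref{opt1} and Lemma~\ref{lm:PLcalm}), i.e.\ it shows there is a bounded multiplier $w$ satisfying \eqref{KKT_conditions_form}. Starting from the expression \eqref{eq: nabla F gam} for $\nabla F_\gamma(x^*)$, the paper Taylor-expands $\nabla g(x^*,y_\gamma)-\nabla g(x^*,y_g)$ around $y_\gamma$, identifies the multiplier as $w=\gamma(y_\gamma-y_g)$, checks $\|w\|=O(\gamma\cdot\epsilon^{1/2})=O(\epsilon^{\alpha/2})\le 1$ is finite, and obtains \eqref{KKT_conditions_form:1}--\eqref{KKT_conditions_form:2} with residual $O(\|y_\gamma-y_g\|^2)=O(\epsilon)$; the feasibility condition \eqref{KKT_conditions_form:3} follows from smoothness and $d_{S_g^*(x^*)}(y_\gamma)=O(\epsilon^{1/2})$, which is essentially your LL half. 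This approach never appeals to $\nabla\phi$ and therefore sidesteps any question of whether $\phi$ is differentiable under PL-only lower level.

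Your argument, by contrast, has a hypothesis gap: you invoke the second bound of Proposition~\ref{lemma: distance of yg ygam} to conclude $\|\nabla\phi(x^*)-\nabla F_\gamma(x^*)\|=O(d)$, but that proposition is stated \emph{under Assumptions~\ref{ass: UL Lipschitz}--\ref{assumption: standard assumption}}, whereas the whole point of Lemma~\ref{stationary_relation} (as emphasized in the surrounding text) is to replace Assumption~\ref{ass: UL Lipschitz} by the flatness condition. To make your route rigorous you would need to (i) re-establish differentiability of $\phi$ and the bound $\|\nabla\phi-\nabla F_\gamma\|=O(\|y_g^*-y_\gamma^*\|)$ from Assumption~\ref{assumption: standard assumption} alone, and (ii) argue that the hidden constant in that $O(\cdot)$ does not secretly carry an $l_{f,0}$ factor. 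Your $\gamma$-calibration and LL-feasibility half are correct and match the paper's.
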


The proof of the Lemma \ref{stationary_relation} is available in Appendix \ref{sec:stationary_relations}, which generalizes the definition of stationary condition for \eqref{eq: original problem 1} \citep{xiao2023generalized,ye2022bome} and its relations to that of the penalty problem \citep{shen2023penalty,kwon2023penalty,ye2022bome} under flatness condition instead of Lipschitz continuity in Assumption \ref{ass: UL Lipschitz}. In this way, building upon Lemma \ref{lemma: tighter bound for y gam yg} and Lemma \ref{lemma: Lipschitz delta x}, the convergence result for PBGD-Free in Algorithm \ref{alg: PBGD-Free} is stated as follows. 
\begin{theorem}[\textbf{Convergence of PBGD-Free}]
\label{thm: no value algorithm convergence}
Suppose
Assumption \ref{assumption: standard assumption} holds, and for all $x_t$ on the trajectory, $f(x_t,\cdot)$ is $(\delta(x_t),\alpha)$-flat at all $y_g(x_t)\in S_g^*(x_t)$ with the same $\alpha \in (1,1.5]$ and modulus $c={\cal O}(1)$. For iterations generated by Algorithm \ref{alg: PBGD-Free} with $K=1$ and step size $\eta \leq l_{F,1}^{-1}$, where $l_{F,1}$ is the smoothness constant of $F_\gamma(x)$, and
suppose for target accuracy $\epsilon$, there exists $\delta$ such that $\frac{1}{T}\sum_{t=0}^{T-1}  \delta(x_t) \leq \delta$, 
then by choosing $\gamma = {\cal O}(\epsilon^{-\frac{2-\alpha}{2}})$,
\vspace{-0.1cm}
\begin{equation}
\frac{1}{T}\sum_{t = 0}^{T-1} \|\nabla F_\gamma(x_t)\|^2 \leq {\cal O}(T^{-1} + \delta^{\frac{2(\alpha-1)}{\alpha}}).
\end{equation}
\end{theorem}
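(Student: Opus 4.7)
The plan is to treat PBGD-Free as an inexact first-order method on the penalty objective $F_\gamma$ and combine the descent inequality with the two new quantitative tools just proved: the tighter bound on $\|y_g^*(x)-y_\gamma^*(x)\|$ from Lemma~\ref{lemma: tighter bound for y gam yg} and the Lipschitz regularity of $\delta(x)$ from Lemma~\ref{lemma: Lipschitz delta x}. Concretely, I would start from the $l_{F,1}$-smoothness of $F_\gamma$ (which holds with $\gamma$-independent constant by the discussion after Assumption~\ref{assumption: standard assumption}) and write, for $\eta\leq l_{F,1}^{-1}$,
\begin{equation*}
F_\gamma(x_{t+1}) \leq F_\gamma(x_t) - \tfrac{\eta}{2}\|\nabla F_\gamma(x_t)\|^2 + \eta\,\|g_t-\nabla F_\gamma(x_t)\|^2,
\end{equation*}
by substituting $g_t=\nabla_x f(x_t,y_t^\gamma)$ and splitting $g_t = \nabla F_\gamma(x_t) + (g_t-\nabla F_\gamma(x_t))$ in the standard way.

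Next, using the closed form of $\nabla F_\gamma$ in \eqref{eq: nabla F gam}, I would decompose the bias $g_t-\nabla F_\gamma(x_t)$ into (i) the \emph{inner-loop} error, controlled by $l_{f,1}\|y_t^\gamma-y_\gamma^*(x_t)\|$, and (ii) the \emph{omitted value-function} term of norm at most $\gamma\,l_{g,1}\|y_\gamma^*(x_t)-y_g^*(x_t)\|$. Term (ii) is the crux. Applying Lemma~\ref{lemma: tighter bound for y gam yg} at each $x_t$ with per-iterate flatness constant $\delta(x_t)$ yields
\begin{equation*}
\gamma\|y_\gamma^*(x_t)-y_g^*(x_t)\| = \Oc\bigl(\gamma^{-(\alpha-1)/(2-\alpha)} + \delta(x_t)^{1/2}\gamma^{1/2}\bigr),
\end{equation*}
so that after squaring, averaging, and invoking the hypothesis $\tfrac{1}{T}\sum_t\delta(x_t)\leq\delta$,
\begin{equation*}
\tfrac{1}{T}\sum_{t=0}^{T-1}\gamma^2\|y_\gamma^*(x_t)-y_g^*(x_t)\|^2 = \Oc\bigl(\gamma^{-2(\alpha-1)/(2-\alpha)} + \delta\,\gamma\bigr).
\end{equation*}
Balancing these two terms (equivalent to $\gamma=\Theta(\delta^{-(2-\alpha)/\alpha})$, which is consistent with the choice $\gamma=\Oc(\epsilon^{-(2-\alpha)/2})$ in the statement) gives the averaged squared bias $\Oc(\delta^{2(\alpha-1)/\alpha})$. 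Lemma~\ref{lemma: Lipschitz delta x} is what licenses this step: it ensures $\delta(x_t)$ changes continuously along the trajectory, so the averaged constant is genuinely representative and not dominated by isolated spikes.

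For the inner-loop error in (i), I would use a standard Lyapunov argument for warm-started single-loop methods. Since $\tilde F_\gamma(x,\cdot)$ is $\mu$-PL by Assumption~\ref{assumption: standard assumption}\,iii with suitable $\eta^\gamma$, a single gradient step contracts $\|y_{t-1}^\gamma-y_\gamma^*(x_t)\|^2$, while the drift $\|y_\gamma^*(x_{t+1})-y_\gamma^*(x_t)\|\leq L_\gamma\eta\|g_t\|$ is controlled via the $\gamma$-independent Lipschitz continuity of the selection $y_\gamma^*(\cdot)$. Defining a potential $\Phi_t = F_\gamma(x_t) + C\|y_t^\gamma-y_\gamma^*(x_t)\|^2$ with a sufficiently large constant $C$, the contraction in the inner variable absorbs the drift induced by the $x$-step, and $\Phi_t$ descends by $\tfrac{\eta}{4}\|\nabla F_\gamma(x_t)\|^2$ plus the bias-squared term from (ii). Telescoping over $t=0,\ldots,T-1$ and dividing by $T$ produces the claimed
\begin{equation*}
\tfrac{1}{T}\sum_{t=0}^{T-1}\|\nabla F_\gamma(x_t)\|^2 \leq \Oc(T^{-1}) + \Oc(\delta^{2(\alpha-1)/\alpha}).
\end{equation*}

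The main obstacle I anticipate is coordinating the step sizes $\eta,\eta^\gamma$ with the Lyapunov weight $C$ so that a \emph{single} inner step (the $K=1$ regime) still yields enough contraction after accounting for the $\Oc(\eta\|g_t\|)$ drift; this typically forces $\eta$ to be small relative to the PL rate, and the constants must be chosen to keep the $\gamma$-dependence of all intermediate bounds under control. A secondary delicate point is justifying that $y_\gamma^*(x)$ admits a $\gamma$-independent Lipschitz selection along the PBGD-Free trajectory, which is implicit in Lemma~\ref{lemma: Lipschitz delta x} but should be stated explicitly so that the drift bound above does not contaminate the final rate with hidden $\gamma$ factors.
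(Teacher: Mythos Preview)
Your plan is correct and follows essentially the same route as the paper: the $l_{F,1}$-smoothness descent inequality, the bias split into inner-loop error plus the omitted value-function term, Lemma~\ref{lemma: tighter bound for y gam yg} to bound the latter as $\Oc(\gamma^{-2(\alpha-1)/(2-\alpha)}+\delta\gamma)$, and a potential-function argument to absorb the single-step inner error with warm start. Two small remarks: the paper uses the function-value gap $h(x_t,y_t^\gamma)-v^h(x_t)$ (with $h=\gamma^{-1}f+g$) as the Lyapunov increment rather than $\|y_t^\gamma-y_\gamma^*(x_t)\|^2$, which sidesteps set-valuedness issues under PL and makes the contraction/drift calculus cleaner; and Lemma~\ref{lemma: Lipschitz delta x} is not actually needed for the averaging step you describe, since the bound $\tfrac{1}{T}\sum_t\delta(x_t)\leq\delta$ is a direct hypothesis of the theorem.
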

%
%
\vspace{-0.1cm}The proof of Theorem \ref{thm: no value algorithm convergence} is provided in Appendix \ref{appendix: proof for no value algorithm convergence}. Here, the smoothness constant $l_{F,1}$ is not scalable with $\gamma$ \citep{chen2024finding}, therefore leading to a constant step size choice.
Theorem \ref{thm: no value algorithm convergence} establishes the convergence rate of the fully-single-loop version of PBGD-Free in Algorithm \ref{alg: PBGD-Free}. The result shows that the algorithm converges to the neighborhood of a stationary point for $F_\gamma (x)$, where the stationary gap is controlled by the flatness parameter $(\delta,\alpha)$.
Specifically, for a $(\delta, \alpha)$-flat function with $\alpha \in (1,1.5)$, the convergence error scales as $\mathcal{O}(\delta^{\frac{2(\alpha-1)}{\alpha}})$, ensuring that the suboptimality gap remains small. 
For instance, for the PEFT problem in \eqref{eq: bilevel repre LLM}, $\delta(x)$ is often negligible, as per the discussion in Section \ref{section: flatness of the representation learning PEFT}. Moreover, the method follows a single-loop update scheme, which is computationally more efficient than other fully first-order methods \citep{kwon2023fully,kwon2023penalty,ye2022bome,shen2023penalty,chen2023bilevel}, as elaborated in the Appendix \ref{appendix: fully single loop of PBGD}. A comparison of the proposed algorithm with state-of-the-art fully first-order BLO methods is provided in Table \ref{tab:table1-comparison}.

\section{Numerical Experiments}
\label{sec: exp}
In this section, we empirically validate our theoretical results through experiments on real-world tasks. In the main paper, we will focus on the LLM PEFT problem \eqref{eq: bilevel repre LLM}. Additional experiments, including fair representation learning problem on the NLSY-7k dataset~\cite{rothstein2019cohort,shui2022fair}, 
and BiDORA fine-tuning~\cite{qin2024bidora}, are provided in Appendix~\ref{supp:add_exp}.
\begin{figure}[t]
    \centering
    \small
    \vspace{-0.2cm}
    \begin{minipage}{0.44\linewidth}
        \centering
        \vspace{-0.4cm}
        \centering
        \includegraphics[width=0.97\textwidth]{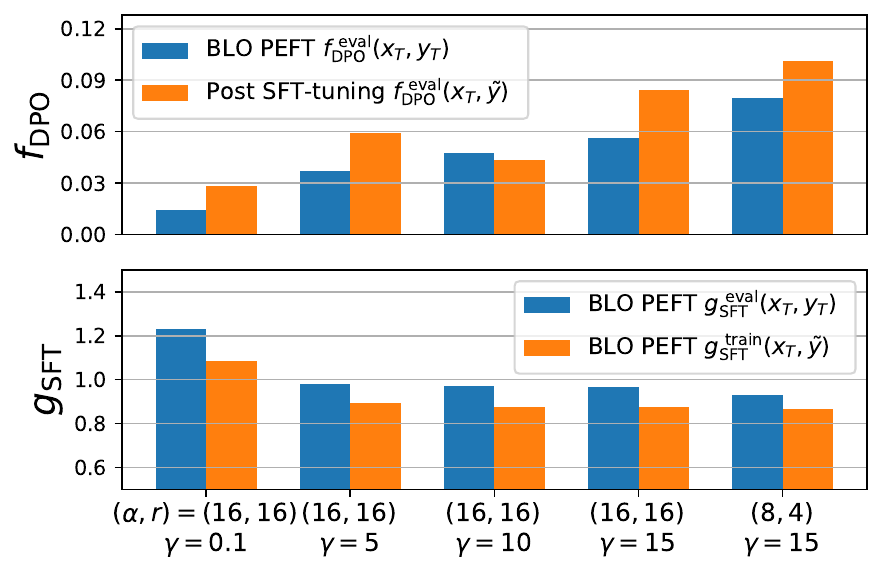}  
        \vspace{-0.2cm}
        \caption{Ablation study on penalty $\gamma$ and LoRA configuration \cite{hu2022lora} for \textsc{Pythia}-1b \citep{biderman2023pythia}. 
        }
        \label{fig: ablation_study}
    \end{minipage}
    \hfill
    \vspace{-0.34cm}
    \begin{minipage}{0.53\linewidth}
    \centering
    \vspace{-0.34cm}
    \includegraphics[width=0.89\linewidth]{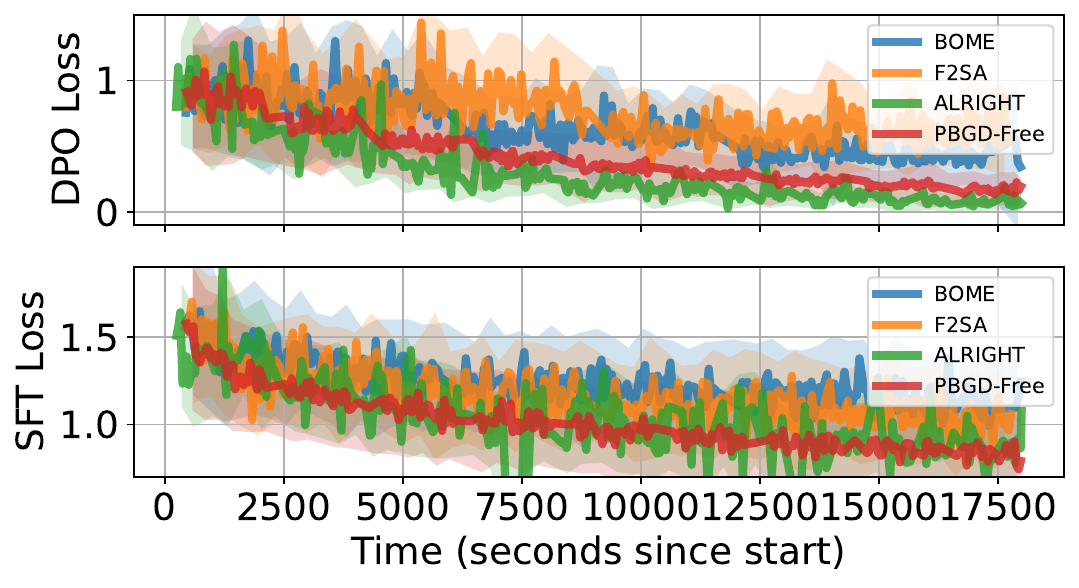}
    \vspace{-.3cm}
    \caption{Train losses vs. time for different algorithms in solving \eqref{eq: bilevel repre LLM} (or bi-objective learning for ALRIGHT) on \textsc{Llama-3-3b} \citep{grattafiori2024llama}.}
    \label{fig:DPO_SFT_loss_time}
\end{minipage}
\vspace{-0.08cm}
\end{figure}

\begin{table}[t]
    \centering
    \begin{tabular}{c||c|c|c|c}
    \hline\hline
    Methods & $f_{\text{DPO}}^{\text{eval}}(x_T,y_T)$ & $g_{\text{SFT}}^{\text{eval}}(x_T,y_T)$ & $f_{\text{DPO}}^{\text{eval}}(x_T,\tilde{y})$& $g_{\text{SFT}}^{\text{train}}(x_T,\tilde{y})$\\
    \hline\hline
    \centering {V-PBGD \cite{kwon2023penalty}}  & 0.818 & 1.0309 & 0.8423 & 0.9533\\
    \hline
    \centering {BOME \cite{ye2022bome}} & 0.8332 & 1.1552 & 0.8402 & 0.9842\\
    \hline
    \centering {ALRIGHT \citep{fernando2024mitigating}}  & 0.8055 &  0.8656 & 0.8201 & 0.7855\\
    \hline
    \centering {\textbf{PBGD-Free}}   & \textbf{0.7837} & \textbf{0.8516} &\textbf{0.8088} & \textbf{0.6688}\\
    \hline\hline
    \end{tabular}
    \vspace{0.2cm}
    \captionof{table}{Comparison of different algorithms for PEFT \textsc{Llama-3-3b} \citep{grattafiori2024llama}. Results show the DPO Loss $\downarrow$, SFT Loss $\downarrow$ for both the outcome $(x_T,y_T)$ trained on solving \eqref{eq: bilevel repre LLM} for different methods or and the outcome $(x_T,\tilde{y})$ from post-SFT-tuning on another dataset with fixed-backbone, using the same dataset fixed time of training for each. } \label{tab: LLM Training Results}
    \vspace{-0.8cm}
\end{table}

\subsection{Representation learning based LLM PEFT and its flatness}

SFT for pre-trained LLMs enhances their adaptation to downstream tasks, while DPO is commonly used to align the model with human preferences. To achieve both goals, a straightforward approach is to sequentially optimize both objectives. However, this often leads to a forgetting issue \citep{fernando2024mitigating}. 
To address this, we prioritize SFT at the LL to create a more reliable base model and use DPO at the UL to guide the human preference alignment. Given that user preferences are typically consistent across tasks, we aim to train a representation model that captures these preferences, allowing only the head model to be fine-tuned for future specific tasks, resulting in a more parameter-efficient adaptation. 
Rationale of conducting head finetuning \citep{ren2023prepare} inspired a decomposition of LLM into a backbone model $x$ (e.g., attention weights) and an output head $y$ to formulate a BLO PEFT framework \eqref{eq: bilevel repre LLM}. In our experiments, we adopt the low rank adaptation (LoRA) \citep{hu2022lora} to the backbone $x$ for PEFT on \textsc{Llama-3-3b} \citep{grattafiori2024llama} and \textsc{Pythia-1b} \citep{biderman2023pythia}, using the \texttt{Dahoas/rm-hh-rlhf} dataset for DPO and the \texttt{OpenOrca} dataset \citep{longpre2023flan} for SFT. Our code is adapted from the bilevel LLM post-training library 
\url{https://github.com/Post-LLM/BIPOST}
and experiment details are referred to Appendix \ref{exp:detail_peft_repre}. 
As preliminarily demonstrated in Figure \ref{fig:toy_example_lipschitz}, this \textit{BLO PEFT problem} in \eqref{eq: bilevel repre LLM} features flatness (small $\delta$), which is further corroborated by the observation in experiment that
the LL solution $y_g^*(x)$ and $y_\gamma^*(x)$ have $\ell_2$- distance greater than $1$, suggesting a negligible flatness constant $\delta(x)$ by \eqref{eq: delta x}. 

\vspace{-0.2cm}
\subsection{Ablation study and main experimental results for the PEFT problem \texorpdfstring{\eqref{eq: bilevel repre LLM}}{}}

\vspace{-0.2cm}

In this experiment, we consider evaluating methods on both \textbf{S1)} \textit{BLO PEFT learning phase via \eqref{eq: bilevel repre LLM}} to obtain a preference backbone $x$, and \textbf{S2)} \textit{post-SFT tuning on a new dataset} with the obtained preference backbone model $x$, to verify the representation quality and transferability of the backbone.

\begin{wrapfigure}{r}{0.408\textwidth} 
        \vspace{-0.34cm}
        \includegraphics[width=0.4\textwidth]{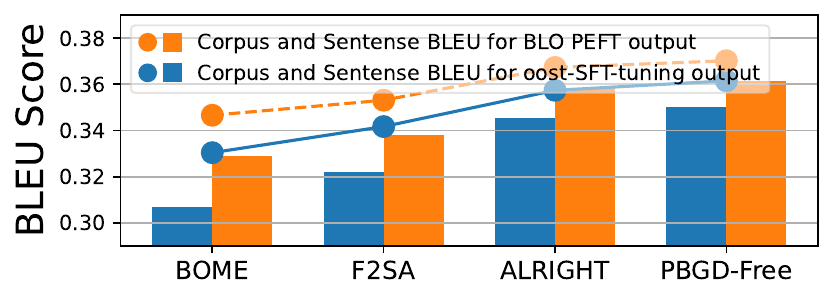}
        \vspace{-.2cm}
        \caption{BLEU-4 Corpus and BLEU-4 Sentence Score ($\uparrow$) for different algorithms for PEFT on \textsc{Llama-3-3b} \citep{grattafiori2024llama}.}
        \label{fig: bleu}

\vspace{-0.4cm}
\end{wrapfigure}

We first conduct an ablation study on the \textsc{Pythia}-1b to test the impact of the penalty constant $\gamma$ and LoRA configuration on the PBGD-Free method. We report the DPO and SFT loss under different settings for both $(x_T,y_T)$ learned from \textbf{S1)} and $(x_T,\tilde{y})$ from \textbf{S2)} in Figure \ref{fig: ablation_study}. 

\textbf{Trade-off between DPO and SFT under different $\gamma$. } According to Figure \ref{fig: ablation_study}, increasing $\gamma$ degrades DPO performance while improving SFT for \textbf{S1)}, indicating that a larger $\gamma$ provides better LL optimality. 
Notably, the SFT improvement beyond $\gamma=10$ is marginal for \textbf{S1)}, while the DPO performance significantly deteriorates, suggesting that $\gamma \approx 10$ offers the best balance as our theory predicts.

\textbf{Faster convergence over BLO baselines and stable training over bi-objective.} Since second-order BLO algorithms are inefficient in large-scale LLM training, we consider first-order methods F$^2$SA \citep{kwon2023penalty} and BOME \citep{ye2022bome} as BLO baselines. 
As shown in Figure \ref{fig:DPO_SFT_loss_time}, PBGD-Free converges faster than the BLO baselines. We additionally compare with ALRIGHT \citep{fernando2024mitigating}, an effective bi-objective algorithm, to validate the representation capability of \textit{BLO PEFT \eqref{eq: bilevel repre LLM}} formulation. ALRIGHT \citep{fernando2024mitigating} exhibits less stability during training (Figure \ref{fig:DPO_SFT_loss_time}), likely due to alternating between DPO and SFT objectives. %

\textbf{Transferability of preference backbone and strong SFT performance.} Compared with \textbf{S1)}, PBGD-Free in Figure \ref{fig: ablation_study} shows enhanced SFT with comparable DPO performance on \textbf{S2)}, suggesting it learns a transferable preference backbone $x$ through BLO \eqref{eq: bilevel repre LLM}.  Table \ref{tab: LLM Training Results} further quantifies these findings for other baselines, demonstrating that PBGD-Free achieves superior DPO and SFT performance. 
Notably, the backbone model $x$ obtained by PBGD-Free attains the lowest SFT and DPO loss on \textbf{S2)}, verifying the transferability of PBGD-Free. To further evaluate the quality of generated output, Figure \ref{fig: bleu} corroborates the SFT performance using the evaluation metrics BLEU score \citep{papineni2002bleu}, where our method outperforms all baselines, further justifying its superiority in learning a good representation. More experimental results, including semantic analysis (Table \ref{tab:sft-eval-examples}) are provided in Appendix \ref{exp:detail_peft_repre}.

\vspace{-0.2cm}
\section{Concluding remarks}
\vspace{-0.2cm}

In this paper, we propose PBGD-Free, a penalty-based method for efficiently solving the nonconvex BLO problem without solving the value-function subproblem of $y_g^*(x)$. We first show that, under a general Lipschitz condition, the convergence of PBGD-Free has a constant lower bound by the Lipschitz constant, which does not vanish unless the Lipschitz constant is sufficiently small. Motivated by empirical findings in representation learning, we then introduce a Hölder-like condition and prove that, when its constant is sufficiently small, the fully single-loop PBGD-Free algorithm achieves an iteration complexity of $\mathcal{O}(\epsilon^{-1})$. We further demonstrate that this Hölder-like condition with a small constant is strictly weaker than the small Lipschitz condition, and we verify this condition in representation-learning-based LLM PEFT, fair representation learning, and BiDORA fine-tuning. Numerical experiments in the above problems demonstrate that the PBGD-Free algorithm is computationally efficient and can outperform the existing baselines across all three applications.

\newpage 

{\fontsize{9}{9}\selectfont
\bibliographystyle{plain}
\bibliography{reference,bilevel}}

\newpage
\appendix

\newpage
\begin{center}
{\large \bf Supplementary Material for
``\FullTitle"}
\end{center}
\vspace{-1cm}

\addcontentsline{toc}{section}{} 
\part{} 
\parttoc 

\allowdisplaybreaks

\section{Preliminaries}
\label{appendix: Preliminary Knowledge}

\paragraph{Notations. } We define $v(x)=\min_{z} g(x,z)$ and $v_\gamma(x)=\min_{y} \gamma^{-1}f(x,y)+g(x,y)$. We denote 
$S_g^*(x)=\arg\min_z g(x,z)$, $S_\gamma^*(x)=\arg\min_{y} \gamma^{-1}f(x,y)+g(x,y)$,
$d_S(y):=\min_{z\in S}\|y-z\|$.

\begin{definition}[Lipschitz Continuity and Smoothness]
We say a function $f(x,y)$ is $l_{f,0}$-Lipschitz if 
\begin{align}
    \| f(x_1,y_1)-f(x_2,y_2)\|\leq l_{f,1}\|[x_1;y_1]-[x_2;y_2] \|,\quad \forall (x_1,y_1), (x_2,y_2)
\end{align}
If $f$ is differentiable, we say $f$ is $l_{f,1}$-smooth on if $\nabla f$ is $l_{f,1}$-Lipschitz, i.e. $\quad \forall (x_1,y_1), (x_2,y_2)$:
\begin{align}
    \| [\nabla_x f(x_1,y_1)-\nabla_x f(x_2,y_2);\nabla_y f(x_1,y_1)-\nabla_y f(x_2,y_2)]\|\leq l_{f,1}\|[x_1;y_1]-[x_2;y_2] \|.
\end{align}
\end{definition}

\begin{definition}[PL condition]
We say $g(x,y)$ satisfies \textit{$\mu$-Polyak-Łojasiewicz (PL) condition} in $y$ if
\begin{align}
     \| \nabla_y g(x,y)\|\geq 2\mu(g(x,y)-v(x)).
\end{align}
\end{definition}

\begin{lemma}[{\citep[Theorem 2]{karimi2016linear}}]\label{lemma: equiv of PL EB QG}
If $g(x,y)$ is $\ell_{g,1}$-Lipschitz smooth and PL in $y$ with $\mu_g$, then it satisfies the error bound (EB) condition with $\mu_g$, i.e. 
\begin{align}\label{EB-condition}
    \|\nabla_y g(x,y)\|\geq\mu_g d_{S_g^*(x)}(y).
\end{align}
Moreover, it also satisfies the quadratic growth (QG) condition with $\mu_g$, i.e.
\begin{align}\label{QG-condition}
    g(x,y)-v(x)\geq\frac{\mu_g}{2} d_{S_g^*(x)}(y)^2. 
\end{align}
Conversely, if $g(x,y)$ is $\ell_{g,1}$-Lipschitz smooth and satisfies EB with $\mu_g$, then it is PL in $y$ with $\mu_g/\ell_{g,1}$. 
\end{lemma}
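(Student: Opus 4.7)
The plan is to establish the classical chain of equivalences between PL, EB, and QG for smooth functions, following the strategy of \cite{karimi2016linear}, through three implications: (i) PL $\Rightarrow$ QG, (ii) PL together with QG $\Rightarrow$ EB, and (iii) EB with smoothness $\Rightarrow$ PL. Since this is essentially a restatement of a cited theorem, the proof reduces to reproducing these three arguments with the paper's notation and constants.

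For PL $\Rightarrow$ QG, I would consider the continuous-time gradient flow $\dot y(t) = -\nabla_y g(x, y(t))$ with $y(0) = y$. Along this trajectory $\tfrac{d}{dt}(g(x,y(t)) - v(x)) = -\|\nabla_y g(x,y(t))\|^2$, which together with PL yields exponential decay of the function gap. Dividing by $2\sqrt{g(x,y(t))-v(x)}$ and using PL a second time in the form $\|\nabla_y g\|\geq\sqrt{2\mu_g(g-v)}$ gives $\tfrac{d}{dt}\sqrt{g(x,y(t))-v(x)} \leq -\sqrt{\mu_g/2}\,\|\dot y(t)\|$. Integrating on $[0,\infty)$ upper bounds the arclength $\int_0^\infty\|\dot y(t)\|\,dt$ by $\sqrt{2(g(x,y)-v(x))/\mu_g}$; since the arclength dominates $d_{S_g^*(x)}(y)$, rearrangement yields QG with constant $\mu_g$.

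For PL $+$ QG $\Rightarrow$ EB, I would simply chain $\|\nabla_y g\|^2 \geq 2\mu_g(g - v) \geq \mu_g^2\,d_{S_g^*(x)}(y)^2$ and take square roots. For the converse EB $+$ smoothness $\Rightarrow$ PL, I would invoke the descent lemma at the projection $y^\star \in S_g^*(x)$ (using $\nabla_y g(x,y^\star) = 0$ and $\ell_{g,1}$-smoothness) to obtain $g(x,y) - v(x) \leq (\ell_{g,1}/2)\,d_{S_g^*(x)}(y)^2$, then combine with EB to get $\|\nabla_y g\|^2 \geq (2\mu_g^2/\ell_{g,1})(g - v)$, which is PL.

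The main obstacle is bookkeeping the constants precisely. The PL definition in Appendix A is stated as $\|\nabla_y g\|\geq 2\mu(g-v)$, where the exponent on the gradient norm appears to be a typo for the standard $\|\nabla_y g\|^2$; once this is reconciled, the converse route above yields PL with constant proportional to $\mu_g^2/\ell_{g,1}$ rather than the $\mu_g/\ell_{g,1}$ asserted, so a careful normalization, matching that of \cite[Theorem 2]{karimi2016linear}, is needed to recover the stated constant. Apart from this normalization check, every step is a short computation using only smoothness, the PL/EB/QG inequalities, and the elementary fact that $d_{S_g^*(x)}(y)$ is bounded by the gradient-flow arclength.
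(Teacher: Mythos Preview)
The paper does not provide its own proof of this lemma: it is stated in the preliminaries with a direct citation to \cite[Theorem 2]{karimi2016linear} and is used as a black box throughout. Your proposal correctly reproduces the standard argument from that reference (gradient-flow arclength for PL $\Rightarrow$ QG, chaining for EB, and the descent lemma for the converse), so there is nothing to compare against in the paper itself. Your observation about the constant in the converse direction is well taken---the route you outline naturally produces a PL constant $\mu_g^2/\ell_{g,1}$ rather than $\mu_g/\ell_{g,1}$, and the discrepancy (as well as the apparent missing square in the paper's PL definition) reflects normalization choices in the cited source rather than a flaw in your argument; since the paper only ever uses the forward implications (PL $\Rightarrow$ EB and PL $\Rightarrow$ QG) in its analysis, this constant does not affect any downstream result.
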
 

\begin{proposition}[Complete version to Proposition \ref{lemma: distance of yg ygam} \cite{shen2023penalty,kwon2023penalty}]
\label{prop: Complete version distance of yg ygam}
Under Assumption \ref{ass: UL Lipschitz}--\ref{assumption: standard assumption}, for any $x$, there is
\begin{align}
    \|F_\gamma(x)-\phi(x)\|\leq & \Oc(\|\nabla_y f(x,y_g^*(x))\|^2 \mu^{-1}\gamma^{-1}) = \Oc(l_{f,0}^2 \mu^{-1}\gamma^{-1}).
\end{align}
Additionally, for any $ y_g^*(x)\in S_g^*(x)$, $ y_\gamma^*(x)\in S_\gamma^*(x)$, 
\begin{align}
    d_{S_\gamma^*(x)}(y_g^*(x)) ,d_{S_g^*(x)}(y_\gamma^*(x)) \leq & \Omega(\|\nabla_y f(x,y_g^*(x))\|\mu^{-1} \gamma^{-1}) =  \Omega(l_{f,0}\mu^{-1}\gamma^{-1}).
\end{align}
Moreover, for $y_g^*(x)=\arg\min_{z\in S_g^*(x)} f(x,z)$, there is
\begin{align}
    \|\nabla \phi(x)-\nabla F_\gamma(x)\|=& \Oc\left(d_{S_\gamma^*(x)}(y_g^*(x)) \right)  =  \Oc(l_{f,0}\mu^{-1}\gamma^{-1}). \nonumber
\end{align}
\end{proposition}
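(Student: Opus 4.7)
The proposition consists of three bounds: a function-value gap, distances between selections of the two LL solution sets, and a gradient approximation. I would prove them in that order, since each subsequent bound builds on the previous one.

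\textbf{Function-value gap.} My plan is to apply the PL condition from Assumption~\ref{assumption: standard assumption}(iii) to the function $h(x,y) := \gamma^{-1}f(x,y) + g(x,y)$, which is $\mu$-PL in $y$ whenever $\gamma \geq \gamma^*$. First I would observe that $\min_y h(x,y) = \gamma^{-1}F_\gamma(x) + v(x)$ (by adding and subtracting $v(x)$ in the definition of $\tilde F_\gamma$), while $h(x,y_g^*(x)) = \gamma^{-1}\phi(x) + v(x)$ for $y_g^*(x)\in S_g^*(x)$ chosen as the UL-minimizer over $S_g^*(x)$. Evaluating the PL inequality at $y_g^*(x)$ and using $\nabla_y g(x,y_g^*(x))=0$ gives
\begin{equation*}
\gamma^{-2}\|\nabla_y f(x,y_g^*(x))\|^2 \;=\; \|\nabla_y h(x,y_g^*(x))\|^2 \;\geq\; 2\mu\bigl(h(x,y_g^*(x)) - \min_y h(x,y)\bigr) \;=\; 2\mu\gamma^{-1}(\phi(x)-F_\gamma(x)),
\end{equation*}
which rearranges to $\phi(x)-F_\gamma(x) \leq \|\nabla_y f(x,y_g^*(x))\|^2/(2\mu\gamma)$. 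The reverse direction $F_\gamma(x)\leq \phi(x)$ is immediate from $\tilde F_\gamma(x,y_g^*(x)) = f(x,y_g^*(x))$ and optimality of $y_\gamma^*(x)$. Using $\|\nabla_y f(x,y_g^*(x))\|\leq l_{f,0}$ yields the stated $\mathcal{O}(l_{f,0}^2\mu^{-1}\gamma^{-1})$ bound.

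\textbf{Distances.} For $d_{S_\gamma^*(x)}(y_g^*(x))$, I would invoke the QG equivalence from Lemma~\ref{lemma: equiv of PL EB QG} applied to $h$, so that
\begin{equation*}
\tfrac{\mu}{2}\,d_{S_\gamma^*(x)}(y_g^*(x))^2 \;\leq\; h(x,y_g^*(x)) - \min_y h(x,y) \;=\; \gamma^{-1}(\phi(x)-F_\gamma(x)),
\end{equation*}
and plug in the previous bound to get $d_{S_\gamma^*(x)}(y_g^*(x)) \leq l_{f,0}/(\mu\gamma)$. For $d_{S_g^*(x)}(y_\gamma^*(x))$, QG on $g$ gives $\tfrac{\mu}{2}d_{S_g^*(x)}(y_\gamma^*(x))^2 \leq g(x,y_\gamma^*(x))-v(x) =: u$. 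To bound $u$, I would compare $\tilde F_\gamma$ at $y_\gamma^*(x)$ and at its projection onto $S_g^*(x)$: optimality yields $\gamma u \leq l_{f,0}\,d_{S_g^*(x)}(y_\gamma^*(x))$. Combining with QG and applying Young's inequality with a carefully chosen parameter decouples the two sides, giving $u = \mathcal{O}(l_{f,0}^2\mu^{-1}\gamma^{-2})$ and hence the $\mathcal{O}(l_{f,0}\mu^{-1}\gamma^{-1})$ bound.

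\textbf{Gradient gap.} This is where I expect the main obstacle. Using \eqref{eq: nabla F gam}, I have
\begin{equation*}
\nabla F_\gamma(x) \;=\; \nabla_x f(x,y_\gamma^*(x)) + \gamma\bigl(\nabla_x g(x,y_\gamma^*(x)) - \nabla_x g(x,y_g^*(x))\bigr).
\end{equation*}
For $\nabla\phi(x)$ under PL (rather than strong convexity), I would select $y_g^*(x) = \arg\min_{z\in S_g^*(x)} f(x,z)$ as specified, and express $\nabla\phi(x)$ via the limiting formula $\nabla\phi(x) = \lim_{\gamma'\to\infty}\nabla F_{\gamma'}(x)$, noting that the $\gamma$-dependent correction term remains $\mathcal{O}(1)$ because $y_\gamma^*(x)$ is within $\mathcal{O}(l_{f,0}\mu^{-1}\gamma^{-1})$ of $S_g^*(x)$ and $\nabla^2 g$ is Lipschitz. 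The key technical step is a second-order Taylor expansion of $\nabla_x g(\cdot,\cdot)$ around $(x,y_g^*(x))$, which after using the Hessian-Lipschitz bound from Assumption~\ref{assumption: standard assumption}(ii) shows that $\|\nabla\phi(x) - \nabla F_\gamma(x)\|$ is controlled by $\|y_\gamma^*(x)-y_g^*(x)\| = d_{S_\gamma^*(x)}(y_g^*(x))$ times constants independent of $\gamma$. Substituting the distance bound from the previous step closes the proof. The subtlety I anticipate is justifying differentiability of $\phi$ at $x$ under only PL (not strong convexity), which requires appealing to the selection structure for $y_g^*(x)$ used in the statement and a sensitivity argument akin to those in \citep{kwon2023penalty,chen2024finding}.
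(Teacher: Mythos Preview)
The paper does not actually prove this proposition: it is stated in the preliminaries appendix and attributed to \cite{shen2023penalty,kwon2023penalty} without a self-contained argument. Your proposal is essentially the standard proof from those references, and it matches the techniques the paper does deploy in the closely related proof of Lemma~\ref{lemma: tighter bound for y gam yg} (PL/QG on $h=\gamma^{-1}f+g$ for the function-value gap and $d_{S_\gamma^*(x)}(y_g^*(x))$, QG on $g$ combined with an optimality comparison for $d_{S_g^*(x)}(y_\gamma^*(x))$).

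Two small remarks. First, for $d_{S_g^*(x)}(y_\gamma^*(x))$ you do not need Young's inequality: combining QG $\tfrac{\mu}{2}d^2\le u$ with $\gamma u\le l_{f,0}d$ gives $d\le 2l_{f,0}/(\mu\gamma)$ directly by dividing through by $d$. Second, the statement's use of $\Omega(\cdot)$ in the distance bound is signaling tightness (cf.\ the remark ``the bound for $\|y_g^*(x)-y_\gamma^*(x)\|$ is tight'' in the abridged Proposition~\ref{lemma: distance of yg ygam}); you have not addressed this, but since the paper also defers tightness to the cited works, this is not a gap relative to the paper's own treatment.
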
 

\subsection{Proof of Proposition \texorpdfstring{\ref{prop: PBGD}}{PBGD-Free under Lipschitzness}}
\label{appendix: proof of F2SA PBGD-Free gap}
First of all, Algorithm \ref{alg: PBGD-Free} can be viewed as a biased PBGD algorithm with bias being
\begin{align*}
    \| b_t\|  = & \| \nabla F_\gamma(x_t)-\nabla_x f(x_t,y_t^\gamma) \| \\
    \stackrel{(a)}{=} & \| \nabla_x f(x_t,y_\gamma^*(x)) + \gamma (\nabla_x g(x_t,y_\gamma^*(x_t))-\nabla_x g(x_t,y_g^*(x_t)))-\nabla_x f(x_t,y_t^\gamma)\| \\
    \stackrel{(b)}{\leq} & \| \nabla_x f(x_t,y_\gamma^*(x)) - \nabla_x f(x_t,y_t^\gamma)\| + \gamma\|\nabla_x g(x_t,y_\gamma^*(x_t))-\nabla_x g(x_t,y_g^*(x_t))\| \\
    \stackrel{(c)}{\leq} & l_{f,1} \|y_\gamma^*(x_t)- y_t^\gamma \| + \gamma l_{g,1}\| y_\gamma^*(x_t)-y_g^*(x_t)\| \\
    \stackrel{(d)}{\leq} & l_{f,1} \sqrt{\gamma^{-1}f(x_t,y_t^\gamma)-v_\gamma(x_t)}+ \Oc(\|  \nabla_y f(x,y_g^*(x_t))\|)\\
    \stackrel{(e)}{\leq} & l_{f,1} \sqrt{(1-\eta^\gamma \mu)^K (\gamma^{-1}f(x_t,y_{t-1}^\gamma)-v_\gamma(x_t)) }+ \Oc(\|  \nabla_y f(x,y_g^*(x_t))\|)
\end{align*}
where (a) is by plugging in $\nabla F_\gamma(x_t)$ in \eqref{eq: nabla F gam}, this holds for arbitrary $y_g^*(x)$, $y_\gamma(x)$ as solutions to problems in \eqref{eq: nabla F gam}; (b) follows triangle-inequality; (c) uses the smoothness of $f$ and $g$; the first term in (d) is obtained by the QG property as ensured by PL condition and smoothness as per Lemma \ref{lemma: equiv of PL EB QG}, via choosing $y_\gamma^*(x_t)=\arg\min_{y\in S_\gamma^*(x_t)} \| y-y_t^\gamma\|$, and the second term follows Proposition \ref{prop: Complete version distance of yg ygam} by choosing $y_g^*(x)=\arg\min_{z\in S_g^*(x_t)} \| y_\gamma^*(x_t)-z\|$; and (e) follows the linear convergence result of PL function \citep{karimi2016linear} as $y_t^\gamma$ is the results from $K$-step inner update starting at $y_{t-1}^\gamma$. In this way, when taking $K$ sufficiently large, there is $\| b_t\|\leq \Oc(\| \nabla_y f(x,y_g^*(x_t))\| = \Oc(l_{f,0})$.

Moreover, according to \citep{chen2024finding}, $F_\gamma(x)$ is $\Oc(1)$-smooth. Therefore,
\begin{align}
    \frac{1}{T}\sum_{t=0}^{T-1} \|\nabla F_\gamma(x_t)\|^2 &\leq  \Oc (T^{-1})+ \sum_{t=0}^{T-1}\|b_t\|^2 \nonumber\\
    &\leq \Oc(T^{-1}) + \Oc\Big(\frac{1}{T}\sum_{t=0}^T \|  \nabla_y f(x,y_g^*(x_t))\|^2\Big)=\Oc (T^{-1})+\Oc(l_{f,0}^2). 
\end{align}
    In this way, for sufficiently large $T$, $\frac{1}{T}\sum_{t=0}^{T-1} \|\nabla F_\gamma(x)\|^2 \leq \Oc(l_{f,0}^2)$. 
    
Next, we will prove the lower bound of Algorithm \ref{alg: PBGD-Free} by constructing a counterexample, and show that the upper bound is tight.
Consider $f(x,y) = x^2+l_{f,0}y$ and $g(x,y)=(y-x+1)^2$. In this problem, $\nabla F_\gamma(x)=2x+l_{f,0}$ while $\nabla_x f(x,y_\gamma^*(x))=2x$. Using fixed stepsize $\eta$, $x_{t+1}=x_t-\eta x_t = (1-\eta)x_t$, implying $\| \nabla_x f(x_{t+1},y_\gamma^*(x_{t+1}))\|=\| 2x_{t+1}\|=2(1-\eta)\|x_t\|=2(1-\eta)^{t+1}\|x_0\|$.
Therefore, for arbitrary small $\epsilon>0$, there exists some $T_0=\Oc(\ln(\epsilon^{-1}))$ such that Algorithm \ref{alg: PBGD-Free} converges to $\|x_t\| < \epsilon$ for all $t\geq T_0$, whereas $\nabla F_\gamma(x_t) = l_{f,0}$. In this way, we have
\begin{align}
    \frac{1}{T-T_0}\sum_{t=T_0}^T \|\nabla F_\gamma(x_t)\|^2  = \Oc(\epsilon)+l_{f,0}^2. 
\end{align}
This indicates $\Omega(l_{f,0}^2)$ is a tight bound.

\section{Improved Analysis under Flatness}

\subsection{\texorpdfstring{Proof of Observation \ref{obs:small_grad_gives_flat}}{Proof of Observation: flatness weaker than small lipschitzness}}
\label{appendix:proof_small_grad_gives_flat}
For $\|y-y_g^*(x)\|\geq 1$, by the $l_{f,0}=\delta^{\frac{1}{\alpha}}$-Lipschitzness of $f(x,y)$ in $y$, there is 
\begin{align}
    \| f(x,y)-f(x,y_g^*(x))\|\leq l_{f,0}\|y-y_g^*(x)\| \leq \delta^{\frac{1}{\alpha}}\|y-y_g^*(x)\| ^\alpha. 
\end{align}
For small $\|y-y_g^*(x)\|< 1$, Taylor's expansion gives
\begin{align}
    f(x,y)-f(x,y_g^*(x))= \langle \nabla_y f(x,y_g^*(x)), y-y_g^*(x)\rangle+ R(x,y). 
\end{align}
Here, $R(x,y)$ is a remainder. By Hölder-Continuous Gradient Condition \citep{berger2020quality}, \citep[Section 2]{nesterov2013introductory}, which is implied by the smoothness, there exists some $0\leq c\leq l_{f,1}/2$, $1<\alpha < 2$ such that $\| R(x,y)\|\leq c\|y-y_g^*(x)\|^{\alpha}$. By Cauchy-Schwartz's inequality, there is 
\begin{align}
    \|\langle \nabla_y f(x,y_g^*(x)), y-y_g^*(x)\rangle\|\leq & \| \nabla_y f(x,y_g^*(x))\| \| y-y_g^*(x)\| \nonumber \\
    \leq & \delta^{\frac{1}{\alpha}}\|y-y_g^*(x)\| \nonumber\\
    \leq & \delta +\|y-y_g^*(x)\|^\alpha
\end{align}
where the last inequality holds as for $a,b\in (0,1)$ and $\alpha\in(1,2)$, there is $ab\leq \max\{a,b\}^2\leq \max\{a,b\}^\alpha \leq a^\alpha +b^\alpha$ and here $a=\delta^{1/\alpha}$ and $b=\| y-y_g^*(x)\|$.
The observation therefore, holds.

\subsection{\texorpdfstring{Detailed example for Observation \ref{obs: local abrupt change}}{Detailed example for Observation: flatness weaker than small lipschitzness}}\label{sec:ob2_example}

The following example visualizes Observation \ref{obs: local abrupt change}. 

\begin{example}
\label{example:toy_examplE_3}
We consider the LL objective $g(x,y)=(y-x)^2$ and the UL objective 
\vspace{-0.1cm}
\begin{align*}
f(x, y) = &  
\left( \sin(y - x) + 2 \right) |y - x|^2 + 10 \exp\left( -\frac{(y - x)^2}{2 (0.005)^2} \right) \sin\left(100(y - x)\right).
\end{align*}
The LL problem $g(x,y)$ is strongly convex in $y$, with $y_g^*(x) = x$. 
Therefore, $\nabla_y f(x, y_g^*(x))=1000$ is extremely large, which leads to a loose upper bound for $\|y_g^*(x)-y_\gamma^*(x)\|$ or $\|\phi(x)-F_\gamma(x)\|$ following Lemma \ref{lemma: distance of yg ygam}. However,
this problem is $(3e^{-3}, 1.1)$-flat with $c = 5$ at $y_g^*(x)=x$ for $x\in[-10,10]$. 
As shown in Figure \ref{fig:flatness_ex2}, $f(x, \cdot)$ exhibits fluctuations around $y_g^*(x)$ while remaining relatively stable elsewhere. This shows that flatness is weaker than requiring small $\|\nabla_y f(x, y)\|$.
\end{example}

\subsection{Proof of Lemma \ref{lemma: tighter bound for y gam yg}}
\label{appendix: proof of tighter bound for yF yg}

\begin{proof} 
For any $y_g^*(x)\in S_g^*(x)$, $y_\gamma^*(x)\in S_\gamma^*(x)$, there is
\begin{align}
    \gamma^{-1}f(x,y_\gamma^*(x))+g(x,y_\gamma^*(x)) \leq & \gamma^{-1}f(x,y_g^*(x))+g(x,y_g^*(x)) \nonumber \\
    \Rightarrow \quad \gamma^{-1}f(x,y_\gamma^*(x))+g(x,y_\gamma^*(x))-v(x) \leq & \gamma^{-1}f(x,y_g^*(x))+g(x,y_g^*(x))-v(x) \nonumber \\
    \Rightarrow \quad \gamma^{-1}f(x,y_\gamma^*(x))+g(x,y_\gamma^*(x))-v(x) \leq & \gamma^{-1}f(x,y_g^*(x)) \nonumber \\
    \Rightarrow \quad f(x,y_\gamma^*(x))\leq & f(x,y_g^*(x)). \label{eq: f x y gam small}
\end{align}
In this way, according to the definition of $\phi(x)$ and $F_\gamma(x)$, we have 
\begin{align}
    \|\phi (x)-F_\gamma(x)\|= & \min_{z\in S_g^*(x)}f(x,z)-\left(f(x,y_\gamma ^*(x))+\gamma(g(x,y_\gamma ^*(x))-v(x))\right) \nonumber \\
     \stackrel{(a)}{\leq} & f(x,y_g^*(x))-\left(f(x,y_\gamma ^*(x))+\gamma(g(x,y_\gamma ^*(x))-v(x))\right) \nonumber \\
    \stackrel{(b)}{\leq} & f(x,y_g^*(x))-\left(f(x,y_\gamma ^*(x))+\gamma\frac{\mu_g}{2}\|y_g^*(x) -y_\gamma^*(x) \|^2\right) \nonumber \\
    \stackrel{(c)}{=}  &\|f(x,y_g^*(x))-f(x,y_\gamma ^*(x))\|-\gamma\frac{\mu_g}{2}\|y_g^*(x) -y_\gamma^*(x) \|^2 \nonumber\\
    \stackrel{(d)}{\leq} & c\| y_g^*(x) -y_\gamma^*(x) \|^\alpha +\delta-\gamma\frac{\mu}{2}\|y_g^*(x) -y_\gamma^*(x) \|^2 \nonumber \\
    \stackrel{(e)}{\leq} & \max_{z:z\geq0} c z^\alpha -\gamma\frac{\mu}{2} z^2 \nonumber +\delta \\
    \stackrel{(f)}{=} & c^{\frac{2}{2-\alpha}}(2\alpha)^{\frac{\alpha}{2-\alpha}}(1-\alpha/2)(\mu\gamma)^{-\frac{\alpha}{2-\alpha}}+\delta = {\cal O}(\gamma^{-\frac{\alpha}{2-\alpha}}+\delta) \label{eq: first part of tighter bound proof}
\end{align}
Here, (a) holds for arbitrary $y_g^*(x)\in S_g^*(x), y_\gamma^*(x)\in S_\gamma^*(x)$ by \eqref{eq: f x y gam small}; 
(b) is from the $\mu_g$ quadratic growth condition of $g(x,\cdot)$ which is implied by $\mu_g$-PL according to Lemma \ref{lemma: equiv of PL EB QG}, via choosing $y_g^*(x)=\arg\min_{z\in S_g^*(x)}\|z-y_\gamma^*(x)\|$; (c) again uses \eqref{eq: f x y gam small}; (d) follows the flatness of $f(x,y)$ at $y_g^*(x)$, (e) is by formulating the problem as a maximization problem over $z=\| y_g^*(x) -y_\gamma^*(x) \|$; and (f) is the solution to this polynomial problem. Therefore, the first part is proved.

For the second part, as $\frac{1}{\gamma}f(x,\cdot)+g(x,\cdot)$ being $\mu$-PL for $\gamma>\gamma^*$, it is also $\mu$-QG by Lemma \ref{lemma: equiv of PL EB QG}. In this way, fixed any $\gamma>\gamma^*$, for any $y_\gamma^*(x) \in S_\gamma^*(x)$ and any $y_g^*(x) \in S_g^*(x)$, there is
\begin{align}
     &\gamma\left(  \left( \frac{1}{\gamma}f(x,y_g^*(x))+g(x,y_g^*(x)) \right) -\left(\frac{1}{\gamma}f(x,y_\gamma ^*(x))+g(x,y_\gamma ^*(x))\right)\right)
    \geq  \gamma \frac{\mu}{2} d_{S_\gamma^*(x)}^2(y_g^*(x)). \label{eq: QG of H}
\end{align}
Moreover, following steps (a)-(d) as in \eqref{eq: first part of tighter bound proof}, there is
\begin{align}
    \text{left of \eqref{eq: QG of H}}= & \left( f(x,y_g^*(x))+\gamma g(x,y_g^*(x)) - \gamma v(x) \right) - \left( f(x,y_\gamma ^*(x))+\gamma g(x,y_\gamma ^*(x)) - \gamma v(x) \right) \nonumber \\
     =& f(x,y_g^*(x))-f(x,y_\gamma ^*(x))-\gamma \big(g(x,y_\gamma ^*(x))-v(x)\big) \nonumber \nonumber\\
     \leq & c\| y_g^*(x) -y_\gamma^*(x) \|^\alpha +\delta-\gamma\frac{\mu}{2} d_{S_\gamma^*(x)}^2(y_\gamma^*(x)) 
\end{align}
Combining \eqref{eq: QG of H} and the above, there is
\begin{align}
    c\| y_g^*(x) -y_\gamma^*(x) \|^\alpha +\delta-\gamma\frac{\mu}{2} d_{S_g^*(x)}^2(y_\gamma^*(x)) \geq \gamma \frac{\mu}{2} d_{S_\gamma^*(x)}^2(y_g^*(x))
\end{align}
for any $y_g^*(x)\in S_g^*(x)$ and $y_\gamma^*(x)\in S_\gamma^*(x)$. In this way, for any $y_g^*(x)\in S_g^*(x)$, choose $y_\gamma^*(x)=\arg\min_{y\in S_\gamma^*(x)} \|y-y_g^*(x)\|$, there is
\begin{align}
    cd_{S_\gamma^*(x)}^\alpha (y_g^*(x)) +\delta \geq & \gamma \frac{\mu}{2} d_{S_\gamma^*(x)}^2(y_g^*(x))
\end{align}
Similarly, for any $y_\gamma^*(x)\in S_\gamma^*(x)$, choose $y_g^*(x)=\arg\min_{z\in S_g^*(x)} \|z-y_g^*(x)\|$, there is
\begin{align}
    cd_{S_g^*(x)}^\alpha (y_\gamma^*(x)) +\delta \geq & \gamma \frac{\mu}{2} d_{S_g^*(x)}^2 (y_\gamma^*(x)).
\end{align}
For simplicity, denote $x=d_{S_g^*(x)} (y_\gamma^*(x))$ (or $x=d_{S_\gamma^*(x)} (y_g^*(x))$), there is
\begin{align}
    \quad x^{2-\alpha} \leq & 2c {\mu}^{-1}\gamma^{-1} + 2\delta {\mu}^{-1}\gamma^{-1} x^{-\alpha}.
\end{align}

As $\alpha\in(1,1.5]$, for $x \geq \sqrt{\frac{\delta}{\gamma}}$,
\begin{align}
x^{2-\alpha} \leq & 2c {\mu}^{-1}\gamma^{-1} + 2\delta {\mu}^{-1}\gamma^{-1} \left(\frac{\delta}{\gamma} \right)^{-\frac{\alpha}{2}}.
\end{align}

Since $| a+b|^p \leq 2^{p-1}(|a|^p+|b|^p)$ for all $p\geq 1$ (as $|\cdot|^p$ is convex), there is
\begin{align}
x =&  (x^{2-\alpha})^{\frac{1}{2-\alpha}}\leq  \left(2c {\mu}^{-1}\gamma^{-1} + 2\delta {\mu}^{-1}\gamma^{-1} \left(\frac{\delta}{\gamma} \right)^{-\frac{\alpha}{2}} \right)^{\frac{1}{2-\alpha}} \nonumber\\
\leq & 2^{\frac{1}{2-\alpha}-1} \left((2 c\mu^{-1})^{\frac{1}{2-\alpha}} \gamma^{-\frac{1}{2-\alpha}} + (2\mu^{-1})^{\frac{1}{2-\alpha}}\delta^{\frac{1}{2}}\gamma^{-\frac{1}{2}}\right)=Oc(\gamma^{-\frac{1}{2-\alpha}}+\delta^{\frac{1}{2}}\gamma^{-\frac{1}{2}})
     \label{eq: distance upper bound 2 terms}
\end{align}

In this way, we can conclude the following to include the scenario that $x \leq \sqrt{\frac{\delta}{\gamma}}$.
\begin{align}
    x =\Oc(\gamma^{-\frac{1}{2-\alpha}}+\delta^{\frac{1}{2}}\gamma^{-\frac{1}{2}}).
\end{align}

\end{proof}

\begin{lemma}
    Under the same assumptions as in Lemma \ref{lemma: tighter bound for y gam yg}, $\|\phi(x)-F_\gamma(x)\| ={\cal O}(\gamma^{-\frac{\alpha}{2-\alpha}} +\delta)$
\end{lemma}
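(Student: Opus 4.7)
The plan is to observe that this statement is essentially the first half of Lemma \ref{lemma: tighter bound for y gam yg}, so the argument can mirror the chain of inequalities established there. First, I would drop the absolute value by observing that $F_\gamma(x) \leq \phi(x)$ always holds: for any $y \in S_g^*(x)$ we have $g(x,y) - v(x) = 0$, so $\tilde F_\gamma(x,y) = f(x,y)$, and minimizing $\tilde F_\gamma(x,\cdot)$ over all $y$ can only yield something at most $\min_{y \in S_g^*(x)} f(x,y) = \phi(x)$. Hence $\|\phi(x) - F_\gamma(x)\| = \phi(x) - F_\gamma(x) \geq 0$, and it suffices to upper bound the signed quantity.

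Next, I would pick an arbitrary $y_\gamma^*(x) \in S_\gamma^*(x)$ and set $y_g^*(x) = \arg\min_{z \in S_g^*(x)} \|z - y_\gamma^*(x)\|$. Since $\phi(x) \leq f(x, y_g^*(x))$, this yields
\begin{align*}
\phi(x) - F_\gamma(x) \leq f(x, y_g^*(x)) - f(x, y_\gamma^*(x)) - \gamma\bigl(g(x, y_\gamma^*(x)) - v(x)\bigr).
\end{align*}
The QG consequence of the $\mu_g$-PL condition on $g(x,\cdot)$ (Lemma \ref{lemma: equiv of PL EB QG}) gives $g(x, y_\gamma^*(x)) - v(x) \geq \tfrac{\mu_g}{2}\|y_\gamma^*(x) - y_g^*(x)\|^2$ by the choice of $y_g^*(x)$, while the $(\delta,\alpha)$-flatness of $f(x,\cdot)$ at $y_g^*(x)$ combined with \eqref{eq: f x y gam small} (which ensures the difference is non-negative) gives $f(x, y_g^*(x)) - f(x, y_\gamma^*(x)) \leq c\|y_g^*(x) - y_\gamma^*(x)\|^\alpha + \delta$.

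Finally, writing $z := \|y_g^*(x) - y_\gamma^*(x)\| \geq 0$, these two bounds produce the univariate inequality
\begin{align*}
\phi(x) - F_\gamma(x) \leq c z^\alpha - \tfrac{\gamma \mu_g}{2} z^2 + \delta \leq \max_{z \geq 0}\,\bigl\{c z^\alpha - \tfrac{\gamma \mu_g}{2} z^2\bigr\} + \delta.
\end{align*}
Since $\alpha \in (1,2)$ and $c \geq 0$, elementary calculus yields the maximizer $z^\star = (\alpha c / (\gamma \mu_g))^{1/(2-\alpha)}$ with optimal value of order $\gamma^{-\alpha/(2-\alpha)}$ (with constants depending only on $c,\alpha,\mu_g$), exactly as in step (f) of the proof of Lemma \ref{lemma: tighter bound for y gam yg}. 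Adding $\delta$ gives the claimed bound. There is no genuine obstacle here beyond bookkeeping — the real work (the interpolation between flatness and QG via polynomial maximization) is already done inside \eqref{eq: first part of tighter bound proof}; the lemma simply isolates the function-value-gap estimate from the accompanying distance estimate.
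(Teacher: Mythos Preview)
Your proposal is correct and follows essentially the same approach as the paper: the paper's proof simply invokes the chain of inequalities \eqref{eq: first part of tighter bound proof} from the proof of Lemma~\ref{lemma: tighter bound for y gam yg} (drop the sign via \eqref{eq: f x y gam small}, apply QG from PL to the penalty term, apply $(\delta,\alpha)$-flatness to the $f$-difference, then maximize the resulting polynomial in $z$), which is exactly what you spell out.
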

\begin{proof}
    Following similar analysis as in \eqref{eq: first part of tighter bound proof},
    \begin{align}
    \|\phi (x)-F_\gamma(x)\| \leq &\|f(x,y_g^*(x))-f(x,y_\gamma ^*(x))\|-\gamma\frac{\mu}{2}\|y_g^*(x) -y_\gamma^*(x) \|^2 \\
    \leq & c\| y_g^*(x) -y_\gamma^*(x) \|^\alpha +\delta -\gamma\frac{\mu}{2}\|y_g^*(x) -y_\gamma^*(x) \|^2 \\
    \leq & \max_{z\geq 0}  cz^\alpha  -\gamma\frac{\mu}{2}z^2 +\delta\\
    = & \alpha^{-\frac{\alpha}{\alpha-2}}c^{\frac{-2}{\alpha-2}}(1-\alpha/2) (\gamma\frac{\mu}{2})^{-\frac{\alpha}{2-\alpha}} +\delta
\end{align}
\end{proof}

\subsection{Proof of Lemma \ref{lemma: Lipschitz delta x}}
\label{appendix: proof of Lipschitz delta x}

Define
\begin{align}
    \delta'(x):= & |f(x,y_g^*(x))-f(x,y_\gamma^*(x))|-c\| y_g^*(x)-y_\gamma^*(x)\|^\alpha \nonumber \\
    = & f(x,y_g^*(x))-f(x,y_\gamma^*(x))-c\| y_g^*(x)-y_\gamma^*(x)\|^\alpha.
\end{align}
where the equality is from $f(x,y_g^*(x))\geq f(x,y_\gamma^*(x))$ as per \eqref{eq: f x y gam small}.
We firstly show that $f(x,y_g^*(x))-f(x,y_\gamma^*(x))$ and $c\| y_g^*(x)-y_\gamma^*(x)\|^\alpha$ are both Lipschitz continuous.

For $f(x,y_g^*(x))-f(x,y_\gamma^*(x))$, according to \citep[Lemma F.3]{chen2024finding}, there is
\begin{align}
    & \Big\| \frac{\partial}{\partial x}\left[ f(x,y_g^*(x))-f(x,y_\gamma^*(x)) \Big] \right\| \nonumber \\
    =& \big\| \nabla_x f(x,y_g^*(x))-\nabla_x f(x,y_\gamma^*(x)) + \nabla_y f(x,y_g^*(x)) \nabla_{yy} g(x,y_g^*(x))^{\dagger}\nabla_{yx} g(x,y_g^*(x)) \nonumber \\
    &-\nabla_y f(x,y_\gamma^*(x)) \left[ 
    \gamma^{-1}\nabla_{yy}f(x,y_\gamma^*(x))+\nabla_{yy} g(x,y_\gamma^*(x))\right]^{\dagger} \nonumber \\
    & ~~~~ \times \left[ 
    \gamma^{-1}\nabla_{yx}f(x,y_\gamma^*(x))+\nabla_{yx} g(x,y_\gamma^*(x))\right]\big\| \nonumber \\
    \leq & \big\| E_1 \|  + \|\nabla_y f(x,y_g^*(x)) \nabla_{yy} g(x,y_g^*(x))^{\dagger}\nabla_{yx} g(x,y_g^*(x))  \nonumber\\
    & ~~~~ -[\nabla_y f(x,y_g^*(x))+E_2][ \nabla_{yy} g(x,y_g^*(x))+E_3]^{\dagger}[\nabla_{yx} g(x,y_g^*(x))+E_4] \| \label{eq: upper bound for f x yg minus f x ygam}
\end{align}
where the inequality is by the triangle inequality and by denoting
\begin{align}
\begin{cases}
    E_1 = & \nabla_x f(x,y_\gamma^*(x)) -\nabla_x f(x,y_g^*(x))\\
    E_2 = & \nabla_y f(x,y_\gamma^*(x)) -\nabla_y f(x,y_g^*(x))\\
    E_3 = & \gamma^{-1}\nabla_{yy}f(x,y_\gamma^*(x))+\nabla_{yy} g(x,y_\gamma^*(x)) - \nabla_{yy} g(x,y_g^*(x))\\
    E_4 = & \gamma^{-1}\nabla_{yx}f(x,y_\gamma^*(x))+\nabla_{yx} g(x,y_\gamma^*(x)) - \nabla_{yx} g(x,y_g^*(x))
\end{cases}
\end{align}
By the smoothness of $f$, the Lipschitzness of $\nabla^2 g$ and by Proposition \ref{lemma: distance of yg ygam}, we know that 
\begin{align}
    \| E_1\|,\| E_2\|,\| E_3\|,\| E_4\| = \Oc(\gamma^{-1}).
\end{align}
Additionally, according to \citep{wedin1973perturbation}, we know
\begin{align}
    \|[ \nabla_{yy} g(x,y_g^*(x))+E_3]^{\dagger}- \nabla_{yy} g(x,y_g^*(x))^{\dagger}\|\leq \frac{1+\sqrt{5}}{\mu}\|E_3\|=\Oc(\gamma^{-1}).
\end{align}
In this way, by the smoothness of $f$ and $g$, we know $\|\gamma^{-1}\nabla^2f+\nabla^2 g\| \leq \gamma^{-1}l_{f,1}+l_{g,1}$ and therefore,
\begin{align}
     \Big\| \frac{\partial}{\partial x}\left[ f(x,y_g^*(x))-f(x,y_\gamma^*(x)) \Big] \right\|\leq \text{\eqref{eq: upper bound for f x yg minus f x ygam}} \leq \Oc(\gamma^{-1}).
\end{align}
This shows that $f(x,y_g^*(x))-f(x,y_\gamma^*(x))$ is Lipschitz-continuous. 

Fix any $x$, denote arbitrary $y_g^*(x)\in S_g^*(x)$, $y_\gamma^*(x)\in S_\gamma^*(x)$, then for any $x'\in \mathcal{X}$, there exists some $y_g^*(x')\in S_g^*(x')$, $y_\gamma^*(x')\in S_\gamma^*(x')$ such that
\begin{align}
    & \big| c\| y_g^*(x)-y_\gamma^*(x)\|^\alpha - c\| y_g^*(x')-y_\gamma^*(x')\|^\alpha \big| \nonumber \\ 
    \stackrel{(a)}{\leq} & c \max_{z \in [ \| y_g^*(x)-y_\gamma^*(x)\|, \| y_g^*(x')-y_\gamma^*(x')\|]} z^{\alpha-1} \big|  \| y_g^*(x)-y_\gamma^*(x)\| - \| y_g^*(x')-y_\gamma^*(x')\| \big| \nonumber \\
    \stackrel{(b)}{\leq} & \Oc(c\gamma^{-(\alpha-1)}) \big \| \big(y_g^*(x)-y_\gamma^*(x) \big) -  \big(y_g^*(x')-y_\gamma^*(x') \big) \big\| \nonumber \\
    \stackrel{(c)}{\leq} & \Oc(c\gamma^{-(\alpha-1)}) \big( \| y_g^*(x)-y_g^*(x')\| +  \|y_\gamma^*(x)-y_\gamma^*(x') \| \big) \nonumber \\
    \stackrel{(d)}{=} & \Oc(c\gamma^{-(\alpha-1)}) \| x-x'\|
\end{align}
where (a) follows the mean value theorem, as $|\cdot|^\alpha$ is continuous; (b) is from $\| y_g^*(x)-y_\gamma^*(x)\|=\Oc(\gamma^{-1})$, and the $1$-Lipschitzness of the norm function; (c) uses triangle-inequality; and (d)
is achieved by knowing that $y_g^*(x)$ and $y_\gamma^*(x)$ are, respectively, $L_g$, $L_\gamma$-Lipschitz for some constant $L_g$, $L_\gamma$ \citep{shen2023penalty}. 

In this way, we can conclude that $\delta'(x)$ is $\Oc(c\gamma^{-(\alpha-1)})$ Lipschitz continuous. As $\delta(x)$ is a ReLu function works on $\delta'(x)$, it is also $\Oc(c\gamma^{-(\alpha-1)})$ Lipschitz continuous.

\subsection{Stationary relations under flatness condition}
\label{sec:stationary_relations}

Similar to \citep{xiao2023generalized}, we first derive the stationary conditions for the original BLO problem \eqref{eq: original problem 1}, under the flatness condition in Definition \ref{def: flatness} instead of the Lipschitz continuity. Then we prove the stationary equivalence of the penalty problem with the original BLO problem \eqref{eq: original problem 1}. 

\subsubsection{Stationary conditions for original BLO problem \texorpdfstring{\eqref{eq: original problem 1}}{}}

First, the original BLO problem can be equivalently written as its gradient based constrained form under LL PL condition as follows~\citep{ye2022bome,xiao2023generalized}. 
\begin{align}\label{opt1}
\min_{x,y}~ f(x,y), \quad \text{ s.t. } \quad \nabla_y g(x,y)=0. 
\end{align}
We aim to show that the Karush–Kuhn–Tucker conditions (KKT) conditions of \eqref{opt1} are necessary for the global optimality of the original BLO problem \eqref{eq: original problem 1}, thereby serving as its stationary conditions. Prior works~\citep{ye1997exact,Ye1999_cq_eb_blo} have discussed that under the calmness condition, the KKT condition is a necessary optimality condition. Similar to \citep{ye1997exact,Ye1999_cq_eb_blo,xiao2023generalized,chen2025foops}, we will prove that the calmness condition holds for \eqref{opt1} for our problem, even under our relaxed assumptions, so that KKT conditions, which our proposed algorithm converges to, are necessary for the optimality in the original BLO problem \eqref{eq: original problem 1}. 
Notably, the key difference is that the prior works require a global~\citep{xiao2023generalized} or local~\cite{chen2025foops} Lipschitz condition of the upper-level objective, while we prove this under a more relaxed flatness condition of the upper-level objective. This makes the result applicable to a much wider set of problems.
We first review the definition of the calmness condition below. 

\begin{definition}[\bf{Calmness}, {\citep[Definition 6.4.1]{clarke1990optimization}}]
Let $(x^*,y^*)$ be the global minimizer of the constrained problem
\begin{align}\label{calm-prob}
\min_{x,y}~ f(x,y)~~~~{\rm s.t.}~~ f_c(x,y)=0. 
\end{align}
where $f_c:\mathbb{R}^{d_x+d_y}\rightarrow\mathbb{R}^{d}$ and $d\geq 1$. If there exist positive $\epsilon$ and $M$ such that for any $q\in\mathbb{R}^d$ with $\|q\|\leq\epsilon$ and any $\|(x^\prime,y^\prime)-(x^*,y^*)\|\leq \epsilon$ which satisfies $f_c(x^\prime,y^\prime)+q=0$, one has
\begin{equation}\label{eq.clam}
    f(x^\prime,y^\prime)-f(x^*,y^*)+M\|q\|\geq 0 
\end{equation}
then the problem \eqref{calm-prob} is said to be calm with $M$ and $\epsilon$. 
\end{definition}

We will prove a general version for establishing that the KKT conditions of problem \eqref{opt1} serve as the stationary conditions of the BLO problem \eqref{eq: original problem 1}, which only requires the UL objective to be continuously differentiable. 

\begin{lemma}\label{lm:PLcalm}
Suppose that $g(x,\cdot)$ satisfies the PL condition and is smooth, and $f(x,\cdot)$ is continuously differentiable. For the global minimizer $(x^*,y^*)$ of BLO problem in \eqref{eq: original problem 1} (a.k.a \eqref{opt1}), then \eqref{opt1} is calm at its global minimizer $(x^*,y^*)$, and therefore, the KKT conditions of \eqref{opt1} are the necessary conditions of the global optimality in \eqref{eq: original problem 1}. 
\end{lemma}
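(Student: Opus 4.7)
The plan is to verify the calmness property directly at the global minimizer $(x^*,y^*)$ of \eqref{opt1}, and then invoke the classical result of Clarke (\citep[Thm.~6.1.1]{clarke1990optimization}) which turns calmness into a KKT-type necessary optimality condition. The key structural observation is that the PL condition of $g(x,\cdot)$ converts the first-order-like constraint $\nabla_y g(x,y)=0$ into an exact feasibility statement for the original BLO problem \eqref{eq: original problem 1}: by Lemma~\ref{lemma: equiv of PL EB QG}, $\nabla_y g(x,y)=0$ implies $y\in S_g^*(x)$, so the feasible sets of \eqref{opt1} and \eqref{eq: original problem 1} coincide, and $(x^*,y^*)$ is a global minimizer for both.

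The main step is to bound the effect of a small perturbation $q$ on the objective. Given any $(x',y')$ close to $(x^*,y^*)$ with $\nabla_y g(x',y')+q=0$, the error-bound inequality \eqref{EB-condition} yields
\begin{equation*}
\mu_g\, d_{S_g^*(x')}(y')\;\leq\;\|\nabla_y g(x',y')\|\;=\;\|q\|,
\end{equation*}
so there exists $\hat y\in S_g^*(x')$ with $\|\hat y-y'\|\leq \|q\|/\mu_g$. Since $\hat y\in S_g^*(x')$, the pair $(x',\hat y)$ is feasible for the original BLO problem \eqref{eq: original problem 1}, hence $f(x^*,y^*)\leq f(x',\hat y)$ by global optimality. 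Chaining these gives
\begin{equation*}
f(x',y')-f(x^*,y^*)\;\geq\;f(x',y')-f(x',\hat y).
\end{equation*}

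To finish, I would estimate the right-hand side via a local Lipschitz argument on $f(x',\cdot)$. By continuous differentiability, $\nabla_y f$ is continuous on the compact neighborhood $\{(x,y):\|(x,y)-(x^*,y^*)\|\leq 2\epsilon\}$, hence bounded there by some $L$; shrinking $\epsilon$ if necessary ensures both $y'$ and $\hat y$ lie in this ball (the latter because $\|\hat y-y^*\|\leq \|\hat y-y'\|+\|y'-y^*\|\leq \|q\|/\mu_g+\epsilon$). The mean value theorem then yields $|f(x',y')-f(x',\hat y)|\leq L\|y'-\hat y\|\leq L\|q\|/\mu_g$, so
\begin{equation*}
f(x',y')-f(x^*,y^*)+M\|q\|\;\geq\;0\qquad\text{with }M=L/\mu_g,
\end{equation*}
which is exactly \eqref{eq.clam}. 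Calmness of \eqref{opt1} at $(x^*,y^*)$ with $(M,\epsilon)$ then implies the KKT conditions are necessary for global optimality via \citep[Prop.~6.4.4]{clarke1990optimization}.

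The main obstacle I anticipate is twofold: (i) making sure that the auxiliary feasible point $\hat y$ can be chosen in the local neighborhood where $\nabla_y f$ is bounded, which requires a careful tightening of $\epsilon$ relative to the PL constant $\mu_g$ so that $\|q\|/\mu_g$ stays controlled; and (ii) clarifying that the statement needs only continuous differentiability of $f$, not global Lipschitzness, since earlier BLO works \citep{ye1997exact,xiao2023generalized} typically assume the latter. Both points are resolved by the local-compactness argument above, which is the reason I expect PL (through its EB consequence) to be the essential ingredient replacing the usual linear independence or Mangasarian--Fromovitz constraint qualifications.
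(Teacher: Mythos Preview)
Your proposal is correct and follows essentially the same route as the paper: use the PL$\Rightarrow$EB implication to project the perturbed point $(x',y')$ onto $S_g^*(x')$, invoke global optimality of $(x^*,y^*)$ to compare $f$-values, and close with a local Lipschitz bound on $f(x',\cdot)$ coming from continuous differentiability, arriving at $M=L/\mu_g$. Your treatment is actually a bit more careful than the paper's in ensuring that the projected point $\hat y$ stays inside the neighborhood where the local Lipschitz constant $L$ is valid.
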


\begin{proof}
First, for any $x$, since $f(x,\cdot)$ is continuously differentiable, then $f(x,\cdot)$ is locally Lipschitz continuous around any $y^\prime$, i.e. there exists a neighborhood of $\mathbb{B}_\epsilon (y^\prime):=\{y: \|y-y^\prime\|\leq\epsilon\}$ and constant $L:=\max_{y\in\mathbb{B}_\epsilon (y^\prime)}\|\nabla_y f(x,y)\|<\infty$ such that $f(x,\cdot)$ is Lipschitz continuous with constant $L$ over $y\in\mathbb{B}_\epsilon (y^\prime)$. 

Then consider 
    $\forall q$ with $\|q\|\leq \epsilon$, and $\forall x^\prime\in\mathbb{B}_\epsilon (x^*)$ and $y^\prime$, s.t. $\nabla_y g(x^\prime,y^\prime)+q=0$, then letting $y_q\in\operatorname{Proj}_{S_g^*(x^\prime)}(y^\prime)$ and according to Lemma \ref{lemma: equiv of PL EB QG}, one has $$\epsilon\geq \|q\|=\|\nabla_y g(x^\prime,y^\prime)\|\geq\mu_g\|y^\prime-y_q\|$$
    Since $(x^*,y^*)$ solves \eqref{opt1} and $(x^\prime,y_q)$ is also feasible to \eqref{opt1}, one has $f(x^*,y^*)\leq f(x^\prime,y_q)$. Thus,
    \begin{align}\label{f_inequality}
    f(x^\prime,y^\prime)-f(x^*,y^*)\geq f(x^\prime,y^\prime)-f(x^\prime,y_q)\stackrel{(a)}{\geq} -L\|y^\prime -y_q\|\geq -L\|q\| 
\end{align}
where $(a)$ is due to the local Lipschitz continuity of $f(x,\cdot)$ with $L:=\max_{y\in\mathbb{B}_\epsilon (y^\prime)}\|\nabla_y f(x,y)\|$. 

\eqref{f_inequality} justifies the calmness definition in \eqref{eq.clam} with $M:=\frac{L}{\mu_g}$ and $\epsilon$. 
\end{proof}

Therefore, under Assumption \ref{assumption: standard assumption}, the smoothness of $f(x,\cdot)$ implies that $f(x, \cdot)$ is continuously differentiable so that Lemma \ref{lm:PLcalm} holds. Note that Lemma \ref{lm:PLcalm} also generalizes the results in \citep{xiao2023generalized,chen2025foops} by relaxing the global/local Lipschitz continuity assumption on $f(x,\cdot)$ via continuously differentiable (ensuring local Lipschitz continuity). 

We then aim to prove in Lemma \ref{stationary_relation} that the stationary point of the penalty reformulation \eqref{eq:F_gam_function} is approximately the stationary point of the original BLO problem in \eqref{eq: original problem 1} (i.e., the KKT point of \eqref{opt1}).

\subsubsection{Proof of Lemma \ref{stationary_relation}}

\begin{proof}
Let $x^*$ be the stationary point of the penalty problem \eqref{eq:F_gam_function}, then we have $\|\nabla F_\gamma (x^*)\|\leq\epsilon$. Then according to \eqref{eq: nabla F gam}, for $\forall y_\gamma\in S_\gamma^*(x^*)$ and $\forall y_g\in S_g^*(x^*)$, we have  
\begin{subequations}\label{penalty_stationary}
\begin{align}
\|\nabla_x f(x^*,y_\gamma)+\gamma (\nabla_x g(x^*,y_\gamma)-\nabla_x g(x^*,y_g))\|^2&\leq\epsilon\\
\|\nabla_y f(x^*,y_\gamma)+\gamma (\nabla_y g(x^*,y_\gamma)-\nabla_y g(x^*,y_g))\|^2&\leq\epsilon. 
\end{align}
\end{subequations}
We aim to prove that $(x^*,y_\gamma)$ is approximately the KKT point of \eqref{opt1}, i.e. $\exists$ finite $w\in\mathbb{R}^{d_y}$, s.t. 
\begin{subequations}\label{KKT_conditions_form}
\begin{align}
\|\nabla_x f(x^*,y_\gamma)+\nabla_{xy}g(x^*,y_\gamma)w\|^2&\leq {\cal O}(\epsilon)\label{KKT_conditions_form:1}\\ 
\|\nabla_y f(x^*,y_\gamma)+\nabla_{yy}g(x^*,y_\gamma)w\|^2&\leq {\cal O}(\epsilon)\label{KKT_conditions_form:2}\\
\|\nabla_y g(x^*,y_\gamma)\|^2&\leq {\cal O}(\epsilon). \label{KKT_conditions_form:3}
\end{align}
\end{subequations} 
The approximate LL optimality in \eqref{KKT_conditions_form:3} is earned by Lemma \ref{lemma: tighter bound for y gam yg}, which gives 
\begin{align*}
d_{S_g^*(x)}(y_\gamma) =  {\cal O}(\gamma^{-\frac{1}{2-\alpha}}+\delta^{\frac{1}{2}}\gamma^{-\frac{1}{2}})={\cal O}(\epsilon^{0.5}+\delta^{0.5}\epsilon^{\frac{2-\alpha}{4}})
\end{align*}
when $\gamma={\cal O}(\epsilon^{-\frac{2-\alpha}{2}})$. Therefore, when 
$\delta\leq\epsilon^{\frac{\alpha}{2}}$, it holds that 
\begin{align}\label{epsilon_mark}
\|\nabla_y g(x^*,y_\gamma)\|^2\leq{\cal O}(d_{S_g^*(x)}^2(y_\gamma))={\cal O}(\epsilon+\delta\epsilon^{\frac{2-\alpha}{2}})\leq {\cal O}(\epsilon)
\end{align}
where the first inequality is earned by the smoothness condition. Moreover, by Taylor expansion of \eqref{penalty_stationary} and letting $y_g=\argmin_{y\in S_g^*(x)}\|y_\gamma-y_g\|$, it holds that 
\begin{align*}
\|\nabla_x f(x^*,y_\gamma)+\gamma \nabla_{xy} g(x^*,y_\gamma)(y_\gamma-y_g)
\|^2&\leq\epsilon+{\cal O}(\|y_\gamma-y_g\|^2)\leq {\cal O}(\epsilon), \\
\|\nabla_y f(x^*,y_\gamma)+\gamma \nabla_{yy} g(x^*,y_\gamma)(y_\gamma-y_g)\|^2&\leq\epsilon+{\cal O}(\|y_\gamma-y_g\|^2)\leq {\cal O}(\epsilon). 
\end{align*}
where the last two inequalities are due to \eqref{epsilon_mark}. Together with \eqref{epsilon_mark} and defining $w=\gamma (y_\gamma-y_g)$ with finite norm $\|w\|={\cal O}(\epsilon^{-\frac{2-\alpha}{2}}\epsilon)={\cal O}(\epsilon^{\frac{\alpha}{2}})\leq 1$, the point $(x^*,y_\gamma,w)$ satisfies the approximate KKT conditions in \eqref{KKT_conditions_form}. 
\end{proof}

\subsection{Proof of Theorem \ref{thm: no value algorithm convergence}}
\label{appendix: proof for no value algorithm convergence}

In the following, we start with a more general setting where $x$ is bounded in a domain $\mathcal{X}$ and the update of $x$ is conducted via projected gradient descent.

Denote the gradient approximate $g_t = \nabla_x f(x,y_{t+1}^\gamma)$. According to smoothness, we have
\begin{align}
    F_\gamma(x_{t+1})- F_\gamma(x_t)\leq &  \langle \nabla F_\gamma(x_t)- g_t +  g_t, x_{t+1}-x_t \rangle + \frac{l_{F,1}}{2}\| x_{t+1}-x_t\|^2 \nonumber \\
    \leq & - \frac{1}{\eta} \| x_{t+1}-x_t\|^2+ \frac{1}{2\eta}\| x_{t+1}-x_t\|^2 +  \| \nabla F_\gamma(x_{t+1})-g_t\|\| x_{t+1}-x_t\| \nonumber \\
    \leq & -\frac{1}{2\eta}\| x_{t+1}-x_t\|^2 + \frac{1}{4\eta}\| x_{t+1}-x_t\|^2+\eta  \| \nabla F_\gamma(x_{t+1})-g_t\|^2 \nonumber \\
    = & -\frac{1}{4\eta}\| x_{t+1}-x_t\|^2 + \eta  \| \nabla F_\gamma(x_{t+1})-g_t\|^2 \label{eq: PBGD-Free fully single loop intermediate step}
\end{align}
where the second inequality uses $\eta \leq l_{F,1}^{-1}$, $\langle g_t,x_{t+1}-x_t\rangle \leq -\frac{1}{\eta}\| x_{t+1}-x_t\|^2$ by \citep[Lemma 3.1]{bubeck2015convex} and Cauchy-Schwartz inequality; the third applies Young's inequality

For simplicity, denote $h(x,y)=\gamma^{-1}f(x,y)+g(x,y)$,  $v^h(x)=\min_{y\in \mathcal{Y}}h(x,y)$, $y_t^{\gamma,*} = \arg \min_{y\in \mathcal{Y}}h(x_t,y)$ and $y_t^{g,*}\in \arg \min_{y\in \mathcal{Y}}g(x_t,y)$, and the update bias $b(x_t) = \nabla F_\gamma(x_t) - g_t$. In this way,
\begin{align}
    \|b(x_t)\|^2 = & \| \nabla_x f(x_t,y_{t}^{\gamma,*}) + \gamma(\nabla_x g(x_t,y_{t}^{\gamma,*})-\nabla_x g(x_t,y_{t}^{g,*}))- \nabla_x f(x_t,y_{t+1}^{\gamma}) \|^2 \nonumber\\
    \stackrel{(a)}{\leq} & 2\| \nabla_x f(x_t,y_{t}^{\gamma,*}) - \nabla_x f(x_t,y_{t+1}^{\gamma}) \| +2\gamma^2\|\nabla_x g(x_t,y_{t}^{\gamma,*})-\nabla_x g(x_t,y_{t}^{g,*})\|^2 \nonumber\\
    \stackrel{(b)}{\leq}  & 2l_{f,1}^2\|y_{t+1}-y_t^{\gamma} \|^2 + {\cal O}\big(\gamma^{-\frac{2(\alpha-1)}{2-\alpha}}+\delta \gamma \big)\nonumber\\
    \stackrel{(c)}{\leq}  & \frac{4}{\mu_\gamma^*}l_{f,1}^2(h(x_t,y_{t+1}^\gamma)-v^h(x_t)) + {\cal O}\big(\gamma^{-\frac{2(\alpha-1)}{2-\alpha}}+\delta \gamma \big)\nonumber\\
    \stackrel{(d)}{\leq}  & \frac{4}{\mu_\gamma^*}l_{f,1}^2 (1-\eta^\gamma \mu)(h(x_t,y_{t}^\gamma)-v^h(x_t))+ {\cal O}\big(\gamma^{-\frac{2(\alpha-1)}{2-\alpha}}+\delta \gamma \big) \label{eq: PBGD-Free bias bound intermediate step}
\end{align}
where (a) applies the Young's inequality; (b) follows the smoothness of $f$ and Lemma \ref{lemma: tighter bound for y gam yg}; (c) employs the property of strong convexity; and (d) is by the descent theory for applying projected gradient descent on problems satisfying PL condition, see e.g. \citep[Theorem 5]{karimi2016linear}.

Plugging \eqref{eq: PBGD-Free bias bound intermediate step} in \eqref{eq: PBGD-Free fully single loop intermediate step}, there is
\begin{align}
    F_\gamma(x_{t+1})- F_\gamma(x_t) \leq & -\frac{\eta}{4}  \|\nabla F_\gamma(x_t)  \|^2 + \eta\frac{4}{\mu_\gamma^*}l_{f,1}^2 (1-\eta^\gamma \mu)(h(x_t,y_{t}^\gamma)-v^h(x_t)) \nonumber \\
    &+ \eta {\cal O}\big(\gamma^{-\frac{2(\alpha-1)}{2-\alpha}}+\delta \gamma \big)
\end{align}

Moreover, as $h(x,y)$ is $l_{h,1}=\gamma^{-1}l_{f,1}+l_{g,1}$-smooth and $v^h(x)$ is $l_{v^h,1}=l_{h,1}(1+L_y^\gamma)$-smooth, there is
\begin{align}
    &h(x_{t+1},y_{t+1}^\gamma)-v^h(x_{t+1}) \nonumber \\
    \stackrel{(a)}{\leq} & h(x_t,y_{t+1}^\gamma)-v^h(x_t) + \langle \nabla_x h(x_t,y_{t+1}^\gamma)-\nabla v^h(x_t),x_{t+1}-x_t\rangle +\frac{\eta^2(l_{h,1}+l_{v^h,1})}{2}\|\frac{ x_{t+1}-x_t}{\eta}\|^2 \nonumber \\
    \stackrel{(b)}{\leq} & h(x_t,y_{t+1}^\gamma)-v^h(x_t) + \eta l_{h,1}\|y_{t+1}^\gamma-y_t^{\gamma,*}\| \|\frac{ x_{t+1}-x_t}{\eta}\| +\frac{\eta^2(l_{h,1}+l_{v^h,1})}{2}\|\frac{ x_{t+1}-x_t}{\eta}\|^2 \nonumber \\
    \stackrel{(c)}{\leq} & h(x_t,y_{t+1}^\gamma)-v^h(x_t) + \eta l_{h,1}\frac{z}{2}\|y_{t+1}^\gamma-y_t^{\gamma,*}\|^2+ \frac{\eta l_{h,1}}{2z}\|\frac{ x_{t+1}-x_t}{\eta}\|^2 +\frac{\eta^2(l_{h,1}+l_{v^h,1})}{2}\|\frac{ x_{t+1}-x_t}{\eta}\|^2 \nonumber \\
    \stackrel{(d)}{\leq} & (1+\frac{\eta l_{h,1}z}{2})(h(x_t,y_{t+1}^\gamma)-v^h(x_t)) + (\frac{\eta l_{h,1}}{2z}+ \frac{\eta^2(l_{h,1}+l_{v^h,1})}{2})\|\frac{ x_{t+1}-x_t}{\eta}\|^2 \nonumber \\
    \stackrel{(e)}{\leq} & (1+\frac{\eta l_{h,1}z}{2})(1-\eta^\gamma \mu)(h(x_t,y_t^\gamma)-v^h(x_t)) + (\frac{\eta l_{h,1}}{2z}+ \frac{\eta^2(l_{h,1}+l_{v^h,1})}{2})\|\frac{ x_{t+1}-x_t}{\eta}\|^2, \quad \forall z>0. \label{eq: h -vh update} 
\end{align}
Here, (a) follows the smoothness of $h(x,y)+v^h(x)$ in $x$; (b) applies Cauchy-Schwartz inequality and the smoothness of $h$ in $y$; (c) uses Young's inequality for any $z>0$; (d) is from the PL condition of $h(x,y)$ in $y$; (e) is similarly by the descent theory for applying projected gradient descent on $h(x,\cdot)$ satisfying PL condition \citep[Theorem 5]{karimi2016linear}.


In this way, adding $c (h(x_{t+1},y_{t+1}^\gamma)-v^h(x_{t+1}))$
to both side of \eqref{eq: PBGD-Free bias bound intermediate step}, there is
\begin{align*}
    & F_\gamma(x_{t+1}) + c(h(x_{t+1},y_{t+1}^\gamma)-v^h(x_{t+1})) \\
    \leq & F_\gamma(x_t) + \left( -\frac{\eta}{4} +c \Big(\frac{\eta l_{h,1}}{2z}+ \frac{\eta^2(l_{h,1}+l_{v^h,1})}{2} \Big) \right)\|\frac{x_{t+1}-x_t}{\eta}\|^2 \\
    & + c\Big( \big(1+\eta \big(\frac{ l_{h,1}z}{2} + l_{f,1}^2 \frac{4}{\mu_\gamma^* c} \big) \big) (1-\eta^\gamma \mu)(h(x_t,y_{t}^\gamma)-v^h(x_t))  
    \Big)  +\eta {\cal O}\big(\gamma^{-\frac{2(\alpha-1)}{2-\alpha}}+\delta \gamma \big).
\end{align*}

In this way, choose the following hyper-parameter,
\begin{align}
    \begin{cases}
        c = \mu^{-\frac{1}{2}}\\
        z  = 8 c l_{h,1} \\
        \eta^\gamma \leq l_{h,1}^{-1}\\
        \eta \leq \min \left\{\frac{1}{8  c(l_{h,1}+l_{v^h,1})},\frac{\eta^\gamma \mu/(1-\eta^\gamma \mu)}{\frac{l_{h,1}z}{2}+ \frac{4\ l_{f,1}^2}{\mu c}} \right\}
    \end{cases} \label{eq: hyper-parameter choice for FSL PBGD-Free}
\end{align}
i.e. $c  = {\cal O}(1)$, $\eta= {\cal O}(1)$, there is
\begin{align*}
    & F_\gamma(x_{t+1})- F_\gamma(x_t) + c (h(x_{t+1},y_{t+1}^\gamma)-v^h(x_{t+1}))\\
    \leq &  -\frac{\eta}{8} \|\nabla F_\gamma(x_t)\|^2 +c (h(x_t,y_{t}^\gamma)-v^h(x_t))  
    +\eta {\cal O}\big(\gamma^{-\frac{2(\alpha-1)}{2-\alpha}}+\delta \gamma \big).
\end{align*}
Denote $D_1 = F_\gamma(x_0)- F_\gamma(x_T)$, $D_2 = (h(x_0,y_0^\gamma)-v^h(x_0)) - (h(x_T,y_T^\gamma)-v^h(x_T))$. Rearranging and telescoping gives 
\begin{align}
    \frac{1}{T}\sum_{t=0}^{T-1} \|\nabla F_\gamma(x_t) \|^2 \leq &  \frac{8 \left(  D_1 + c D_2 \right)}{\eta T} + {\cal O}\big(\gamma^{-\frac{2(\alpha-1)}{2-\alpha}}+\delta \gamma \big) \nonumber \\
    = & {\cal O}( T^{-1} + \delta^{\frac{2(\alpha-1)}{\alpha}}) \label{eq: x update bound}
\end{align}
where the last equality is achieved as $c  = {\cal O}(1)$ and $\eta= {\cal O}(1)$ and by setting $\gamma = \Oc(\delta^{-\frac{2-\alpha}{\alpha}})$. 
This confirms that the trajectory $x_t$ stabilizes on average.
Moreover, the hyper-parameter choices in \eqref{eq: hyper-parameter choice for FSL PBGD-Free} reformulate \eqref{eq: h -vh update}, which can be plugged in \eqref{eq: PBGD-Free bias bound intermediate step} to obtain
\begin{align}
    \frac{1}{T}\sum_{t=0}^{T-1} \| b_t\|^2  \leq & \frac{4}{\mu_\gamma^*}l_{f,1}^2 (1-\eta^\gamma \mu)\frac{1}{T}\sum_{t=0}^{T-1} (h(x_t,y_{t}^\gamma)-v^h(x_t))+ {\cal O}\big( \delta^{\frac{2(\alpha-1)}{\alpha}} \big) \leq{\cal O}\big(T^{-1} + \delta^{\frac{2(\alpha-1)}{\alpha}} \big). \label{eq: bias update bound}
\end{align}
where the last inequality follows by applying Abel's summation formula on series $\sum_{k=1}^K a_k b_k$ where $a_k=\Oc((1-\eta^\gamma \mu/2)^k)$ and $K^{-1}\sum_{k=0}^K b_k = {\cal O}( T^{-1} + \delta^{\frac{2(\alpha-1)}{\alpha}})$.

In this way, there is
\begin{align}
    \| \nabla F_\gamma(x_t)\|^2 = &\Big\| \frac{x_t-(x_t-\eta \nabla F_\gamma(x_t))}{\eta}\Big\|^2  \nonumber \\
    = & \Big\| \frac{x_t-(x_t-\eta g_t -\eta b_t)}{\eta}\Big\|^2  \nonumber \\
    \leq & 2\Big\|\frac{x_t-x_{t+1}}{\eta} \Big\|^2 + 2\| b_t\|^2 \label{eq: G eta and x update}
\end{align}
where inequality is by Young's inequality. Therefore,
\begin{align}
   \frac{1}{T}\sum_{t=0}^{T-1}  \| \nabla F_\gamma(x_t)\|^2 \leq & 2\frac{1}{T}\sum_{t=0}^{T-1}  \Big\|\frac{x_t-x_{t+1}}{\eta} \Big\|^2 + 2\frac{1}{T}\sum_{t=0}^{T-1} \| b_t\|^2 \nonumber \\
   \leq &  {\cal O}\big(T^{-1} + \delta^{\frac{2(\alpha-1)}{\alpha}} \big)
\end{align}
where the last inequality is obtained by plugging in \eqref{eq: x update bound} and \eqref{eq: bias update bound}.
$\alpha\leq 1.5$ and rearranging.

\subsection{\texorpdfstring{Additional discussion on fully-single-loop version of F$^2$SA \cite{kwon2023penalty}}{Additional discussion on fully-single-loop version of F2SA without momentum}}
\label{appendix: fully single loop of PBGD}

\begin{algorithm}[t]
\caption{Fully-single-loop F$^2$SA \citep{kwon2023penalty} without momentum}
\label{alg: ALT-PBGD FSL}
\begin{algorithmic}[1]
\STATE \textbf{inputs:} initial points $x_0$; 
step size $\eta$, $\eta^g$, $\eta^\gamma$; counters $T$. 
    \FOR{$t = 0, 1, \ldots, T-1$}
        \STATE update $y_{t+1}^g =  y_t^g-\eta^g \nabla_y g(x_t,y_t)$ 
        \STATE update $y_{t+1}^\gamma = y_t^\gamma-\eta^\gamma \big(\nabla_y \gamma^{-1}f(x_t,y_t^\gamma) +\nabla_y g(x_t,y_t^\gamma) \big)$
        \STATE update $x_{t+1} = x_t -\eta g_t $ where $g_t= \nabla_x f(x,y_{t}^\gamma)+ \gamma (\nabla_x g(x,y_{t}^\gamma)-\nabla_x g(x,y_{t}^g))$.
    \ENDFOR
\STATE \textbf{outputs:} $(x_T, y_{t}^\gamma )$
\end{algorithmic}
\end{algorithm}

Since momentum updates in F$^2$SA \cite{kwon2023penalty} introduce additional memory cost, in this section, we look into the fully-single-loop version of F$^2$SA \cite{kwon2023penalty} without momentum, i.e. at each iteration $t$, it updates:
\begin{align}
    y_{t+1}^g = & y_t^g-\eta^g \nabla_y g(x_t,y_t), \quad \text{and}  
    \label{eq: yg update fsl} \\
    y_{t+1}^\gamma  = & y_t^\gamma-\eta^\gamma \big(\nabla_y \gamma^{-1}f(x_t,y_t^\gamma) +\nabla_y g(x_t,y_t^\gamma) \big) 
    \label{eq: y gam update fsl} 
\end{align}
where $\eta^g\leq l_{g,1}^{-1}$ and $\eta^\gamma \leq (l_{g,1}+\gamma^{-1}l_{f,1})^{-1}$ are the step sizes. We summarize the algorithm in Algorithm \ref{alg: ALT-PBGD FSL} present the convergence results in the following theorem.

\begin{proposition}
\label{thm: ALT-PBGD}
    Suppose all assumptions in Proposition \ref{prop: PBGD} hold. For iterations using the fully-single-loop version of Algorithm \ref{alg: ALT-PBGD FSL} with $\eta = {\cal O}(\gamma^{-1})$ gives
\begin{align}
    \frac{1}{T}\sum_{t = 0}^{T-1} \|\nabla F_\gamma(x_t)\|^2 ={\cal O}(\gamma^2 T^{-1}).
\end{align}
\end{proposition}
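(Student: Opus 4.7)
\textbf{Proof proposal for Proposition \ref{thm: ALT-PBGD}.}

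The plan is to mimic the Lyapunov analysis used for Theorem \ref{thm: no value algorithm convergence}, but with an additional auxiliary sequence for $y_t^g$ because now the update $g_t$ contains the value-function part $\gamma(\nabla_x g(x_t,y_t^\gamma)-\nabla_x g(x_t,y_t^g))$. The bias $b_t:=\nabla F_\gamma(x_t)-g_t$ will no longer contain an irreducible $\Omega(1)$ term as in Proposition \ref{prop: PBGD}; instead, by the smoothness of $f$ and $g$, one obtains
\begin{align*}
\|b_t\|^2 \le (4l_{f,1}^2+8\gamma^2 l_{g,1}^2)\,\|y_t^\gamma-y_\gamma^*(x_t)\|^2 + 8\gamma^2 l_{g,1}^2\,\|y_t^g-y_g^*(x_t)\|^2.
\end{align*}
Using the $\mu$-QG consequence of the PL condition (Lemma \ref{lemma: equiv of PL EB QG}) applied to $h(x,\cdot):=\gamma^{-1}f(x,\cdot)+g(x,\cdot)$ and to $g(x,\cdot)$ respectively, I can replace the squared distances by the sub-optimality gaps $h(x_t,y_t^\gamma)-v^h(x_t)$ and $g(x_t,y_t^g)-v(x_t)$.

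Next I would carry out a standard descent lemma on $F_\gamma$, using that $F_\gamma$ is $l_{F,1}=\mathcal{O}(1)$-smooth \citep{chen2024finding}. With $\eta\le 1/(4l_{F,1})$, this produces
\begin{equation*}
F_\gamma(x_{t+1})-F_\gamma(x_t)\le -\tfrac{\eta}{4}\|\nabla F_\gamma(x_t)\|^2 + \eta\|b_t\|^2,
\end{equation*}
after absorbing the Young's inequality remainder via $\|g_t\|^2\le 2\|\nabla F_\gamma(x_t)\|^2+2\|b_t\|^2$. Step (e) of \eqref{eq: h -vh update} in the main proof then gives, for the $h$-gap,
\begin{equation*}
h(x_{t+1},y_{t+1}^\gamma)-v^h(x_{t+1})\le (1+\tfrac{\eta l_{h,1} z}{2})(1-\eta^\gamma\mu)\big(h(x_t,y_t^\gamma)-v^h(x_t)\big)+C\|x_{t+1}-x_t\|^2/\eta,
\end{equation*}
and an analogous inequality holds for the $g$-gap after replacing $h$, $\eta^\gamma$ by $g$, $\eta^g$. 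These two auxiliary recursions would be derived by combining smoothness of $v^h$, $v$ in $x$ with the PL-linear contraction of one gradient step.

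I would then form the Lyapunov function
\begin{equation*}
V_t := F_\gamma(x_t) + c_1\big(h(x_t,y_t^\gamma)-v^h(x_t)\big) + c_2\big(g(x_t,y_t^g)-v(x_t)\big),
\end{equation*}
and pick $c_1,c_2=\Theta(\gamma^2\eta/\mu^2)$ large enough so that the bias terms $\eta\|b_t\|^2$ are swallowed by the contraction $\eta^\gamma\mu\,c_1$ and $\eta^g\mu\,c_2$ from the PL decrements, while at the same time the $\|x_{t+1}-x_t\|^2$ perturbations on the two gaps remain dominated by $\eta/4\cdot\|\nabla F_\gamma(x_t)\|^2$. Here the constraint $\eta=\mathcal{O}(\gamma^{-1})$ is precisely what is needed to control the $\eta^2\|g_t\|^2=\eta^2(\|\nabla F_\gamma\|^2+\|b_t\|^2)$ perturbation, since $\|b_t\|^2$ itself scales as $\gamma^2$ times the gaps. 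Telescoping $V_{t+1}\le V_t - \tfrac{\eta}{8}\|\nabla F_\gamma(x_t)\|^2$ and dividing by $\eta T$ yields
\begin{equation*}
\tfrac{1}{T}\sum_{t=0}^{T-1}\|\nabla F_\gamma(x_t)\|^2 \le \tfrac{8(V_0-V_T)}{\eta T} = \mathcal{O}\!\left(\tfrac{1+\gamma^2\eta}{\eta T}\right) = \mathcal{O}(\gamma^2 T^{-1}),
\end{equation*}
where the final equality uses $\eta=\Theta(\gamma^{-1})$ and $V_0-V_T=\mathcal{O}(1+\gamma^2\eta)$ since the initial PL gaps are $\mathcal{O}(1)$ while their coefficients are $\mathcal{O}(\gamma^2\eta)$.

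The main obstacle I anticipate is \emph{choosing the Lyapunov weights $c_1,c_2$ and the step-size coupling $\eta=\mathcal{O}(\gamma^{-1})$ simultaneously}: the weights must be large enough (scaling like $\gamma^2$) to absorb $\gamma^2$-bias into the strict PL contraction of the $y$-updates, yet their $\mathcal{O}(\|x_{t+1}-x_t\|^2)$ perturbation in turn forces $\eta$ to be $\mathcal{O}(\gamma^{-1})$, which is ultimately what yields the suboptimal $\gamma^2 T^{-1}$ rate (as opposed to the $T^{-1}$ rate in Theorem \ref{thm: no value algorithm convergence}, which does not track a second auxiliary sequence $y_t^g$). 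Making these three scales consistent—while keeping $\eta^g\le l_{g,1}^{-1}$ and $\eta^\gamma\le(l_{g,1}+\gamma^{-1}l_{f,1})^{-1}$—is the bookkeeping-heavy step but is standard once the Lyapunov design is fixed.
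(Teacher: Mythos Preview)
Your proposal is correct and follows essentially the same Lyapunov approach as the paper's proof: a descent lemma on $F_\gamma$, bias control via the $h$- and $g$-suboptimality gaps (through QG/PL), two auxiliary recursions for those gaps, and then a combined potential $V_t=F_\gamma(x_t)+c_1(h-v^h)+c_2(g-v)$ telescoped with weights that scale like $\gamma$ and step size $\eta=\Theta(\gamma^{-1})$. The only cosmetic difference is that the paper fixes $c=c'=\gamma\mu^{-1/2}$ directly, whereas you write $c_1,c_2=\Theta(\gamma^2\eta/\mu^2)$, but with $\eta=\Theta(\gamma^{-1})$ these coincide and yield the identical final bound.
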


\begin{proof}
Denote the gradient approximate $g_t = \nabla_x f(x,y_{t+1}^\gamma) +\gamma\nabla_x g(x,y_{t+1}^\gamma)-\gamma\nabla_x g(x,y_{t+1}^g)$. According to smoothness, we have
\begin{align}
    F_\gamma(x_{t+1})- F_\gamma(x_t)\leq &  \langle \nabla F_\gamma(x_t)- g_t +  g_t, x_{t+1}-x_t \rangle + \frac{l_{F,1}}{2}\| x_{t+1}-x_t\|^2 \nonumber \\
    \leq & - \frac{1}{\eta} \| x_{t+1}-x_t\|^2+ \frac{1}{2\eta}\| x_{t+1}-x_t\|^2 +  \| \nabla F_\gamma(x_{t+1})-g_t\|\| x_{t+1}-x_t\| \nonumber \\
    \leq & -\frac{1}{2\eta}\| x_{t+1}-x_t\|^2 + \frac{1}{4\eta}\| x_{t+1}-x_t\|^2+\eta  \| \nabla F_\gamma(x_{t+1})-g_t\|^2 \nonumber \\
    = & -\frac{\eta}{4}\| \nabla F_\gamma (x_t) \|^2 + \eta  \| \nabla F_\gamma(x_{t+1})-g_t\|^2 \label{eq: F descent fully single loop intermediate step}
\end{align}
where the second inequality uses $\eta \leq l_{F,1}^{-1}$, $\langle g_t,x_{t+1}-x_t\rangle \leq -\frac{1}{\eta}\| x_{t+1}-x_t\|^2$ by \citep[Lemma 3.1]{bubeck2015convex} and Cauchy-Schwartz inequality; the third applies Young's inequality

Moreover, denote $h(x,y)=\gamma^{-1}f(x,y)+g(x,y)$,  $v^h(x)=\min_{y\in \mathcal{Y}}h(x,y)$, $y_t^{\gamma,*} = \arg \min_{y\in \mathcal{Y}}h(x_t,y)$ and $y_t^{g,*}\in \arg \min_{y\in \mathcal{Y}}g(x_t,y)$, by triangle inequality and Young's inequality, there is
\begin{align}
    & \| \nabla F_\gamma(x_t)-g_t\|^2\nonumber\\
    \leq & 2\gamma^2\|\nabla_x h(x_t,y_{t+1}^\gamma)-\nabla v^h(x_t) \|^2+ 2\gamma^2\|\nabla_x g(x_t,y_{t+1}^g)-\nabla v(x_t) \|^2  \nonumber\\
    = & 2\gamma^2 l_{h,1}^2 \|y_{t+1}^\gamma-y_t^{\gamma,*}\|^2 + 2 \gamma^2 l_{g,1}^2 \|y_{t+1}^g-y_t^{g,*}\|^2 \nonumber\\
    \leq & 2\gamma^2 l_{h,1}^2 \frac{2}{\gamma \mu_h} \gamma(h(x_t,y_{t+1})-v^h(x_t)) + 2 \gamma^2 l_{g,1}^2  \frac{2}{\gamma \mu}\gamma(g(x_t,y_{t+1})-v(x_t)) \nonumber\\
    \leq & 2\gamma^2 l_{h,1}^2 \frac{2}{\gamma \mu_h} (1-\eta^y \mu_h)\gamma(h(x_t,y_{t})-v^h(x_t)) + 2 \gamma^2 l_{g,1}^2  \frac{2}{\gamma \mu}  (1-\eta^y \mu) \gamma (g(x_t,z_{t})-v(x_t)) \nonumber\\
    \leq & 2\gamma^2 l_{h,1}^2 \frac{2}{\gamma \mu_h} (1-\eta^y \mu_h)\gamma(h(x_t,y_{t})-v^h(x_t)) + 2 \gamma^2 l_{g,1}^2  \frac{2}{\gamma \mu}  (1-\eta^y \mu) \gamma (g(x_t,z_{t})-v(x_t)) \nonumber\\
    \leq & 2\gamma^2 l_{h,1}^2 \frac{1}{\gamma^2 \mu_h^2} (1-\eta^y \mu_h) \| \gamma \nabla_y h(x_t,y_t)\|^2  + 2 \gamma^2 l_{g,1}^2  \frac{1}{\gamma^2 \mu^2}  (1-\eta^y \mu) \| \gamma \nabla_y g(x_t,z_t)\|^2
    \label{eq: bias bound intermediate step}
\end{align}
The second to last inequality follows PL condition and the last inequality is by the descent theory for applying projected gradient descent on problems satisfying PL condition, see e.g. \citep[Theorem 5]{karimi2016linear}.

Plugging \eqref{eq: bias bound intermediate step} in \eqref{eq: F descent fully single loop intermediate step}, there is
\begin{align}
F_\gamma(x_{t+1})- F_\gamma(x_t) \leq & -\frac{\eta}{4} \|\nabla F_\gamma (x_t)\|^2  + \eta\frac{4}{\mu_\gamma^*}\gamma^2 (\gamma^{-1}l_{f,1}+l_{g,1})^2 (1-\eta^\gamma \mu)(h(x_t,y_{t}^\gamma)-v^h(x_t)) \nonumber \\
& +\eta \frac{4}{\mu}\gamma^2 ( l_{g,1})^2 (1-\eta^g \mu)(h(x_t,y_{t}^g)-v^h(x_t)). \label{eq: bias bound intermediate step 2}
\end{align}

Moreover, as $h(x,y)$ is $l_{h,1}=\gamma^{-1}l_{f,1}+l_{g,1}$-smooth and $v^h(x)$ is $l_{v^h,1}=l_{h,1}(1+L_y^\gamma)$-smooth, there is
\begin{align}
    &h(x_{t+1},y_{t+1}^\gamma)-v^h(x_{t+1}) \nonumber \\
    \stackrel{(a)}{\leq} & h(x_t,y_{t+1}^\gamma)-v^h(x_t) + \langle \nabla_x h(x_t,y_{t+1}^\gamma)-\nabla v^h(x_t),x_{t+1}-x_t\rangle +\frac{\eta^2(l_{h,1}+l_{v^h,1})}{2}\|\nabla F_\gamma(x_t)\|^2 \nonumber \\
    \stackrel{(b)}{\leq} & h(x_t,y_{t+1}^\gamma)-v^h(x_t) + \eta l_{h,1}\|y_{t+1}^\gamma-y_t^{\gamma,*}\| \|\nabla F_\gamma(x_t)\| +\frac{\eta^2(l_{h,1}+l_{v^h,1})}{2}\|\nabla F_\gamma(x_t)\|^2 \nonumber \\
    \stackrel{(c)}{\leq} & h(x_t,y_{t+1}^\gamma)-v^h(x_t) + \eta l_{h,1}\frac{z}{2}\|y_{t+1}^\gamma-y_t^{\gamma,*}\|^2+ \frac{\eta l_{h,1}}{2z}\|\nabla F_\gamma(x_t)\|^2 +\frac{\eta^2(l_{h,1}+l_{v^h,1})}{2}\|\nabla F_\gamma(x_t)\|^2 \nonumber \\
    \stackrel{(d)}{\leq} & (1+\frac{\eta l_{h,1}z}{2})(h(x_t,y_{t+1}^\gamma)-v^h(x_t)) + (\frac{\eta l_{h,1}}{2z}+ \frac{\eta^2(l_{h,1}+l_{v^h,1})}{2})\|\nabla F_\gamma(x_t)\|^2 \nonumber \\
    \stackrel{(e)}{\leq} & (1+\frac{\eta l_{h,1}z}{2})(1-\eta^\gamma \mu)(h(x_t,y_t^\gamma)-v^h(x_t)) + (\frac{\eta l_{h,1}}{2z}+ \frac{\eta^2(l_{h,1}+l_{v^h,1})}{2})\|\nabla F_\gamma(x_t)\|^2, \quad \forall z>0. 
\end{align}
Here, (a) follows the smoothness of $h(x,y)+v^h(x)$ in $x$; (b) applies Cauchy-Schwartz inequality and the smoothness of $h$ in $y$; (c) uses Young's inequality for any $z>0$; (d) is from the PL condition of $h(x,y)$ in $y$; (e) is similarly by the descent theory for applying projected gradient descent on $h(x,\cdot)$ satisfying PL condition \citep[Theorem 5]{karimi2016linear}.

Following similar analysis, as $g(x,y)$ is $l_{g,1}$-smooth and $v(x)$ is $l_{v,1}=l_{g,1}(1+L_y^g)$-smooth, there is
\begin{align}
    &g(x_{t+1},y_{t+1}^g)-v(x_{t+1}) \nonumber \\
    \leq & (1+\frac{\eta l_{g,1}z'}{2})(1-\eta^g \mu_{g^*})(g(x_t,y_t^g)-v (x_t)) + (\frac{\eta l_{g,1}}{2z'}+ \frac{\eta^2(l_{g,1}+l_{v,1})}{2})\|\nabla F_\gamma(x_t)\|^2. \label{eq: g - v update} 
\end{align}

In this way, adding $c (h(x_{t+1},y_{t+1}^\gamma)-v^h(x_{t+1}))$ and $c'(g(x_{t+1},y_{t+1}^g)-v(x_{t+1}))$ 
to both side of \eqref{eq: bias bound intermediate step 2}, there is
\begin{align*}
    & F_\gamma(x_{t+1})- F_\gamma(x_t) + c (h(x_{t+1},y_{t+1}^\gamma)-v^h(x_{t+1})) + c' (g(x_{t+1},y_{t+1}^g)-v(x_{t+1}))\\
    \leq & \left( -\frac{\eta}{4} +c \Big(\frac{\eta l_{h,1}}{2z}+ \frac{\eta^2(l_{h,1}+l_{v^h,1})}{2} \Big) +c'  \Big(\frac{\eta l_{g,1}}{2z'}+ \frac{\eta^2(l_{g,1}+l_{v,1})}{2} \Big) \right)\|\nabla F_\gamma(x_t)\|^2 \\
    &+ c \Big( \big(1+\eta \big(\frac{ l_{h,1}z}{2} + \gamma^2 l_{h,1}^2 \frac{4}{\mu_\gamma^* c} \big) \big) (1-\eta^\gamma \mu)(h(x_t,y_{t}^\gamma)-v^h(x_t))  
    \Big) \\
    &+ c' \Big( \big(1+\eta \big(\frac{ l_{g,1}z'}{2} +  \gamma^2 l_{g,1}^2  \frac{4}{\mu c'} \big) \big) (1-\eta^g \mu_{g})(g(x_t,y_{t}^g-v(x_t))  
    \Big).
\end{align*}

In this way, choose the following hyper-parameter,
\begin{align}
    \begin{cases}
        c =\gamma \mu^{-\frac{1}{2}}\\
        c' =\gamma \mu^{-\frac{1}{2}}\\
        z  = 16 c l_{h,1} \\
        z' = 16 c' l_{g,1}\\
        \eta^g \leq l_{g,1}^{-1}\\
        \eta^\gamma \leq l_{h,1}^{-1}\\
        \eta \leq \min \left\{\frac{1}{16  c(l_{h,1}+l_{v^h,1})},\frac{1}{16  c' (l_{g,1}+l_{v,1})},\frac{\eta^\gamma \mu/(1-\eta^\gamma \mu)}{\frac{l_{h,1}z}{2}+ \frac{4\gamma^2 l_{h,1}^2}{\mu c}},\frac{\eta^g \mu_{g}/(1-\eta^g \mu_{g})}{\frac{l_{g,1}z'}{2}+ \frac{4\gamma^2 l_{g,1}^2}{\mu_{g} c}} \right\}
    \end{cases}
\end{align}
i.e. $c,c' = {\cal O}(\gamma)$, $\eta= {\cal O}(\gamma^{-1})$, there is
\begin{align*}
    & F_\gamma(x_{t+1})- F_\gamma(x_t) + c (h(x_{t+1},y_{t+1}^\gamma)-v^h(x_{t+1})) + c' (g(x_{t+1},y_{t+1}^g)-v(x_{t+1}))\\
    \leq &  -\frac{\eta}{8} \|\nabla F_\gamma(x_t)\|^2 +c (h(x_t,y_{t}^\gamma)-v^h(x_t))  
    + c' l_{g,1}^2 (g(x_t,y_{t}^g-v(x_t)).  
\end{align*}
Denote $D_1 = F_\gamma(x_0)- F_\gamma(x_T)$, $D_2 = (h(x_0,y_0^\gamma)-v^h(x_0)) - (h(x_T,y_T^\gamma)-v^h(x_T))$, and $D_3 = (g(x_0,y_0^g)-v(x_0)) - (g(x_T,y_T^g)-v(x_T))$.  Rearranging and telescoping gives
\begin{align}
    \frac{1}{T}\sum_{t=0}^{T-1}\|\nabla F_\gamma(x_t)\|^2 \leq  \frac{8 \left(  D_1 + c D_2 + c' D_3\right)}{\eta T} = {\cal O}(\gamma^2 T^{-1})
\end{align}
where the last equality is because $c,c' = {\cal O}(\gamma)$, and $\eta= {\cal O}(\gamma^{-1})$. 
\end{proof}

The convergence of the fully-single-loop F$^2$SA without momentum is hindered by a larger $\gamma$, which regulates the UL violation rate. While the general case requires $\gamma = {\cal O}(\epsilon^{-0.5})$ as per Lemma \ref{lemma: distance of yg ygam}. This shows that the fully-single-loop version of F$^2$SA, though computationally efficient, suffers from higher international cost. 

\section{Additional Experimental Details}
\label{supp:add_exp}

\subsection{Additional details for toy example in Figure \ref{fig:toy_example_convergence_results}}

\label{app:toy_example}

In this section, we provide details for the toy example of PEFT BLO problem in Figure \ref{fig:toy_example_convergence_results}. We consider a binary classification setting where the model parameters $\theta = (x, y)$ consist of the UL variable $x$ and LL variable $y$, with $\theta \in \mathbb{R}^2$.
The model implements a 1D convolutional network with softmax activation:
\begin{verbatim}
class SoftmaxNN(nn.Module):
    def __init__(self):
        super().__init__()
        self.hidden = nn.Conv1d(in_channels=1, out_channels=1, 
                      kernel_size=2, stride=2, bias=False)
        self.activation = nn.Softmax(dim=1)
        self._init_weight() 
\end{verbatim}
 We specify the SFT datasets $\mathcal{D}_{\text{SFT}} = \{(X_1, y)\}$, and the DPO dataset $\mathcal{D}_{\text{DPO}} = \{(X_2, y_w, y_\ell)\}$ in Table \ref{tab: Toy example dataset}.
The BLO problem is specified in \eqref{eq: bilevel repre LLM}, where $f_{\text{DPO}}$ consists of a DPO loss with $\beta=1$~\cite{rafailov2023direct} plus an $\ell_2$ regularization term (weight 0.01) and $g_{\text{SFT}}$ consists of a negative log-likelihood loss and the same regulariztion. The reference model is obtained via learning on $g_{\text{SFT}}(x,y)$ (parameterized with $(x=-5.34 ,y=-9.94)$). 

We apply our PBGD-Free algorithm in Algorithm \ref{alg: PBGD-Free} in comparison with F$^2$SA \citep{kwon2023penalty} with $\gamma = 15$, $K=10$ inner loop to solve \eqref{eq: nabla F gam}, and $T=5000$ outer loop for both algorithms.

\begin{table}[htbp]
    \centering
    \caption{Dataset specification for toy illustration}
    \label{tab: Toy example dataset}
    \begin{tabular}{c|c|c}
    \hline
    Input & Output & Feature \\ \hline
    $X_1$ & $y = 0$ & $[1.0, 1.0, 0.5, 0.5]^\top$ \\
    $X_1$ & $y' = 1$ & $[1.0, 0.5, 0, 0.5]^\top$ \\
    $X_2$ & $y_w = 1$ & $[1.0, 0.5, 0.5, 0.5]^\top$ \\
    $X_2$ & $y_\ell = 0$ & $[0.5, 1.0, 1.0, 1.0]^\top$ \\
    \end{tabular}
\end{table}

\subsection{Representation learning problem on NLSY dataset \texorpdfstring{\cite{rothstein2019cohort}}{}}


BLO has proven effective in representation learning for obtaining a joint backbone model $x$ that captures unified task features and generalizes well to downstream tasks by only tuning the head $y$ \cite{arora2020provable,xu2021deep,stock2003retrospectives,hu2023contextual,shui2022fair}.
We test our algorithm on a representation learning problem on the National Longitudinal Survey of Youth (NLSY) dataset~\cite{rothstein2019cohort}, following the experimental setup in \cite{shui2022fair}. This problem aims to learn representations to predict normalized income via $\min_{x,y}f_{\text{MSE}}(x,y;D_1)$ s.t. $y\in \arg\min_{y}f_{\text{MSE}}(x,y;D_2)$, where $D_1,D_2$ are datasets partitioned by gender. The representation model, parameterized $x$, consists of two fully connected layers (hidden size 200, ReLU activation), and the predictor, parameterized $y$, is a linear classification head.

We compare our fully-single-loop PBGD-Free (with inner iteration $K=1$) against F$^2$SA \citep{kwon2023penalty} (with $K=2$) and the ITD algorithm from~\cite{shui2022fair}, following the experimental setup in~\cite{shui2022fair}. As shown in Table~\ref{tab: NLSY Training Results}, 
the performance gap is particularly notable in efficiency, where PBGD-Free is over twice as fast as F$^2$SA \citep{kwon2023penalty} and more than $30$ times faster than the ITD-based approach~\cite{shui2022fair}, primarily because it omits the value-function part and the inner loop of $y_g^*(x)$. Moreover, PBGD-Free achieves lower MSE than PBGD \citep{kwon2023fully}. This improvement stems from PBGD-Free's ability to avoid the bias $\gamma$-propagation inherent in PBGD's design. When both algorithms are single-loop (or nearly single-loop for small $K=2$), PBGD's reliance on a fixed penalty parameter $\gamma$ amplifies initial inner update biases throughout training, slowing convergence, detailed in Appendix \ref{appendix: fully single loop of PBGD} while PBGD-Free eliminates these $\gamma$-dependent value function terms.

\begin{table}[t]
    \centering
    \begin{tabular}{c||c|c}
    \hline\hline
    Methods & MSE & Time (s) \\
    \hline\hline
    \centering {V-PBGD \cite{kwon2023penalty}} & $1.9331 \pm 0.0794$ & $12.33 \pm 0.34$ \\
    \hline
    \centering {Implicit \citep{shui2022fair}} & $2.1530 \pm 0.0455$ & $169.69 \pm 0.36$ \\
    \hline
    \centering {\textbf{PBGD-Free}} & $\bf{1.8916 \pm 0.1245}$ & $\bf{5.15 \pm 0.06}$ \\
    \hline\hline
    \end{tabular}
    \vspace{0.2cm}
    \captionof{table}{Performance results for different training methods on representation learning problem on NLSY-7k Dataset \cite{rothstein2019cohort}. The mean $\pm$ standard deviation is reported for both the mean MSE and the mean time over 5 random experiments on the test dataset.} \label{tab: NLSY Training Results}
\end{table}

\subsection{\texorpdfstring{LLM PEFT problem \eqref{eq: bilevel repre LLM}}{LLM PEFT problem}} 
\label{exp:detail_peft_repre}

\paragraph{General Setup.} We evaluate our PEFT framework \eqref{eq: bilevel repre LLM} using the \texttt{Dahoas/rm-hh-rlhf} dataset for DPO loss and the \texttt{OpenOrca} dataset for SFT loss. For training, we test one \textsc{Pythia}-1b \citep{biderman2023pythia} model with $1800$ samples for each dataset (batch size $16$) and the \textsc{Llama-3-3b} \citep{fang2024llama} model with $4800$ samples (batch size $32$).
Both models are adapted with LoRA (\textsc{alpha} $16$, \textsc{rank} $16$) and we treat LoRA PEFT weights on the attention layers as $x$, the last layer linear head as $y$. The learning rate is set to $1\times 10^{-5}$, using Adam \citep{kingma2014adam} as the optimizer. All experiments were conducted on a cluster of NVIDIA A100 GPUs, each with 40 GB of memory. Training was performed using PyTorch with the DeepSpeed library \url{https://github.com/deepspeedai/DeepSpeed} to optimize memory usage and distributed training efficiency. We consider a time-limited experiment under a consistent computational budget, reflecting real-world constraints where training time is often a critical factor.

\paragraph{Algorithm hyperparameter.} We use a penalty constant of $\gamma=10$ for our proposed PBGD-Free algorithm (Algorithm \ref{alg: PBGD-Free}) with a single inner loop ($K=1$). For the baseline F$^2$SA algorithm \citep{chen2024finding,kwon2023penalty}, we set $\gamma=10$ with $K=3$ inner updates for training \textsc{Llama-3-3b} \citep{grattafiori2024llama}, and $K=5$ for \textsc{Pythia}-1b \citep{biderman2023pythia}. 
For the BOME algorithm, we similarly use $K=3$ and $K=5$ inner loops, adopting its hyperparameter $\eta=0.5$ for calculating the penalty constant, as suggested in \citep{ye2022bome}. For the ALRIGHT algorithm \citep{fernando2024mitigating}, we use its default setting of $\lambda = 0.5$ as suggested in literature \citep{fernando2024mitigating}. Since the ALRIGHT algorithm in \citep{fernando2024mitigating} is a bi-objective learning algorithm that does not have the representation learning capability, we examine it on an alternative formulation $\min_{x,y}[f_{\text{DPO}(x,y)},g_{\text{SFT}}(y)]$.

\textbf{Faster training than BLO baselines and more stable over bi-objective.} As presented in Figure \ref{fig: DPO_SFT_loss_time_1b}, when training \textsc{Pythia}-1b \citep{biderman2023pythia} on the PEFT problem \eqref{eq: bilevel repre LLM}, our PBGD-Free algorithm demonstrates the fastest convergence compared to the baseline BLO methods F$^2$SA \citep{chen2024finding,kwon2023penalty} and BOME \citep{ye2022bome}, both of which fail to converge within the given time budget. Additionally, PBGD-Free shows greater stability compared to its bi-objective counterpart ALRIGHT \citep{fernando2024mitigating}. 

\textbf{Better transferability to new task.} Figure \ref{fig:Post_SFT_tunin_DPO_SFT_loss_time_1b} further illustrates the performance of the outputs from BLO PEFT learning \eqref{eq: bilevel repre LLM} in the subsequent post-SFT-tuning phase \textbf{S2)}. The BLO baselines (F$^2$SA \citep{chen2024finding,kwon2023penalty} and BOME \citep{ye2022bome}), which did not achieve convergence in the initial PEFT phase due to their higher time complexity, tend to sacrifice DPO performance when improving SFT performance during post-SFT tuning \textbf{S2)}. 
In contrast, PBGD-Free algorithm and its bi-objective counterpart ALRIGHT \citep{fernando2024mitigating} demonstrate the ability to preserve strong preference alignment (DPO) while conducting SFT training in \textbf{S2)}. This shows that the preference backbone $x$ learned by both of them can be adapted to new task by fine-tuning only the linear head to achieve strong SFT performance. 
Notably, the BLO PEFT outputs trained by PBGD-Free achieves better SFT performance with substantially lower SFT loss, highlighting the advantage of the prioritization of SFT in our BLO formulation \eqref{eq: bilevel repre LLM}. This structure allows for a more powerful SFT tuning head, whereas bi-objective training methods tend to oscillate between potentially conflicting objectives, thereby limiting their post-SFT performance. 

\textbf{Better SFT performance while maintaining preference learning.} As illustrated in Table \ref{tab:sft-eval-examples}, the outputs generated by PBGD-Free demonstrate more precise and semantically accurate extraction, highlighting its superior SFT performance. In Figure \ref{fig:Post_SFT_tunin_DPO_SFT_loss_time_3b}, we present the loss metrics performance throughout post-SFT-tuning phase for \textsc{Llama-3-3b} \citep{grattafiori2024llama} in addition to the results presented in Section \ref{sec: exp}. We observe that our backbone $x$ trained on BLO PEFT \eqref{eq: bilevel repre LLM} via PBGD-Free retains its lowest DPO rates throughout post-SFT-tuning.
Figure \ref{fig: semantic_DPO} shows the quantitative results of the preference alignment using average reward gap and win rate. Together with Figure \ref{fig: bleu}, they indicate that the backbone preference model $x$ by the PBGD-Free maintains the first-tier DPO performance for preference alignment while enhancing the SFT performance by only fine-tuning the linear head. The slight DPO drop in Figure \ref{fig: semantic_DPO} of PBGD-Free compared with ALRIGHT is because it prioritizes better SFT performance, which restricts the feasible search space of representation model optimizing at the UL. However, since the representation evaluation criterion prioritizes strong SFT performance achieved by fine‑tuning only the linear head, and treats preference alignment as a secondary goal, PBGD‑Free remains the top‑performing method. Moreover, according to Figure \ref{fig:DPO_SFT_loss_time}, PBGD-Free is more stable during the training compared with ALRIGHT. Additionally, Table \ref{tab:sft-eval-examples} provides the SFT output comparison given by PBGD-Free and F$^2$SA on \textsc{Pythia}-1b \citep{biderman2023pythia}, \textsc{Llama-3-3b} \citep{grattafiori2024llama}, from which we can see that both methods improve the response quality over the pre-trained model through BLO PEFT \eqref{eq: bilevel repre LLM}, while PBGD-Free generates better responses and follows the human instructions well.


\textbf{Higher-rank LoRA enables finding better preference backbone via PBGD-Free. } The last $2$ columns in Figure \ref{fig: ablation_study} show that a higher-rank LoRA better preserves DPO with comparable SFT performance. It is likely because a higher-rank LoRA provides more over-parameterization, which ensures a more benign optimization landscape for the representation parameter $x$ \citep{yaras2024compressible,ward2023convergence,liuoptimization} and thus enables globally finding a better representation model $x$ \citep{xiao2024unlocking}.

\begin{table}[t]
    
    \begin{minipage}{0.96\linewidth}
    \renewcommand{\arraystretch}{1.6} 
    \begin{tabular}{p{0.31\linewidth} p{0.31\linewidth} p{0.31\linewidth}}
    \hline\hline
    \multicolumn{3}{c}{\textbf{Example of SFT Evaluation Performance}} \\
    \hline\hline
    \multicolumn{3}{p{\linewidth}}{\textbf{Human}: Generate an approximately fifteen-word sentence that describes all this data: Midsummer House eatType restaurant; Midsummer House food Chinese; Midsummer House priceRange moderate; Midsummer House customer rating 3 out of 5; Midsummer House near All Bar One} \\
    \textbf{\textsc{Pythia}-1b \citep{biderman2023pythia}}:\newline
    Midsummer House \textcolor{red}{staff} a restaurant \textcolor{red}{priced restaurant} restaurant. a \textcolor{red}{good-5 star} rating. and \textcolor{red}{in} All Bar One.
    & \textbf{BLO-PEFT (F$^2$SA \citep{kwon2023penalty})}:\newline
    Midsummer House is a \textcolor{red}{restaurant priced restaurant} restaurant with a \textcolor{orange}{3-5 customer rating}. located \textcolor{green}{near} All Bar One.  
    & \textbf{BLO-PEFT (PBGD-Free)}:\newline
    Midsummer House is a \textcolor{green}{moderately priced} \textcolor{green}{Chinese restaurant} with a \textcolor{green}{3/5 customer rating}. located \textcolor{green}{near} All Bar One. 
    \\
    \hline
    \multicolumn{3}{p{\linewidth}}{\textbf{Human}: You will be given a definition of a task first, then some input of the task. This task is about using the specified sentence and converting the sentence to Resource Description Framework (RDF) triplets of the form (subject, predicate, object). 
    The RDF triplets generated must be such that the triplets accurately capture the structure and semantics of the input sentence. The input is a sentence and the output is a list of triplets of the form [subject, predicate, object] that capture the relationships present in the sentence. When a sentence has more than 1 RDF triplet possible, the output must contain all of them. AFC Ajax (amateurs)'s ground is Sportpark De Toekomst where Ajax Youth Academy also play.} \\
    \textbf{\textsc{Llama-3-3b} \citep{grattafiori2024llama}}:\newline
    [["\textcolor{red}{AjaxFC Ajax (amateurs)}", \textcolor{red}{"playsGround"}, "Sportpark De Toekomst"], 
    ["Ajax Youth Academy",  \textcolor{orange}{"has at"}, "Sportpark De Toekomst"]] 
    & \textbf{BLO-PEFT (F$^2$SA \citep{kwon2023penalty})}:\newline
    [["\textcolor{red}{AjaxFC Ajax (amateurs)}", \textcolor{orange}{"plays ground"}, "Sportpark De Toekomst"], 
    ["Ajax Youth Academy", \textcolor{orange}{"has at"}, "Sportpark De Toekomst"]] 
    & \textbf{BLO-PEFT (PBGD-Free)}:\newline
    [["\textcolor{green}{AFC Ajax (amateurs)}", \textcolor{green}{"has ground"}, "Sportpark De Toekomst"], 
    ["Ajax Youth Academy", \textcolor{green}{"plays at"}, "Sportpark De Toekomst"]] 
    \\
    \hline
    \hline
    \end{tabular}
    \vspace{0.1cm}
    \end{minipage}
    \caption{Examples of SFT evaluation performance for \textsc{Pythia}-1b \citep{biderman2023pythia}, \textsc{Llama-3-3b} \citep{grattafiori2024llama} and their corresponding BLO-PEFT \eqref{eq: bilevel repre LLM} results via our PBGD-Free Algorithm \ref{alg: PBGD-Free} and basline F$^2$SA \citep{kwon2023penalty}. Text marked in \textcolor{red}{red} indicates incorrect outputs, \textcolor{orange}{orange} indicates partially correct outputs that follow some of the instructions, and \textcolor{green}{green} indicates fully correct outputs that match the expected instructions.}
    \label{tab:sft-eval-examples}
    \vspace{-0.6cm}
\end{table}

\begin{figure}[t]
    \centering
    \small
    \begin{minipage}{0.52\linewidth}
        \centering
        \vspace{-0.4cm}
        \centering
        \includegraphics[width=0.8\textwidth]{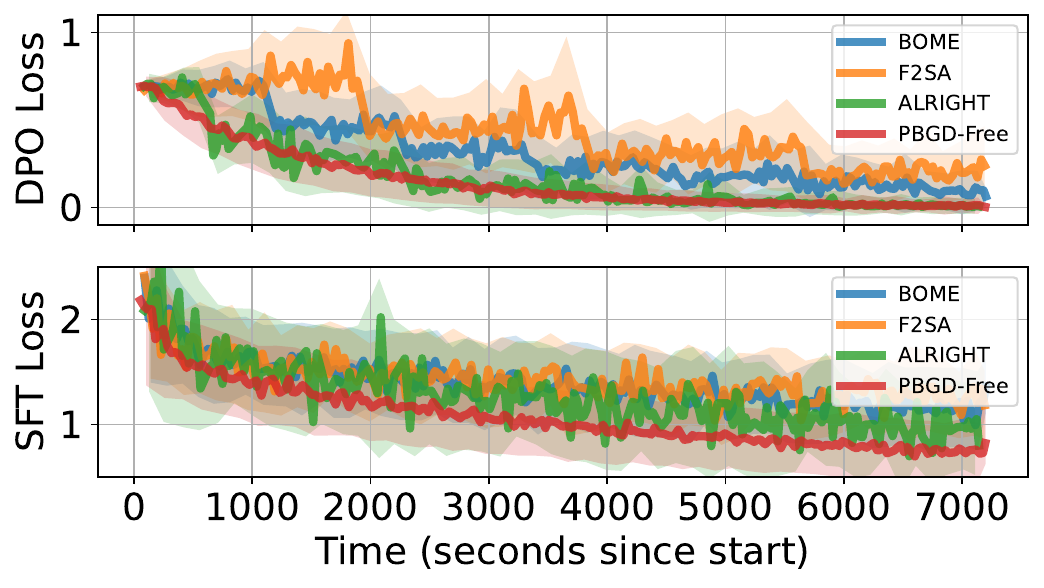}  
        \vspace{-0.2cm}
        \caption{Train losses vs. time with \textsc{stride = 50} for different algorithms in solving \eqref{eq: bilevel repre LLM} (or biobjective learning for ALRIGHT \cite{fernando2024mitigating}) on \textsc{Pythia}-1b \citep{biderman2023pythia}.}
        \label{fig: DPO_SFT_loss_time_1b}
    \end{minipage}
    \hfill
    \begin{minipage}{0.44\linewidth}
        \centering
        \vspace{-0.4cm}
        \includegraphics[width=0.98\linewidth]{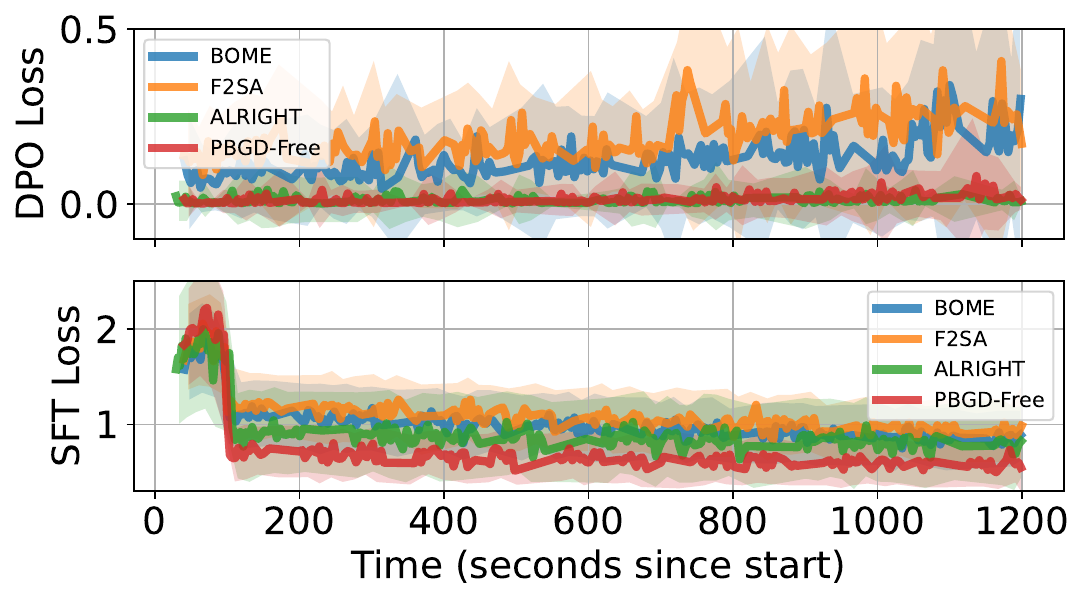}
        \vspace{-.2cm}
        \caption{Train losses vs. time with \textsc{stride = 50} for different algorithms in Post SFT-tuning phase on \textsc{Pythia}-1b \citep{biderman2023pythia}.}
        \label{fig:Post_SFT_tunin_DPO_SFT_loss_time_1b}
        \vspace{-0.3cm}
    \end{minipage}
\end{figure}

\begin{figure}[t]
    \centering
    \small
    \begin{minipage}{0.52\linewidth}
        \centering
        \vspace{-0.4cm}
        \centering
        \includegraphics[width=0.8\textwidth]{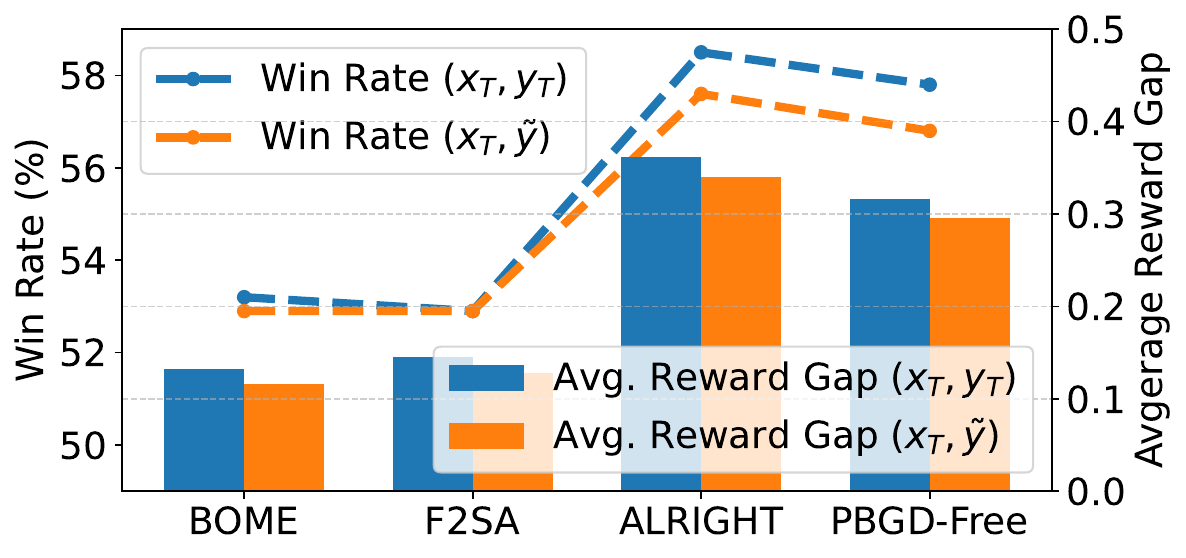}  
        \vspace{-0.2cm}
        \caption{Average Reward Gap ($\uparrow$) and Win Rate ($\uparrow$) for different algorithms for PEFT \textsc{Llamma-3-3b} on \citep{grattafiori2024llama} with the output $(x_T,y_T)$ via each method in \textbf{S1)} and the outcome $(x_T,\tilde{y})$ from post-SFT-tuning on another dataset with fixed-backbone in \textbf{S2)}. }
        \label{fig: semantic_DPO}
    \end{minipage}
    \hfill
    \begin{minipage}{0.44\linewidth}
        \centering
        \vspace{-0.4cm}
        \includegraphics[width=0.98\linewidth]{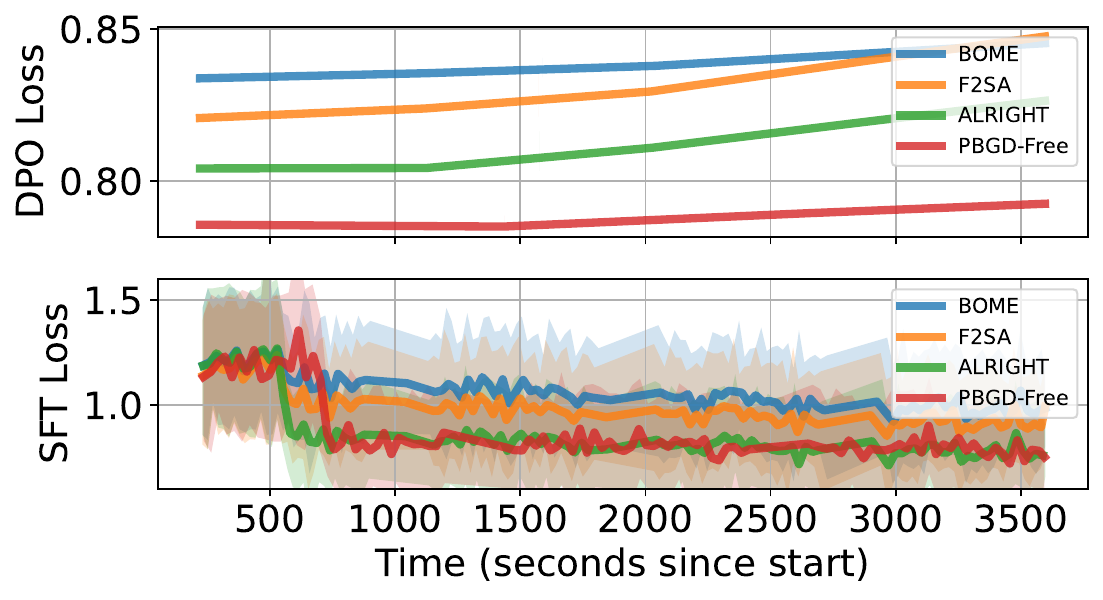}
        \vspace{-.2cm}
        \caption{Evaluation of DPO losses and train SFT loss vs. time with \textsc{stride = 50} for different algorithms in Post SFT-tuning phase on for PEFT \textsc{Llamma-3-3b} \citep{grattafiori2024llama}.}
        \label{fig:Post_SFT_tunin_DPO_SFT_loss_time_3b}
    \end{minipage}
    \vspace{-0.1cm}
\end{figure}

\subsection{BiDoRa fine-tuning problem}
\label{sec: BiDoRa exp}

One of the recent applications of bilevel optimization lies in the field of Large Language Finetuning. 
\citep{qin2024bidora} proposed BiDoRa, which considers fine-tuning using DoRa \citep{liu2024dora} by training on a BLO problem
    \begin{align}\label{eq: BiDoRa}
    \min_{m}~&l_{\rm tr}^{l}(m,v^*(m))\quad\text{s.t} \quad v^*(m)=l_{\rm tr}^{s}(m,v^*(m)+ \rho R(v)
\end{align}
where $m$ is the magnitude and $v$ is the direction matrix for the low-rank incremental direction,
$l_{\rm tr}^{l}(m,v)$ and $l_{\rm tr}^{l}(m,v)$ are respectively loss functions for fine-tuning training dataset splitting into large and small on proportion 0.66 to 0.67, and $R(v)$ is the Gram regularization loss \citep{xie2017all} with constant $\rho$ taken as $1e^{-3}$. 

We conduct experiments on Microsoft Research Paraphrase Corpus (MRPC) dataset \citep{dolan2005automatically}, and Internet Movie Database (IMDb) in Hugging Face by fine-tuning Bert model \citep{maas2011learning}. We apply fully-single-loop versions of PBGD and PBGD-Free in Algorithm \ref{alg: PBGD-Free} to solve the problem in Section \ref{eq: BiDoRa} and compare it with training using DARTS \citep{liu2018darts}, the algorithm used in the original BiDoRa algorithm \citep{qin2024bidora}, and the naive results trained on $\min_{m,v}l_{tr}(m,v)$ where $l_{tr}$ is the combined loss for training dataset including the ones used for both $l_{tr}^l$ and $l_{tr}^s$ for DoRa~\citep{liu2024dora}. The experiment is conducted on a single NVIDIA RTX A5000 GPU (24GB) using CUDA 12.2 and NVIDIA driver version 535.113.01.

As illustrated in Table \ref{tab: BiDoRa output}, training BiDoRa using PBGD-Free in Algorithm \ref{alg: PBGD-Free} achieves the best performance in terms of test accuracy. It is more efficient than training using PBGD as it cuts the inner loop. Notably, it performs even better than the fully-single-loop of F$^2$SA \citep{chen2024finding}. This is consistent with the convergence results in Proposition \ref{thm: ALT-PBGD} and Theorem \ref{thm: no value algorithm convergence}. 


\begin{table}[tbh]
\centering
\begin{tabular}{c||c|c}
\hline\hline
Methods  & \textbf{MRPC} & \textbf{IMDb}  \\
\hline\hline
\centering {\textbf{BiDoRa-PBGD-Free}} & $\bf 0.839 \pm 0.006$ & $\bf 0.873 \pm 0.007$ \\
\hline
\centering {BiDoRa-F$^2$SA} & $0.820 \pm 0.014$ & $0.866 \pm 0.016 $ \\
\hline
\centering {BiDoRa-DARTS} & / & / \\
\hline
\centering {DoRa} &  $0.832 \pm 0.010$ & $0.872 \pm 0.010$\\
\hline\hline
\end{tabular}
\captionof{table}{Test accuracy (\%) on training the finetuning parameters using BiDoRa-PBGD in comparison with DARTS \citep{liu2018darts}, the algorithm used in \citep{qin2024bidora}, and with directly training DoRa \citep{liu2024dora}. It represents the accuracy mean $\pm$ standard deviation on 20 random training experiments. The "/" represents didn't converge in 10 times the time used in training DoRa.}  \label{tab: BiDoRa output}
\vspace{-0.5cm}
\end{table}


\end{document}